\newtheorem{thm}{Theorem}[section]
\newtheorem{cor}[thm]{Corollary}
\newtheorem{lem}[thm]{Lemma}
\theoremstyle{definition}
\newtheorem{defn}[thm]{Definition}
\theoremstyle{remark}
\newtheorem{rem}[thm]{Remark}
\numberwithin{equation}{section}
\def\d{\Delta}
\def\L2{L^{2}}
\def\M{\mathcal{M}}
\def\E{\mathcal{E}}
\def\B{\mathcal{B}}
\def\D{\mathcal{D}}
\def\L{\mathcal{L}}
\def\R{\mathbb{R}}
\def\C{\mathbb{C}}
\def\Z{\mathbb{Z}}
\def\m1{^{-1}}
\def\C{\mathcal{C}}
\def\H{\mathcal{H}}
\def\F{\mathcal{F}}
\def\J{\mathcal{J}}
\def\K{\mathcal{K}}
\def\o{\omega}
\def\O{\Omega}
\def\P{\mathcal{P}}
\begin{document}

\title[]{The emergence of \\Noncommutative Potential Theory.}%
\author{Fabio Cipriani}%
\address{Dipartimento di Matematica, Politecnico di Milano, piazza Leonardo da Vinci 32, 20133 Milano, Italy.} \email{fabio.cipriani@polimi.it}
\footnote{This work has been supported by Laboratoire Ypatia des Sciences Mathématiques C.N.R.S. France - Laboratorio Ypatia delle Scienze Matematiche I.N.D.A.M. Italy (LYSM).}
\subjclass{46L89 (31C25 46L10 60J45 81R15)}
\keywords{Noncommutative Dirichlet form, Noncommutative Potential Theory.}%
\date{September 19, 2020}
\maketitle
\begin{abstract}
We review origins and developments of Noncommutative Potential Theory as underpinned by the notion of energy form. Recent and new applications are shown to approximation properties of von Neumann algebras.
\end{abstract}

\tableofcontents
\section{Introduction.}
Our intent here is to trace some of the main steps of Noncommutative Potential theory, starting from the seminal works by Sergio Albeverio and Raphael Hoegh-Khron [AHK1,2]. The point of view adopted in treating Potential Theory it is essentially the one of Dirichlet forms, i.e. the point of view of Energy. The justification for this is that, not only the motivating situations to develop a potential theory on operator algebras came from Mathematical Physics but also that the concept of Energy seems to have a unifying character with respect to the different aspects of the subject.\\
The present exposition is thought to be addressed to researcher not necessarily familiar with the tools of operator algebras and, in this respect, we privileged the illustration of examples and applications instead to provide the details of the proofs.\\
In this presentation several aspects of the theory has been necessarily sacrificed and for them we refer to other presentations [Cip4], [Cip5]. In particular, the construction of Fredholm modules and Dirac operators from Dirichlet forms and the realization of Dirichlet spaces as istances of A. Connes' Noncommutative Geometry [Co5] can be found in [Cip4],[CS3] and [S4,7] while the study of energy states, potentials and multipliers of noncommutative Dirichlet spaces has been initiated in [CS4]. The details of the theory on KMS symmetric Markovian semigroups on C$^*$-algebras can be found in [Cip5].\\
The recent developments of the theory of noncommutative Dirichlet forms show a not rare situation in Mathematics in which a theory born to solve specific problems, as time goes by, applies to, apparently far away, others. In this respect we review in Section 7 the recent close relationships among spectral characteristics of noncommutative Dirichlet forms and approximation properties of von Neumann algebras such as Haagerup Property (H), amenability and Property (T). In particular a new characterization of the Murray-von Neumann Property ($\Gamma$) is proved in terms of absence of a Poincar\'e inequalitiy for elementary Dirichlet forms.\\

\section{Commutative Potential theory}
\subsection{Classical Potential Theory}
Classical Potential Theory concerns properties of the Dirichlet integral
\[
\D:L^2 (\mathbb{R}^d,m)\to [0,+\infty]\qquad \D[u]:=\int_{\mathbb{R}^d} |\nabla u|^2\,dm
\]
as a lower semicontinuous quadratic form on the Hilbert space $L^2 (\mathbb{R}^d,m)$, which is finite on the Sobolev space $H^1 (\mathbb{R}^d)$. The associated positive, self-adjoint operator is the Laplace operator
\[
\Delta=-\sum_{k=1}^d\partial_k^2\qquad\D[u]=\|{\sqrt \Delta}u\|^2_2
\]
which generates the heat semigroup $e^{-t\Delta}$ whose Gaussian kernel
\[
e^{-t\Delta}(x,y)=(4\pi t)^{-d/2}e^{-\frac{|x-y|^2}{4t}}
\]
is the fundamental solution of the heat equation $\partial_t u +\Delta u=0$. The Brownian motion $(\Omega, P_x, B_t)$ is the stochastic processes associated to the semigroup by the relation
\[
(e^{-t\Delta}u)(x)=\mathbb{E}_x(u\circ B_t)
\]
which is also directly connected to the Dirichlet integral by the identity
\[
\D[u]=\lim_{t\to 0^+}\frac{\mathbb{E}_m(|u\circ B_t-u\circ B_0|^2)}{2t}\, .
\]
The polar sets, i.e. those sets which are avoided by the Brownian motion, can be characterized as those which have vanishing electrostatic capacity, defined in terms of the Dirichlet integral itself as
\[
{\rm Cap}(A):=\inf\{\D[u]+\|u\|^2_2:u\in H^1(\R^n),\,\,\, 1_A\le u\}
\]
for any open set $A\subseteq\R^n$ and then as
\[
{\rm Cap}(B):=\inf\{{\rm Cap}(A): B\subseteq A,\,\,\, A\,\,\,{\rm open}\}
\]
for any other measurable set $B\subseteq\R^n$.
The heat semigroup is Markovian on $L^2(\R^n,m)$ in the sense that it is strongly continuous, contractive, positivity preserving and satisfies $e^{-t\Delta}u\le1$ whenever $u$ is a real function such that $u\le 1$. By these properties it can be extended to a contractive and positivity preserving semigroup on any $L^p(\R^n,m)$ for $p\in [1,+\infty]$ which is strongly continuous for $p\in [1,+\infty)$ and weakly$^*$-continuous for $p=+\infty$. The Markovianity of the heat semigroup is equivalent to the following property, also called Markovianity, of the Dirichlet integral
\[
\D[u\wedge 1]\le \D[u]\qquad u={\bar u}\in L^2(\R^n,m)
\]
which can be easily checked using differential calculus and the definition of the Dirichlet integral. All others above properties can be proved by the explicit knowledge of the Green kernel of heat semigroup which, for $d\ge 3$ at least, equals
\[
\Delta^{-1} u(x)=\int_{\mathbb{R}^d}G(x,y)u(y)\, m(dy)\qquad G(x,y)=c_d\cdot |x-y|^{2-d}\, .
\]

\subsection{Beurling-Deny Potential Theory} ([BD2], [Den], [CF], [FOT], [LJ], [Sil1], [Sil2]).
A turning point in the development of potential theory was represented by two seminal papers by A. Beurling and J. Deny [BD1,2]. They developed a {\it kernel free potential theory} based on the notion of {\it energy} on general locally compact measured spaces $(X,m)$. The whole theory relies on the notion of {\it regular Dirichlet form} which is required to be a lower semicontinuous quadratic functional on $L^2(X,m)$ satisfying
\begin{itemize}
\item {\it Markovianity}
\[
\E:L^2(X,m)\to [0,+\infty]\qquad \E[u\wedge 1]\le\E[u]
\]
\item {\it regularity}: $\F\cap C_0(X)$ is a form core uniformly dense in $C_0(X)$
\end{itemize}
and where  the form domain $\F:=\{u\in L^2(X,m): \E[u]<+\infty\}$ is assumed to be $L^2$-dense. The lower semicontinuity of $\E$ on $L^2(X,m)$, being equivalent to the closedness of the densely defined quadratic form $(\E,\F)$ on $L^2(X,m)$, implies the existence of a  nonnegative, self-adjoint operator $(L,D(L))$ which generates a Markovian semigroup $e^{-tL}$ on $L^2(X,m)$.
\subsubsection{Beurling Deny decomposition}
One of the first fundamental results in the Beurling-Deny analysis concerns the structure of a general regular Dirichlet form: these can be uniquely realized as a sum of three Markovian forms (each of which not necessarily closed)
\[
\E=\E^d+\E^j+\E^k
\]
where the {\it jumping part} has the form
\[
\E^j[u]=\int_{X\times X\setminus\Delta_X} |u(x)-u(y)|^2\, J(dx,dy)
\]
for a positive measure $J$ supported off the diagonal $\Delta_X$ of $X\times X$, the {\it killing part} appears as
\[
\E^k[u]=\int_X |u(x)|^2\, k(dx)
\]
for some positive measure $k$ on $X$ and the {\it diffusion part is strongly local} in the sense that
\[
\E^d[u+v]=\E^d[u]+\E^d[v]
\]
whenever $u$ is constant in a neighborhood of the support of $v$.
\vskip0.2truecm
Two turning point in the development of Potential Theory took place on the probabilistic side when M. Fukushima associated a Hunt stochastic process $(\O,P_x, X_t)$ to a regular Dirichlet form in such a way that
\[
\E[u]=\lim_{t\to 0^+}\frac{\mathbb{E}(|u\circ X_t-u\circ X_0|^2)}{2t}\qquad u\in\F
\]
and when M. Silverstein introduced the notion of {\it extended Dirichlet space}, especially for the connections with the boundary theory and the random time change of symmetric Hunt processes.
\vskip0.2truecm\noindent
The process is a stochastic dynamical system which represents the semigroup through
\[
(e^{-tL}u)(x)=\mathbb{E}_x(u\circ X_t)\, .
\]
A basic tool in the development of the Beurling-Deny theory of a regular Dirichlet form $(\E,\F)$ is the capacity one associates to it exactly in the same way we have seen above in the case of Dirichlet integrals. A key point to construct the associated stochastic processes is the fact that the regularity property of the Dirichlet form allows to prove that the capacity associated to it is in fact a {\it Choquet capacity} which implies that Borel sets are capacitable.\\
From the point of view of the process, the three different summands of the Beurling-Deny decomposition have a nice and useful probabilistic interpretation: the measure $J$ counts the jumps of the process, the measure $k$ specifies the rate at which the process is killed inside $X$ and a Dirichlet form is strongly local if and only if the associated process is  a diffusion, i.e. it has continuous sample paths. In Section 5.2 we will show an independent, algebraic way to prove the above decomposition of Dirichlet forms.


\section{Operator algebras}

\subsection{C$^*$-algebras as noncommutative topology} ([Arv], [Dix1], [Ped], [T2]).\\
 A C$^*$-algebra is $A$ is a Banach $*$-algebra in which norm and involution conspire as follows
\[
\|a^* a\|=\|a\|^2\qquad a\in A\, .
\]
This notion generalizes topology in an algebraic form in the sense that, by a theorem of I.M. Gelfand, a {\it commutative} C$^*$-algebra $A$ is isomorphic to the algebra $C_0(X)$ of continuous functions vanishing at infinity on a locally compact Hausdorff space $X$, called the {\it spectrum} of $A$. In $C_0(X)$ the product of functions is defined pointwise, the involution is given by pointwise complex conjugation and the norm is the uniform one.\\
The simplest example of a {\it noncommutative} C$^*$-algebra is the full matrix algebra $M_n(\mathbb{C})$ where the product is the usual rows-by-columns, the involution of a matrix $A$ is defined as its matrix adjoint $A^*$ and the norm $\|A\|$ is given by the operator norm (the largest singular value of $A$, i.e. the square root of the largest eigenvalue of $A^*A$).\\
{\it Finite dimensional} C$^*$-algebras are isomorphic to finite direct sums of full matrix algebras.
The simplest examples of noncommutative, {\it infinite dimensional} C$^*$-algebras are those of the algebra $B(h)$ of all bounded operators and its  subalgebra of all compact operators $\mathcal{K}(h)$ on an infinite dimensional Hilbert space $h$, the norm being the operator one.\\
A morphism $\alpha:A\to B$ between C$^*$-algebras $A,B$ is a norm continuous $*$-algebras morphism. A first example of the deep interplay that the algebraic and the analytic structures on a C$^*$-algebra give rise, is the fact that $^*$-algebra morphisms are automatically norm continuous. Morphisms between commutative C$^*$-algebras $C_0(X)$ and $C_0(Y)$ correspond to homeomorphisms $\phi:Y\to X$ by $\alpha(f)=f\circ\phi$.\\
A morphism of type $\pi:A\to B(h)$ is called a representation of $A$ on the Hilbert space $h$. It is called faithful if it is an injective map and in this case $A$ can be identified with the C$^*$-subalgebra $\pi(A)\subseteq B(h)$. Any C$^*$-algebra admits a faithful representation.\\
A C$^*$-algebras is, in particular, an ordered vector space where the closed cone is given by
\[
A_+:=\{a^* a\in A: a\in A\}.
\]
When $A$ is represented as a subalgebra of some $B(h)$, the positive elements of $A$ are positive, self-adjoint operators on $h$. In $C_0(X)$, the positive elements are just the nonnegative functions.
\subsection{von Neumann Algebras as noncommutative measure theory} ([Dix2], [MvN], [Ped]).
A von Neumann algebra $M$ is a C$^*$-algebra which admits a predual $M_*$ as a Banach space in the sense that $(M_*)^*=M$.\\
Any commutative, $\sigma$-finite\footnote{A von Neumann algebra is $\sigma$-finite if all collections of mutually disjoint orthogonal projections have at most a countable cardinality. von Neumann algebras acting on separable Hilbert spaces are $\sigma$-finite (the converse being in general not true).} von Neumann algebra is isomorphic to the algebra of (classes of) essentially bounded measurable functions $L^\infty (X,m)$ on a measured standard space $(X,m)$ with $L^1(X,m)$ as predual space. This commutative situation forces to regard the theory of von Neumann algebras as a noncommutative generalization of Lebesgue measure theory. Even if this is a fruitful point of view, other natural constructions suggest to look at the theory as a generalization of Euclidean Geometry and as a generalization of Harmonic Analysis.\\
The simplest example of a noncommutative von Neumann algebra is that of the space $B(h)$ of all bounded operators acting on a Hilbert space $h$ having dimension greater than one. The predual of $B(h)$ is given by the Banach space $L^1(h)$ of trace-class operators on $h$ and the duality is given by
\[
\langle A,B\rangle:={\rm Tr}(AB)\qquad A\in B(h),\quad B\in L^1(h)\, .
\]
All C$^*$-algebras are isomorphic to norm-closed subalgebras of some $B(h)$ and all von Neumann algebras are isomorphic to subalgebras of some $B(h)$, closed in its weak$^*$-topology. A first fundamental results of J. von Neumann asserts that for any subset $S\subseteq B(h)$, its {\it commutant}
\[
S':=\{a\in B(h): ab=ab,\,\,\,{\rm for\,\,all}\,\,\, b\in B(h)\}
\]
is a von Neumann algebra. A second fundamental result of J. von Neumann asserts that an involutive subalgebra $M\subseteq B(h)$ is a von Neumann algebras iff it is weakly$^*$-closed and iff it coincides with its double commutant $M=M'':=(M')'$. A key aspect is that for an involutive subalgebra $M\subseteq B(h)$ its weak$^*$-closure coincides with its double commutant $(M')'$.\\
The center of a von Neumann algebra is defined as
\[
{\rm Center}(M):=\{x\in M:xy=yx,\,\, y\in M\}.
\]
and $M$ is called a {\it factor} if its center reduces to the one dimensional algebra $\mathbb{C}\cdot 1_M$ of scalar multiples of the unit of $M$.

\subsection{Weights, traces, states and the GNS representation} ([Dix1], [Ped]).
A {\it positive functional} on a $C^*$-algebra $A$ is a linear map $\tau:A\to \mathbb{C}$ such that
\[
\tau(a)\ge 0\qquad a\in A_+.
\]
These are automatically bounded and are called {\it states} when having norm one. In case $A$ has a unit, a positive functional is a state as soon as $\tau(1_A)=1$ as it follows from $0\le a\le \|a\|_A\cdot1_A$. Positive functionals are noncommutative analog of finite, positive Borel measures on locally compact spaces: in fact, by the Riesz Representation Theorem, a positive functional on a commutative C$^*$-algebra $C_0(X)$ corresponds, via Lebesgue integration, to a finite, positive Borel measure $m$ on $X$
\[
\tau(a)=\int_X a\,dm\qquad a\in C_0(X),
\]
which is a probability if and only if $\tau$ is a state. To accommodate the analog of possibly unbounded positive Borel measures, one has to consider {\it weights} on $A$  defined as functions $\tau:A_+\to [0,+\infty]$ which are {\it homogeneous} and {\it additive} in the sense
\[
\tau(\lambda a)=\lambda\tau(a)\, ,\qquad \tau (a+b)=\tau(a)+\tau(b) \qquad a,b\in A_+,\quad \lambda\ge 0\, .
\]
If a weight is everywhere finite, then it can be extended to a positive linear functional on $A$. A weight is called a {\it trace} if it is invariant under inner automorphisms  in the sense
\[
\tau(uau^*)=\tau(a)\, ,\qquad a\in A_+
\]
for all unitaries $u\in \widetilde A=A\oplus\mathbb{C}$ (recall that $A$ is a two-sided ideal in $\widetilde A$). This is equivalent to require that $\tau$ is {\it central} in sense that
\[
\tau (a^*a)=\tau(aa^*)\qquad a\in A.
\]
If $\tau$ is finite this reduces to
\[
\tau(ab)=\tau(ba)\qquad a,b\in A.
\]
A weight is {\it faithful} if it vanishes $\tau(a)=0$ on $a\in A_+$ only when $a=0$. In the commutative case, faithful weights correspond to fully supported positive Borel measures. \\
A weight is {\it densely defined} if the ideal $A^\tau:=\{a\in A_+:\tau(a^*a)<+\infty\}$ is dense in $A$. If a trace is lower-semicontinuous, than it is {\it semifinite} in the sense that
\[
\tau(a)=\sup\{\tau(b)\in A_+:b\le a\}\qquad b\in A_+.
\]
On a von Neumann algebra, a weight is {\it normal} if
\[
\tau(\sup_{i\in I} a_i)=\sup_{i\in I}\tau(a_i)
\]
for any net $\{a_i:i\in I\}\subset A_+$ admitting a least upper bound in $A_+$. The predual Banach space $M_*$ of a von Neumann algebra $M$ can be shown to the space of all normal continuous functionals on $M$. A von Neumann algebra is said to be {\it finite} (resp. {\it semi-finite}) if, for every non-zero $a\in A_+$, there exists a finite (resp. semi-finite) normal trace $\tau$ such that $\tau(a)>0$ and it is said {\it properly infinite}
(resp. {\it purely infinite}) if the only finite (resp. semi-finite) normal trace on $A$ is zero. On a semi-finite von Neumann algebra, there exists a semi-finite faithful normal trace.
\vskip0.1truecm\noindent
In this exposition we will be essentially concerned with semi-finite von Neumann algebras and in particular with those which are  {\it $\sigma$-finite} in the sense that they admit a faithful, normal state. If $h$ is a separable Hilbert space, then any von Neumann algebra $A\subseteq B(h)$ is $\sigma$-finite. In fact, for any Hilbert base $\{e_k\in h: k\in\mathbb{N}\}$, a faithful, normal state is provided by
\[
\tau(x):=\sum_{k\in\mathbb{N}}(e_k|xe_k)_h\qquad x\in A.
\]
In a way similar to the one by which a probability measure $m$ on $X$ give rise to the Hilbert space $L^2(X,m)$ and to the representation of continuous functions in $C_0(X)$ as multiplication operators on it, a densely defined weight $\tau$ on a C$^*$-algebra $A$ give rise to a Hilbert space $L^2(A,\tau)$ on which the elements $a\in A$ act as bounded operators. This is called the Gelfand-Neimark-Segal or GNS-representation of $A$ associated to $\tau$.\\ In fact, the sesquilinear form $x,y\mapsto \tau(x^*y)$ on the vector space $A$, satisfies the {\it Cauchy-Schwarz inequality}
\[
|\tau (x^*y)|^2\le\tau(x^* x)\tau (y^* y)\qquad x,y\in A^\tau
\]
and a Hilbert space $L^2(A,\phi)$ can be constructed from the inner product space $A^\tau$ by separation and completion. Since $A^\tau$ is an ideal of $A$, the left regular action $b\mapsto ab$ of $A$ onto itself give rise to an action of $A$ onto $A^\tau$ and then to a representation of $A$ on the GNS Hilbert space. If $\tau$ is faithful, the identity map of $A$ give rise to an injective, bounded map $A\to L^2(A,\tau)$ and if $A$ is unital, the vector $\xi_\phi\in L^2(A,\tau)$ image of the identity $1_A\in A$, allows to represent the state $\phi$ by $\tau (x)=(\xi_\tau |x\xi_\tau)_2$. This vector, uniquely determined by this property, is {\it cyclic} in the sense that $\overline{A\xi_\tau}=L^2(A,\tau)$ and {\it separating} in the sense that if $a\in A$ and $a\xi_\tau=0$ then $a=0$.\\
The von Neumann algebra $L^\infty(A,\tau):=(\pi_{\rm GNS}(A))''   \subseteq B(L^2(A,\tau))$ obtained by w$^*$-completion, is called the {\it von Neumann algebra generated by $\tau$ on $A$}. The GNS-representation can then be extended to a normal representation of $L^\infty(A,\tau)$. As notations are aimed to suggest, this is a generalization of the usual construction of Lebesgue-Riesz measure theory.
\vskip0.2truecm\noindent
In the case of the trace functional ${\rm Tr}$ on $\mathcal{K}(h)$, the associated GNS space is $L^2(\mathcal{K}(h),{\rm Tr})=L^2(h)$ the space of Hilbert-Schmidt operators on which compact operators in $\mathcal{K}(h)$ act by left composition.\\
The Hilbert space of the GNS representation of a faithful trace is naturally endowed with a closed convex cone $L^2_+(A,\tau)$, which provides an order structure on $L^2(A,\tau)$. It is defined as the closure of $A^\tau$. In the commutative case $A=C_0(X)$, this is just the cone of square integrable, positive functions. $L^2(X,m)$. The construction of a suitable closed, convex cone from a faithful state on a C$^*$-algebra or from a faithful normal state on a von Neumann algebra will be done later on.\\
We conclude this section mentioning that a noncommutative integration theory for traces on C$^*$-algebras has been developed in [Ne], [Se] giving rise to an interpolation scale of spaces $L^p(A,\tau)$ between the von Neumann algebra $L^\infty(A,\tau)$ and its predual $L^1(A,\tau)$. The elements of this spaces can realized as closed operators on $L^2(A,\tau)$.

\subsection{Morphisms of operator algebras} ([Ped]).The most obvious notion of morphism to form a category of C$^*$-algebras is certainly that of continuous morphisms of involutive algebras. However, this category risks to have a poor amount of morphisms. For example, if $\alpha:A\to B$ is a morphism and $B$ is commutative then
$\alpha (ab-ba)=\alpha (a)\alpha(b)-\alpha (b)\alpha(a)=0$ so that if the algebra generated by commutators $[a,b]:=ab-ba$ is dense in $A$ then $\alpha =0$. This is the case for example of $A=\mathcal{K}(h)$  or more generally for the so called {\it stable} C$^*$-algebras.\\
We illustrate now a much more well behaved notion of morphism between C$^*$ and von Neumann algebras, i.e. {\it completely positive map} (an even more general and fundamental notion of morphism is that of {\it Connes correspondence}, which we will meet later on in this lectures). This notion is of probabilistic nature in the sense that, among commutative von Neumann algebras, completely positive maps are just the transformations associated to {\it positive kernels}. Notice, {\it en passant}, that another basic tool in operator algebra theory which is of clearly probabilistic nature is the notion of {\it conditional expectation}. See discussion in [Co5 Chapter 5 Appendix B].
\vskip0,5truecm\noindent
Beside to any C$^*$-algebra $A$ we may consider its {\it matrix ampliations} $A\otimes \mathbb{M}_n(\mathbb{C})$, $n\ge 1$. A linear map $T:A\to B$ is said to be {\it completely positive}, or CP map, if its ampliations
\[
T\otimes I_n:A\otimes \mathbb{M}_n(\mathbb{C})\to B\otimes \mathbb{M}_n(\mathbb{C})
\]
are positive for any $n\ge 1$. $*$-algebra morphisms (such as representations) are completely positive maps. If $A$ or $B$ is commutative, all positive maps (in particular, states) are automatically completely positive. Complete positivity is however a much more demanding property than just positivity. While the general structure of positive maps is rather elusive, even in a finite dimensional setting, the structure of CP maps is completely described by the Stinespring Theorem [Sti]. We may consider, without loss of generality, the case of a CP map $T:A\to B(h)$. The result ensures the existence of a representation $\pi:A\to B(k)$ on a Hilbert space $k$ and that of a bounded operator $V:h\to k$ such that
\[
Ta=V^*\pi(a)V\qquad a\in A.
\]
In case $A$ is unital and $T1_A=1_A$, then $V^*V=I_h$ so that $V$ is an isometry which can be considered as an immersion of $h$ into $k$. $V^*$ is then the projection of $k$ onto $h$ and the CP map $T$ results as the {\it compression of the restriction of a representation}. The Steinespring construction can be considered as a generalization of the GNS representation. One starts endowing the vector space $A\otimes_{\rm alg}h$ by the sesquilinear form
\[
(a\otimes \xi|b\otimes \eta):=(\xi|T(a^*b)\eta)_h\qquad a,b\in A\, ,\quad \xi,\eta\in h
\]
and checks that the CP property just ensures that this form is positive definite. Cutting out its kernel and completing the normed space obtained, one gets the Hilbert space $k$. The representation of $A$ on $k$ is an ampliation of the left regular representation of $A$ as it is induced by the map $a(b\otimes \xi)\mapsto ab\otimes\xi$.\\
A positivity preserving map $\phi:M\to N$ between von Neumann algebras, is {\it normal} if $\phi(\sup_\alpha x_\alpha)=\sup_\alpha \phi (x_\alpha)$ for all bounded monotone increasing nets of self adjoint elements $\{x_\alpha\}\subset M$. The property is equivalent to the continuity with respect to weak$^*$-topology of the algebras.
\subsection{Positivity preserving and Markovian semigroups on operator algebras} ([Br1]).
A strongly continuous semigroup $\{T_t:t>0\}$ of contractions on a unital C$^*$-algebra $A$
\[
T_t:A\to A\qquad T_t\circ T_s=T_{t+s}\, ,\quad T_0=I\, ,\quad \lim_{t\to 0^+}\|a-T_t a\|_A=0\, ,\quad a\in A
\]
is said to be {\it Markovian} if it is {\it positivity preserving and subunital }
\[
0\le a\le 1_{A}\quad\Rightarrow\quad 0\le T_t a\le 1_{A} \qquad a\in A.
\]
If $A$ is endowed with a densely defined trace $\tau$, the semigroup is said to be $\tau$-symmetric if
\[
\tau(a^*(T_t b))=\tau((T_t a^*) b)\qquad a,b\in A\cap L^1(A,\tau).
\]
In case $A$ is a von Neumann algebra, one requires the trace to be normal and the semigroup to be point-weak*-continuous in the sense
\[
\lim_{t\to 0^+}\eta(a-T_t a)=0\qquad a\in A,\,\, \eta\in A_*.
\]
In case the C$^*$-algebra $A$ does not have a unit, one can understand positivity preserving and Markovianity embedding $A$ into a larger unital C$^*$-algebra $\widetilde A$ and there using the unit $1_{\widetilde A}$ instead of $1_A$. For example one can choose $A\oplus \mathbb{C}$.
\vskip0.1truecm\noindent
The generator $(L,D(L))$ of a Markovian semigroup on a C$^*$-algebra (resp. a von Neumann algebra) $A$ is a norm (resp. weak*) closed, densely defined operator on $A$ defined as
\[
D(L):=\{a\in A: \exists \lim_{t\to 0^+}\frac{a-T_t a}{t}\in A\}\qquad La:=\lim_{t\to 0^+}\frac{a-T_t a}{t}\quad a\in D(L)
\]
where the limit is understood in the norm (resp. weak$^*$)-topology. Norm continuous semigroups are exactly those which have bounded generators and these are classified in [Lin], [CE].
{\it Completely positive, completely contractive or completely Markovian semigroups} are defined as those semigroups on $A$ whose ampliations to the algebras $A\otimes \mathbb{M}_n(\mathbb{C})$ are positive, contractive or Markovian for all $n\ge 1$. Completely Markovian semigroups are also called {\it dynamical semigroups} especially in Mathematical Physics and Quantum Probability (see [D1]).
\begin{rem}
Notice that, on von Neumann algebras, {\it strongly continuous semigroups are automatically norm continuous} as it follows by a direct application of [E Theorem 1]. Since semigroups with bounded generators have rather limited applications, this is the reasons for which {\it on von Neumann algebra the natural continuity of a semigroups is the point-weak*-continuity}.
\end{rem}

\section{Noncommutative Potential Theory}
In this section, we let $(A,\tau)$ be a C$^*$-algebra endowed with a densely defined, lower semicontinuous faithful trace and consider the GNS representation $\pi_{GNS}$ acting on the space $L^2(A,\tau)$. We will indicate by $L^\infty(A,\tau)$ the von Neumann algebra $(\pi_{\rm GNS}(A))''\subseteq B(L^2(A,\tau))$ generated by $A$ through the GNS representation.\\
Recall that the little Lipschitz algebra is defined as
\[
{\rm Lip}_0(\R):=\{f:\R\to\R:f(0)=0, |f(t)-f(s)|\le|t-s|, t,s\in\R\}.
\]
If $a=a^*\in A$ and $f\in {\rm Lip}_0(\mathbb{R})$, then $f(a)\in A$ acquires a meaning thank to the fact that C$^*$-algebras are closed under continuous functional calculus. Since, by assumption, $A^\tau:=A\cap L^2(A,\tau)$ is dense in $A$ and a fortiori in $L^2(A,\tau)$, if $a=a^*\in L^2(A,\tau)$ then $f(a)\in L^2(A,\tau)$ may bedefined as the limit in $L^2(A,\tau)$ of the sequence $f(a_n)\in L^2(A,\tau)$ associated to a sequence $a_n\in A\cap L^2(A,\tau)$ converging to $a$ in $L^2(A,\tau)$.
\subsection{Dirichlet forms on C$^*$-algebras with trace d'apres Albeverio-Hoegh-Krohn}
In this section we define Dirichlet forms and Markovian semigroups on the space $L^2(A,\tau)$ and discuss the connection between them and the Markovian semigroups on the von Neumann algebra $L^\infty(A,\tau)$, where $(A,\tau)$ is a C$^*$-algebra $A$ endowed with a densely defined, lower semicontinuous faithful trace, introduced in [AHK1]. Even if we will not discuss them in this notes, we mention that D. Guido, T. Isola and S. Scarlatti in [GIS] provided the extension of this theory to the case of {\it non-symmetric Dirichlet forms}.
\begin{defn}
A {\it Dirichlet form} is a lower semicontinuous functional
\[
\E:L^2(A,\tau)\to (-\infty,+\infty]
\]
with domain $\F:=\{a\in L^2(A,\tau): \E[a]<+\infty\}$ satisfying the properties
\vskip0.1truecm\noindent
i) $\F$ is dense in $L^2(A,\tau)$\vskip0.2truecm\noindent
ii) $\E[a^*]=\E[a]$ for all $a\in L^2(A,\tau)$ (reality)\vskip0.2truecm\noindent
iii) $\E[f(a)]\le\E[a]$ for all $a=a^*\in L^2(A,\tau)$ and all $f\in {\rm Lip}_0(\R)$ (Markovianity).
\vskip0.2truecm\noindent
A Dirichlet form is said to be
\vskip0.1truecm\noindent
iv) {\it regular} if its domain $\F$ is dense in $A$\vskip0.2truecm\noindent
v) {\it complete Dirichlet form} if the ampliation $\E^n$ on the algebra $(A\otimes M_n(\mathbb{C}),\tau\otimes {\rm tr}_n)$ defined
\[
\E^n:L^2(A\otimes M_n(\mathbb{C}),\tau\otimes {\rm tr}_n)\to (-\infty,+\infty]\qquad \E^n[[a_{i,j}]_{i,j=1}^n]:=\sum_{i,j=1}^n\E[a_{i,j}]
\]
is a Dirichlet forms for all $n\ge1$.\\
\vskip0.2truecm\noindent
A strongly continuous, self-adjoint semigroup $\{T_t:t>0\}$ on $L^2(A,\tau)$ is said
\vskip0.1truecm\noindent
vi) {\it positivity preserving} if $T_ta\in L^2_+(A,\tau)$ for all $a\in L^2_+(A,\tau)$
\vskip0.1truecm\noindent

vii) {\it Markovian} if it is positivity preserving and for $a=a^*\in A\cap L^2(A,\tau)$
\[
0\le a\le 1_{\widetilde A}\quad \implies \quad 0\le T_t a\le 1_{\widetilde A}\qquad t>0
\]
viii) {\it completely Markovian} if the extensions $T^n_t:=T_t\otimes I_n$ to $L^2(A\otimes M_n(\mathbb{C}),\tau\otimes {\rm tr}_n)$ are Markovian semigroups for all $n\ge 1$.
\end{defn}
\begin{rem} 1) If in the Markovianity condition one considers as $f$ the zero function in ${\rm Lip}_0(\R)$, one verifies that Dirichlet forms are nonnegative.\\
2) It may be checked that Markovianity is equivalent to the single contraction property
\[
\E[a\wedge 1]\le\E[a]\qquad a=a^*\in L^2(A,\tau)
\]
in which only the {\it unit contraction} $f(t):=t\wedge 1$ is involved. A geometric Hilbertian interpretation of this fact will be vital to extend the theory beyond the trace case.\\
3) A nice characterization of elements of type $f(a)$ for a fixed $a=a^*\in L^2(A,\tau)$ and $f\in {\rm Lip}_0(\R)$ has been shown in [AHK1] as those hermitian $b=b^*\in L^2(A,\tau)$ such that
\[
b^2\le a^2\, ,\qquad |b\otimes 1-1\otimes b|^2\le |a\otimes 1-1\otimes a|^2\, .
\]
4) Since $L^2(A\otimes M_n(\mathbb{C}),\tau\otimes {\rm tr}_n)=L^2(A,\tau)\otimes L^2(M_n(\mathbb{C}),{\rm tr}_n)$, the ampliations are equivalently defined as
\[
\E[a\otimes m]:=\E[a]\cdot \|m\|^2_{\rm HS}\qquad a\otimes m\in L^2(A,\tau)\otimes L^2(M_n(\mathbb{C}),{\rm tr}_n)\, .
\]
\end{rem}
The first fundamental result of the Albeverio-Hoegh-Krohn work [AHK1] is the following correspondence which generalize that of Beurling-Deny in the commutative case.
\begin{thm}
There exists a one-to-one correspondence among
\vskip0.1truecm\noindent
i) Dirichlet forms $(\E,\F)$ on $L^2(A,\tau)$
\vskip0.1truecm\noindent
ii) Markovian semigroups $\{T_t:t>0\}$ on $L^2(A,\tau)$
\vskip0.1truecm\noindent
iii) $\tau$-symmetric, Markovian semigroups $\{S_t:t>0\}$ on the von Neumann algebra $L^\infty(A,\tau)$.
\vskip0.1truecm\noindent
Moreover, the semigroups are completely Markovian if and only if the quadratic form is a completely Dirichlet form.
\end{thm}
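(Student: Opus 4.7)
The plan is to assemble the correspondence from three classical ingredients: (a) the spectral-theoretic bijection between closed nonnegative quadratic forms and strongly continuous self-adjoint contraction semigroups; (b) a noncommutative Hilbertian projection lemma of Beurling--Deny type, applied to two closed convex subsets of $L^2(A,\tau)$ that encode positivity and unit contraction; and (c) an interpolation/duality argument based on $\tau$-symmetry to transfer the semigroup between $L^2(A,\tau)$ and $L^\infty(A,\tau)$. The ampliation to $A\otimes M_n(\mathbb{C})$ will then give the ``completely Markovian'' case essentially for free.

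For (i)$\Leftrightarrow$(ii), I would first use Remark~4.2(1) to observe that $\E$ is automatically nonnegative, and its lower semicontinuity on $L^2(A,\tau)$ together with density of $\F$ make it a closed, densely defined quadratic form; by the spectral theorem it admits a unique nonnegative self-adjoint generator $(L,D(L))$ with $\E[a]=\|L^{1/2}a\|_2^2$, and $T_t:=e^{-tL}$ is a strongly continuous self-adjoint contraction semigroup. Reality of $\E$ shows that $T_t$ commutes with the involution, so it preserves the hermitian part of $L^2(A,\tau)$. The nontrivial point, which I expect to be the main obstacle, is the equivalence between form-Markovianity and semigroup-Markovianity.

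To resolve it I would invoke the following Hilbertian projection lemma (going back to Brezis and systematically used in the commutative Beurling--Deny theory): if $K\subset H$ is a closed convex subset of a Hilbert space $H$ and $(\E,D(\E))$ is a closed, densely defined, nonnegative quadratic form with semigroup $T_t=e^{-tL}$, then $T_t(K)\subset K$ for all $t\ge 0$ iff the metric projection $P_K$ satisfies $\E[P_K a]\le\E[a]$ for every $a\in H$. I would apply this to two subsets of $L^2(A,\tau)$: the self-dual cone $K_+:=L^2_+(A,\tau)$, whose metric projection is $a\mapsto a_+$, and the ``unit cell'' $K_1:=\{a=a^*\in L^2(A,\tau):a\le 1_{\widetilde A}\}$, whose metric projection is $a\mapsto a\wedge 1$. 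Verifying that these \emph{are} the metric projections is the delicate step, and this is exactly where the characterization of $f(a)$ elements recalled in Remark~4.2(3) enters: the two inequalities $b^2\le a^2$ and $|b\otimes 1-1\otimes b|^2\le |a\otimes 1-1\otimes a|^2$ precisely encode the noncommutative Lipschitz property needed to conclude that $a\wedge 1$ is the closest hermitian element of $K_1$ to $a$. Once these identifications are in place, Remark~4.2(2) reduces form-Markovianity to $\E[a\wedge 1]\le\E[a]$, which via the projection lemma is equivalent to the unit-contraction property of $T_t$; applying the same lemma to $K_+$ gives positivity preservation.

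Finally, for (ii)$\Leftrightarrow$(iii), I would use $\tau$-symmetry (which for self-adjoint $T_t$ on $L^2(A,\tau)$ is automatic from the inner product $\tau(a^*b)$) to extend $T_t$ by duality to a contraction on $L^1(A,\tau)$, and then interpolate with the $L^\infty$-contractivity that follows from the unit-contraction bound on the dense subspace $A\cap L^2(A,\tau)$. This produces a $\tau$-symmetric, positivity preserving, subunital, point weak${}^*$-continuous semigroup $\{S_t\}$ on $L^\infty(A,\tau)$, and the semigroup law plus $\tau$-symmetry are preserved by the extension. Conversely, restricting a $\tau$-symmetric Markovian semigroup on $L^\infty(A,\tau)$ to $A\cap L^2(A,\tau)$ and extending via $\tau$-symmetry to $L^2(A,\tau)$ yields a Markovian semigroup in the sense of (ii); uniqueness of the extension/restriction makes the correspondence a bijection. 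The entire argument applies verbatim at the level of the matrix ampliations $(A\otimes M_n(\mathbb{C}),\tau\otimes{\rm tr}_n)$, using $L^2(A\otimes M_n(\mathbb{C}),\tau\otimes{\rm tr}_n)=L^2(A,\tau)\otimes L^2(M_n(\mathbb{C}),{\rm tr}_n)$ and the compatible ampliations $\E^n$ and $T_t\otimes I_n$, so the uniform validity for all $n\ge 1$ yields the claimed equivalence between complete Dirichlet forms and completely Markovian semigroups.
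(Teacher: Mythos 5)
Your outline is essentially correct, but it is not the route the paper takes: for this theorem the paper gives no proof at all, saying only that the proofs in [AHK1] ``are based on a careful analysis of the normal contractions on $A$'' --- i.e.\ the original argument works directly with the functional calculus $a\mapsto f(a)$, $f\in{\rm Lip}_0(\R)$, inside the C$^*$-algebra. What you propose instead is the Hilbertian-geometric argument: the Ouhabaz-type equivalence between invariance of a closed convex set under $e^{-tL}$ and non-increase of $\E$ under the metric projection. That is precisely the mechanism the paper itself adopts only later, in Section~6, where Markovianity on general standard forms is \emph{defined} through the projection $\xi\mapsto\xi\wedge\xi_\o$ ([Cip1], [GL1,2]); so your proof amounts to specializing the standard-form theory to the tracial case. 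The trade-off is real: the normal-contraction analysis of [AHK1] yields the Lipschitz-calculus Markovianity (iii) of Definition~4.1 directly and never leaves the algebra, while your projection argument is cleaner at the Hilbert-space level and generalizes to non-tracial states --- but it hinges on the identification $P_{K_1}a=a\wedge 1$, which is true \emph{only because} $\tau$ is a trace (for a non-tracial state the metric projection onto $\{\xi\le\xi_\o\}$ is not given by functional calculus, which is exactly why Section~6 must phrase Markovianity projectively). One local correction: Remark~4.2(3) is the wrong tool for that identification --- it characterizes the elements $f(a)$ with $f\in{\rm Lip}_0(\R)$, not nearest points. The identification follows instead from an elementary variational inequality: $a-a\wedge 1=(a-1)_+$, the functions $(t-1)_+$ and $\min(t,1)$ satisfy $(t-1)_+\min(t,1)=(t-1)_+$ and $(t-1)_+(1-t)_+=0$, whence for every $b=b^*\in L^2(A,\tau)$ with $b\le 1$
\[
\bigl(a-a\wedge 1\,\big|\,(a\wedge 1)-b\bigr)_{L^2(A,\tau)}
=\tau\bigl((a-1)_+(1-b)\bigr)
=\tau\bigl((a-1)_+^{1/2}(1-b)(a-1)_+^{1/2}\bigr)\ge 0,
\]
which is exactly the characterizing inequality of the Hilbert projection onto $K_1$. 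With that repaired, and with the routine density argument extending subunitality from $A\cap L^2(A,\tau)$ to all of $K_1$ before invoking the projection lemma, your scheme --- including the $L^1$--$L^\infty$ interpolation for (ii)$\Leftrightarrow$(iii) and the verbatim ampliation for the completely Markovian statement --- assembles into a correct proof.
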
\noindent
The correspondence between semigroups and quadratic forms on $L^2(A,\tau)$ is given by the relation
\[
\E[a]=\lim_{t\to 0}t^{-1}\bigl(a|(I-T_t)a\bigr)_{L^2(A,\tau)}\qquad a\in L^2(A,\tau)
\]
where both sides are finite precisely when $a\in\F$. The correspondence between the semigroup on the Hilbert space $L^2(A,\tau)$ and the one the von Neumann algebra $L^\infty(A,\tau)$ is given by
\[
S_t a=T_t a\qquad a\in A\cap L^2(A,\tau).
\]
\begin{rem}
The above correspondence is exactly the original one proved in [AHK1] even if the result still holds true if one start with a semi-finite von Neumann algebra $(M,\tau)$ and a densely defined, semifinite trace on it.\\
We prefer the first presentation since it prepares the ground i) to naturally introduce and discuss the notion of {\it regularity} of a noncommutative Dirichlet forms, which, as in the Beuling-Deny theory, is the key notion to develop a rich potential theory [CS4] and ii) to develop the intrinsic differential calculus of Dirichlet spaces (see Section 5 below and [CS1]).
\end{rem}
The second fundamental result of the Albeverio-Hoegh-Krohn work is the following
\begin{thm}
Let the C$^*$-algebra $A$ be represented as acting on a Hilbert space $h$. Let $K$ be a self-adjoint (non necessarily bounded) operator on $h$ and $m_i\in L^2(h)$ be Hilbert-Schmidt operators for $i=1, 2,\cdots$. Then the quadratic form
\[
\E[a]:=\sum_{i=1}^\infty {\rm Tr}(|[a,m_i]|^2)+{\rm Tr}(K|a|^2)\qquad a\in L^2(A,\tau)
\]
is a completely Dirichlet form provided it is densely defined.
\end{thm}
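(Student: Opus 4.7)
The plan is to decompose $\E = \E_J + \E_K$ with $\E_J[a] := \sum_{i\ge 1} {\rm Tr}(|[a, m_i]|^2)$ and $\E_K[a] := {\rm Tr}(K|a|^2)$, verify each axiom of a completely Dirichlet form for each summand separately, and then reassemble. Density of the domain is assumed, and countable sums of nonnegative lower semicontinuous quadratic forms remain nonnegative and lower semicontinuous (the sum being a supremum of partial sums), so the two summands can be treated in isolation. Since $a \in L^2(A,\tau)$ need not be bounded on $h$, I would first define $\E$ on the dense subalgebra $A \cap L^2(A,\tau)$, where each $[a,m_i]$ is unambiguously Hilbert-Schmidt on $h$, verify Markovianity and reality there, and then extend to all of $L^2(A,\tau)$ by lower semicontinuous closure, Markovianity passing to the closure by the standard approximation argument.

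For the killing piece $\E_K$ (with $K\ge 0$, as forced by the nonnegativity of a Dirichlet form), one has the clean identity $\E_K[a]=\|aK^{1/2}\|_{\rm HS}^2$, a squared Hilbert-space seminorm, hence automatically closed. Markovianity reduces, via spectral calculus, to the scalar inequality $|f(t)|^2\le t^2$ valid for every $f\in{\rm Lip}_0(\R)$: for self-adjoint $a$ this yields $f(a)^2\le a^2$ as positive operators, and $K\ge 0$ then gives ${\rm Tr}(Kf(a)^2)\le{\rm Tr}(Ka^2)$. Reality $\E_K[a^*]=\E_K[a]$ requires $\|a^*K^{1/2}\|_{\rm HS}=\|aK^{1/2}\|_{\rm HS}$, which holds provided $K$ is affiliated with the commutant of $A$ (an implicit hypothesis without which the killing form cannot meet the reality axiom).

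The core step is Markovianity of $\E_J$. For self-adjoint $a$ with spectral resolution $a=\int\lambda\, dE(\lambda)$, one has the double-operator-integral identity
\[
[g(a), m_i]=\int\!\!\int(g(\lambda)-g(\mu))\, dE(\lambda)\, m_i\, dE(\mu),
\]
which, after taking the Hilbert-Schmidt norm, produces
\[
\|[g(a),m_i]\|_{\rm HS}^2=\int\!\!\int|g(\lambda)-g(\mu)|^2\, d\nu_{m_i}(\lambda,\mu)
\]
for the positive spectral measure $\nu_{m_i}$ on $\R^2$ attached to $m_i$. Applied to $g=f\in{\rm Lip}_0(\R)$, the pointwise bound $|f(\lambda)-f(\mu)|^2\le|\lambda-\mu|^2$ immediately gives $\|[f(a),m_i]\|_{\rm HS}^2\le\|[a,m_i]\|_{\rm HS}^2$, which sums over $i$ to the Markov contraction for $\E_J$. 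Reality is secured by symmetrizing the family (replace $\{m_i\}$ by $\{m_i\}\cup\{m_i^*\}$): the identity $[a^*,m]^*=-[a,m^*]$ together with $\|X\|_{\rm HS}=\|X^*\|_{\rm HS}$ then yields $\E_J[a^*]=\E_J[a]$.

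The main technical obstacle is making the above spectral-calculus identity rigorous: for $a$ with pure-point spectrum $a=\sum_k\lambda_k P_k$ it reduces to the elementary bilinear expansion $[g(a),m]=\sum_{k,l}(g(\lambda_k)-g(\lambda_l))P_k m P_l$ with $\|[g(a),m]\|_{\rm HS}^2=\sum_{k,l}|g(\lambda_k)-g(\lambda_l)|^2\|P_k m P_l\|_{\rm HS}^2$, where the Lipschitz bound applies term by term; for general self-adjoint $a$ one either appeals to the Birman-Solomyak theory of double operator integrals or approximates $a$ in the strong resolvent sense by operators with pure-point spectrum and passes to the limit using the lower semicontinuity established earlier. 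Finally, complete Markovianity follows by repeating the entire argument on each matrix ampliation $A\otimes M_n(\C)$ with generators $m_i\otimes I_n$ (still Hilbert-Schmidt on $h\otimes\C^n$) and $K\otimes I_n$, using the decomposition $\E^n[[a_{ij}]]=\sum_{i,j}\E[a_{ij}]$ to reduce Markovianity at level $n$ to entry-wise Markovianity of $\E$ itself.
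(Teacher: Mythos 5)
Your proposal is correct in substance but takes a genuinely different route from the paper. The paper does not reprove the statement directly: it records that the original proof in [AHK1] rests on a careful analysis of normal contractions on $A$, and then obtains the theorem as a corollary of Theorem 5.2 (every densely defined, closable symmetric derivation yields a completely Dirichlet form), by viewing $a\mapsto i[a,m_i]$ (the ``Commutators II'' example) together with the killing term as a symmetric derivation into a direct sum of bimodules; Markovianity there comes from the noncommutative chain rule $\partial f(a)=((L_a\otimes R_a)(\tilde f))(\partial a)$ with $\|\tilde f\|_\infty\le 1$ for $f\in{\rm Lip}_0(\R)$. Your double-operator-integral identity is exactly this chain rule specialized to the coarse bimodule $L^2(h)$: for bounded $a=a^*$ the left and right multiplications $L_a,R_a$ are commuting bounded self-adjoint operators on $L^2(h)$, with joint spectral measure $S\times T\mapsto E(S)\,\cdot\,E(T)$, so $[g(a),m]=(g(L_a)-g(R_a))m$ and $\|[g(a),m]\|^2_{\rm HS}=\iint |g(\lambda)-g(\mu)|^2\,d\nu_m$ with $\nu_m(S\times T)=\|E(S)mE(T)\|^2_{\rm HS}$ holds for every bounded Borel $g$. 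This means your ``main technical obstacle'' dissolves: no Birman--Solomyak machinery and no pure-point approximation are needed. What your route buys is self-containedness (elementary spectral theory in place of the general bimodule/derivation formalism); what the paper's route buys is generality (the same Theorem 5.2 covers gradients, group cocycles, etc.) and the complete Dirichlet property built in. Your ampliation argument for complete Markovianity (stability of the shape of the form under $m_i\otimes I_n$, $K\otimes I_n$ on $h\otimes\C^n$) is correct and is the standard device.

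Two soft spots, both of which you at least partially flag. First, ``extend by lower semicontinuous closure'' tacitly assumes the form is closable from $L^2(A,\tau)$: the l.s.c.\ relaxation always exists and your argument does show it is Markovian (via $\|f(a)-f(b)\|_2\le\|a-b\|_2$ for $1$-Lipschitz $f$), but without closability it need not agree with the stated formula even on $A\cap L^2(A,\tau)$. The same applies to your claim that $\E_K[a]=\|aK^{1/2}\|^2_{\rm HS}$ is ``automatically closed'': it is closed with respect to HS-convergence, not with respect to $L^2(A,\tau)$-convergence, and since the two traces $\tau$ and ${\rm Tr}$ are a priori unrelated, closability does not follow from ``densely defined'' alone. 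This is precisely the hypothesis the paper makes explicit in Theorem 5.2 (``closable as an operator from $L^2(A,\tau)$ to $\H$'') and which the survey statement of Theorem 4.5 suppresses. Second, your symmetrization of $\{m_i\}$ proves the theorem for a modified quadratic form, not the one in the statement; and indeed the reality axiom (ii) of Definition 4.1 genuinely fails without $*$-symmetry of the family: in $M_2(\C)$ with $m=e_{12}$, $K=0$, one has $\E[e_{12}]=0$ while $\E[e_{12}^*]=\|e_{22}-e_{11}\|^2_{\rm HS}=2$. Your sufficient condition for the killing part is right: if $K\ge 0$ is affiliated with $A'$ then ${\rm Tr}(Ka^*a)={\rm Tr}(a^*Ka)={\rm Tr}(Kaa^*)$ by cyclicity. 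These are defects of the survey's formulation rather than of your argument --- the original AHK setting works with self-adjoint elements, where reality is vacuous and your Markovianity estimate is all that is needed --- but be aware that the symmetrized family yields a different form, so as written you have not proved the literal statement for non-$*$-symmetric data (nor can anyone, with Definition 4.1 as given).
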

This result is fundamental not only because it provides a tool to construct a large class of examples but also because it suggests, at least in one direction, a correspondence between completely Dirichlet forms and unbounded derivations $a\mapsto i[a,m]$ on the C$^*$-algebra $A$ (see Section 5 below).
\vskip0.1truecm\noindent
The proofs in [AHK1] of both theorems are based on a careful analysis of the {normal contractions} on $A$.

\subsubsection{Dirichlet energy forms on Clifford C$^*$-algebras}
Here we illustrate the first example of a noncommutative Dirichlet form. It has been created to represents the quadratic form of a physical Hamiltonian of an assembly of electrons and positrons. In particular, its definition and the study of its properties has been introduced by L. Gross [G1,2] in connection with the problem of existence and uniqueness of the ground state of physical Hamiltonians describing Fermions.\\
Let $h$ be a complex Hilbert space and $J$ a conjugation on it (i.e. an anti-linear, anti-unitary operator such that $J^2=I$). Systems whose number of particles is not a priori bounded above are described by the Fock space
\[
\mathfrak{F}(h):=\bigoplus_{n=0}^\infty h^{\otimes^n}\, .
\]
Particles system obeying a Fermi-Dirac statistics are described by the Fermi-Fock subspace
\[
\mathfrak{F}_-(h):=P_-(\mathfrak{F}(h)).
\]
where the orthogonal projection $P_-$ is defined by
\[
P_-(f_1\otimes \cdots \otimes f_n)=(n!)^{-1}\sum_\pi \varepsilon_\pi f_{\pi(1)}\otimes \cdots \otimes f_{\pi(n)}
\]
where the sum is over all permutations $(1,\cdots ,n)\mapsto (\pi(1),\cdots ,\pi(n))$. For $f\in h$, the creation operator $a^*(f)$, defined as $a^*(f):=\sqrt{n+1}P_-(f\otimes g_1\otimes \cdots \otimes g_n)$, is bounded with norm $\|a^*(f)\|=\|f\|_h$. Together with the annihilation operator defined by $a(f):=(a^*(f))^*$, it satisfies the canonical anti-commutation CAR relations
\[
a(f)a(g)+a(g)a(f)=0,\qquad a(f)a^*(g)+a^*(g)a(f)=(f|g){\rm I}_h\qquad f,g\in h
\]
which represents the Pauli's Exclusion Principle. The Clifford C$^*$-algebra ${Cl}(h)$ is defined as the C$^*$-algebra generated by the fields operators
\[
b(f):=a^*(f)+a(Jf)\qquad f\in h
\]
i.e. as the intersection of all C$^*$-subalgebras of $B(\mathfrak{F}_-(h))$ containing the fields $\{b(f):f\in h\}$. It is highly noncommutative since it is a simple C$^*$-algebra in the sense that it has no nontrivial closed, bilateral ideals. The Fock vacuum vector $\O:=1\oplus 0\cdots \in \mathfrak{F}_-(h)$ defines a trace vector state on it by
\[
\tau_0(A):=(\O|A\O)\qquad A\in {Cl}(h)
\]
and the natural map $D:{Cl}(h)\to \mathfrak{F}_-(h)$ given by $A\mapsto A\O$, extends to a unitary map from $L^2({Cl}(h),\tau_0)$ onto $ \mathfrak{F}_-(h)$, called the Segal isomorphism. This natural isomorphism allows to transfer on the Fermi-Fock space the order structure one has on $L^2({Cl}(h),\tau_0)$ and viceversa, to study on $L^2({Cl}(h),\tau_0)$ operators originally created on $\mathfrak{F}_-(h)$. This procedure is especially useful in combination with second quantization, where a self-adjoint operator $(A,D(A))$ on $h$ give rise to a self-adjoint operator $(d\Gamma(A),D(d\Gamma(A)))$ on $\mathfrak{F}_-(h)$ as follows. First define self-adjoint operators $(A_n, D(A_n))$ on $P_-(h^{\otimes^n})$ for $n\ge 0$ setting $A_0=0$ and
\[
A_n(P_-(f_1\otimes\cdots\otimes f_n)):=\sum_{k=1}^n P_-(f_1\otimes\cdots\otimes Af_k\otimes\cdots f_n)\qquad f_1\otimes\cdots\otimes f_n\in D(A_n):=D(A)^{\otimes^n}.
\]
The direct sum of the $A_n$ is essentially self-adjoint because it is symmetric and it has a dense set of analytic vectors formed by finite sums of anti-symmetrized products of analytic vectors of $A$. The self-adjoint closure $d\Gamma(A):={\overline{\oplus_{k=0}^\infty A_n}}$ of this sum is called the {\it second quantization} of $A$ and is denoted by $(d\Gamma(A),D(d\Gamma(A)))$. The main example of this procedure concerns the Number operator $d\Gamma(I)$.
\begin{thm} (Clifford Dirichlet form)
Let $(A,D(A))$ be a self-adjoint operator on $h$, commuting with $J$ and satisfying $A\ge mI_h$ for some $m>0$. Then
\vskip0.1truecm\noindent
i) the quadratic form $(\E,\F)$ of the  operator $H:=D^{-1}d\Gamma(A)D$
\[
\E[\xi]:=(\xi|H\xi)_{L^2({Cl}(h),\tau_0)}=(D\xi|d\Gamma(A)D\xi)_{\mathfrak{F}_-(h)}\qquad \xi\in D(H)
\]
is a completely Dirichlet form on $L^2({Cl}(h),\tau_0)$;\\
ii) the completely Markovian semigroup $e^{-tH}$ is hypercontractive in the sense that it is bounded from $L^2({Cl}(h),\tau_0)$ to $L^4({Cl}(h),\tau_0)$ as soon as $mt\ge (\ln 3)/2$,\\
iii) $\inf\sigma(H)$ is an isolated eigenvalue of finite multiplicity.
\end{thm}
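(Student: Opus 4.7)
The plan is to transport every question about $H$ to the Fermi-Fock side via the Segal isomorphism $D:L^2({Cl}(h),\tau_0)\to\mathfrak{F}_-(h)$. Since $A=A^*\ge mI_h$ commutes with $J$, the self-adjoint contraction semigroup $e^{-tA}$ on $h$ lifts to the second-quantized semigroup $\Gamma(e^{-tA})=e^{-td\Gamma(A)}$ on $\mathfrak{F}_-(h)$, and conjugation by $D$ produces $e^{-tH}$ on $L^2({Cl}(h),\tau_0)$. The three assertions then reduce respectively to Markovianity, hypercontractivity, and a spectral gap above the vacuum for this semigroup.

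For (i), by the Albeverio--Hoegh-Krohn correspondence (Theorem 4.3) and the single-unit-contraction characterization (Remark 4.2(2)), it suffices to show that $D^{-1}\Gamma(e^{-tA})D$ is a $\tau_0$-symmetric Markovian semigroup on ${Cl}(h)$. I would first verify that whenever $T=T^*$ is a contraction on $h$ commuting with $J$, the operator $\Gamma(T)$ is Segal-conjugate to a $\tau_0$-symmetric, positivity-preserving, unital map on ${Cl}(h)$; this is proved via Wick ordering, by tracking the action of $\Gamma(T)$ on Wick monomials $b(f_1)\cdots b(f_n)$ and invoking the characterization of the natural cone of $L^2({Cl}(h),\tau_0)$ in terms of the Fermi-Fock order. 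Taking $T=e^{-tA}$ yields Markovianity. For complete Markovianity I would use the graded tensor decomposition ${Cl}(h\oplus\C^n)\simeq{Cl}(h)\widehat{\otimes}{Cl}(\C^n)$, which reduces the matrix ampliations to the same Wick-monomial argument.

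The main obstacle is (ii). I would first reduce to the case $A=mI_h$, where $d\Gamma(A)=mN$ with $N$ the number operator. The core estimate is a sharp two-point Clifford inequality, obtained by an explicit $L^p$-computation on the two-dimensional Clifford algebra, yielding $\|e^{-sN}\|_{2\to 4}\le 1$ exactly when $e^{-s}\le 1/\sqrt{3}$. The extension to infinite dimensions proceeds by tensor induction, using that $\Gamma$ is multiplicative with respect to Fermi direct sums and that the $L^2\to L^4$ norms are submultiplicative under graded tensor products, which produces the constant $(\ln 3)/2$ without loss. For general $A\ge mI_h$, the spectral inequality transfers on the Dirichlet form side to Gross's logarithmic Sobolev inequality
\[
\tau_0(|a|^2\log|a|)\le\frac{1}{2m}\E[a]+\|a\|_2^2\log\|a\|_2,
\]
and the equivalence between log-Sobolev inequalities and hypercontractivity closes the argument. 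The delicate point, and the hard part of the whole theorem, is the sharp two-point lemma together with the transfer from an operator inequality on $h$ to an $L^p$-bound for the second-quantized semigroup on $L^p({Cl}(h),\tau_0)$.

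For (iii), I would argue on the Fock side using the $d\Gamma(A)$-invariant decomposition $\mathfrak{F}_-(h)=\C\O\oplus\bigoplus_{n\ge 1}P_-(h^{\otimes n})$. On the vacuum $d\Gamma(A)$ acts as $0$; on the $n$-particle sector it acts as $A_n=\sum_{k=1}^n I\otimes\cdots\otimes A\otimes\cdots\otimes I\ge nm\cdot I$. Hence $\sigma(d\Gamma(A))\subseteq\{0\}\cup[m,+\infty)$, and unitary equivalence via $D$ shows that $\inf\sigma(H)=0$ is an isolated eigenvalue whose eigenspace is $D^{-1}(\C\O)$, of dimension one.
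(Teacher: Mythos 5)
Your overall strategy for (i) is correct but it is not the paper's: you prove Markovianity of the semigroup $D^{-1}\Gamma(e^{-tA})D$ directly, via positivity of the second quantization $\Gamma(T)$ of a $J$-commuting self-adjoint contraction with respect to the natural cone of $L^2({Cl}(h),\tau_0)$, and then invoke the Albeverio--Hoegh-Krohn correspondence; this is the classical Gross/Schrader--Uhlenbrock route, and the sub-claim that $\Gamma(T)$ preserves the cone is itself a nontrivial lemma that you only gesture at. The paper instead reduces to $A=I_h$ and, in Section 5, realizes the form of the Number operator as the square of an explicit symmetric derivation, $\partial_f(b)=\tfrac{1}{2}\bigl(b(Jf)\cdot b-b\cdot b(Jf)\bigr)$, into the $\gamma$-twisted bimodule $\H_\gamma$, so that complete Markovianity follows at once from Theorem 5.2 (closable symmetric derivations yield completely Dirichlet forms), with no separate ampliation argument needed, whereas you must run your graded tensor reduction ${Cl}(h\oplus\C^n)\simeq {Cl}(h)\widehat{\otimes}{Cl}(\C^n)$ for every matrix ampliation. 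Your argument for (iii), via the sector decomposition $\mathfrak{F}_-(h)=\C\O\oplus\bigoplus_{n\ge1}P_-(h^{\otimes n})$ and the form bound $A_n\ge nm$, is correct and standard; note the paper proves neither (ii) nor (iii), attributing them to Gross.

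There is, however, a genuine gap in (ii): the assertion that ``the $L^2\to L^4$ norms are submultiplicative under graded tensor products'' is unjustified, and it is precisely the hard point of fermionic hypercontractivity. In the boson/commutative case, tensorization of hypercontractive bounds is immediate from Fubini and Minkowski's integral inequality, but the noncommutative $L^p$ norms on $L^p({Cl}(h_1\oplus h_2),\tau_0)$ do not factor across ${Cl}(h_1)\widehat{\otimes}{Cl}(h_2)$: elements of the two factors anticommute, $|a\widehat{\otimes}b|$ is not $|a|\widehat{\otimes}|b|$, and no routine induction propagates the two-point inequality. Gross's proof of exactly this $L^2\to L^4$ statement with threshold $e^{-2mt}\le 1/3$ requires a delicate dimension induction with ad hoc operator inequalities, and the sharp Nelson-type constants resisted until Carlen and Lieb introduced genuinely new ideas; so your tensor-induction step, as stated, would fail. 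The remaining reduction for general $A\ge mI_h$ is sound in outline --- $d\Gamma(A)\ge m\,d\Gamma(I_h)=mN$ in the form sense gives the logarithmic Sobolev inequality with constant $1/(2m)$ from the $N$ case, and Gross's equivalence between log-Sobolev inequalities and hypercontractivity closes the loop --- but that equivalence must itself be established in the noncommutative setting (Gross did this for the Clifford Dirichlet form), so it should be cited or proved rather than treated as automatic.
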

The main point is to prove the result for $A=I_h$ so that the Hamiltonian is $H=Dd\Gamma(I_h)D^{-1}=DND^{-1}$ is unitarely equivalent to the Number operator $N=d\Gamma(I_h)$. We shall see later a proof based on the structure of the Dirichlet form of the Number operator. The interest in noncommutative Dirichlet forms originated in QFT to extend to Fermions the non perturbative techniques of E. Nelson, I.M. Segal, J. Glimm, L. Gross, A. Jaffe, B. Simon and others, elaborated for Bosons systems.
\vskip0.2truecm\noindent
We conclude this section noticing that the {\it Clifford von Neumann algebra} $L^\infty({Cl}(h),\tau_0)$ generated by the GNS representation of the Clifford C$^*$-algebra provided by the Fock vacuum state, is isomorphic to the {\it the hyperfinite $II_1$-factor} (usually denoted by $R$) to which $\tau_0$ extends to a normal tracial state. While the {\it hyperfinitness} (see Section 7.3 below) is a reflection of the fact that $L^\infty({Cl}(h),\tau_0)$ is generated by the net of finite-dimensional subalgebras corresponding to finite dimensional subspaces of the Hilbert space $h$, its {\it uniqueness} is a fundamental result of A. Connes [Co 3]. As any von Neumann algebra, $R$ is generated by its projections $p\in R$ (which are defined as the self-adjoint elemets $p=p^*$ satisfying $p^2=p$). However, while projections in a type $I$ von Neumann algebra as $B(h)$ have traces which can assume integers values only (equal to the dimension of their ranges), the trace of a projection in $R$ may assume any real value $\tau_0(p)\in [0,1]$, interpreted as a {\it real dimension} of the range of $p$. This is the reason by which J. von Neumann regarded $R$ as exhibiting a {\it Euclidean continuous geometry}.

\section{Dirichlet forms and Differential Calculus: bimodules and derivations}
In this section we show that on C$^*$-algebras endowed with a densely defined, lower semicontinuous, faithful trace $(A,\tau)$, {\it completely Dirichlet forms are representations of a differential calculus} (see [CS1], [S1,2]). In fact they can be constructed, on one side, and determine, on the other side, closable derivations on the C$^*$-algebra $A$. This was suggested, at least in one direction, by the result of Albeverio-Hoegh-Krohn illustrated in Theorem 4.5 above.
\vskip0.2truecm\noindent
To specify what a derivation on a C$^*$-algebra $A$ is, let us recall the notion of $A$-bimodule $\H$: this is an Hilbert space together with two continuous commuting actions (say left and right) of $A$
\[
A\times\H\ni (a,\xi)\mapsto a\xi\in\H,\qquad \H\times A\ni (\xi,b)\mapsto \xi b\in\H\, .
\]
The commutativity says that $(a\xi)b=a(\xi b)$ for all $a,b\in A$ and $\xi\in\H$. If the left and right actions coincides, $a\xi=\xi a$, then $\H$ is called a $A$-mono-module. Equivalently, a $A$-bimodule is a representation on the Hilbert space $\H$ of the maximal or projective tensor product C$^*$-algebra $A\otimes_{\rm max}A^\circ$. Here $A^\circ$ denote the {\it opposite} C$^*$-algebra coinciding with $A$ as linear space with involution but in which the product is reversed in order: $x^\circ y^\circ:=(yx)^\circ$.\\
A {\it symmetric} $A$-bimodule $(\H,\J)$ is a $A$-bimodule $\J$ together with a conjugation $\J$ such that
\[
\J(a\xi b)=b^*(\J\xi)a^*\qquad a,b\in A,\quad \xi\in\H.
\]
\begin{defn}(Derivation on C$^*$-algebras)
A derivation on a C$^*$-algebra $A$ is a linear map $\partial:D(\partial)\to\H$ defined on a subalgebra $D(\partial)\subseteq A$ with values into a $A$-bimodule $\H$ satisfying the Leibnitz rule
\[
\partial (ab)=(\partial a)b+a(\partial b)\qquad a,b\in D(\partial)\subseteq A.
\]
The derivation is called symmetric if $D(\partial)$ is involutive, $\H$ is symmetric and
\[
\J(\partial a)=\partial a^*\qquad a\in D(\partial).
\]
\end{defn}
Here we review some examples of derivations.
\subsubsection{Gradient operator}
Let $M$ be a Riemannian manifold and consider the Hilbert space $L^2_{\mathbb{C}}(TM):=L^2(TM)\otimes_\R\mathbb{C}$ obtained complexifying the Hilbert space of square integrable vector fields. This is a mono-module\footnote{A mono-module is a bi-module in which the left and right actions coincide.} over the commutative C$^*$-algebra of continuous function $C_0(M)$ where the action is defined pointwise and it can be endowed with the involution $\J(\chi\otimes z):=\chi\otimes {\bar z}$. If $H^1(M)$ denotes the first Sobolev space, then a symmetric derivation is defined by the gradient operator
\[
\nabla : C_0(M)\cap H^1(M)\to L^2_{\mathbb{C}}(TM).
\]
\subsubsection{Difference operator}
Let $X$ be a locally compact Hausdorff space and let $j$ be a Radon measure on $X\times X$ supported off the diagonal. Left and right commuting actions of $C_0(X)$ on $L^2(X\times X, j)$ may be defined as
\[
(af)(x,y):=a(x)f(x,y),\qquad (fb)(x,y)=f(x,y)b(y)\qquad a,b\in C_0(X),\quad f\in L^2(X\times X,j)
\]
and one may check that
\[
j:C_c(X)\to L^2(X\times X,j)\qquad (\partial_ja)(x,y):=a(x)-a(y)
\]
is a symmetric derivation on $C_0(X)$ once the conjugation is defined as
\[
(\J f)(x,y):={\overline{f(y,x)}}.
\]
\subsubsection{Killing measure}
Let $X$ be a locally compact Hausdorff space and let $k$ be a Radon measure on $X$. Consider $L^2(X,k)$ as a $C_0(X)$-bimodule where the left action is the usual pointwise one while the right action is the trivial one so that $\xi b:=0$ for all $\xi\in L^2(X,k)$ and $b\in C_0(X)$. If one considers as $\J$ just the pointwise complex conjugation of functions in $L^2(X,k)$, then one may easily check that the map
\[
\partial_k:C_c(X)\to L^2(X,k)\qquad \partial a:=a
\]
is a symmetric derivation on $C_0(X)$.
\subsubsection{Commutators I}
Let $(A,\tau)$ be a C$^*$-algebra endowed with a faithful, semifinite trace and recall that $A_\tau:=\{a\in A:\tau(a^* a)<+\infty\}$ is a bilateral ideal in $A$. Then if one consider on the Hilbert space $L^2(A,\tau)$ the natural left and right actions of $A$ and the conjugation $\J a:=a^*$, one obtains a symmetric $A$-bimodule. Moreover any $b\in A $ give rise to a symmetric derivation
\[
\partial_b:A_\tau\to L^2(A,\tau)\qquad \partial_b a:=i[a,b]=i(ab-ba).
\]
If a sequence $\{b_k\in A:k\ge 1\}$ is fixed and one consider the direct sum of symmetric $A$-bimodules $\oplus_{k=1}^\infty L^2(A,\tau)$, then the direct sum
\[
\partial :=\oplus_{k=1}^\infty \partial_{b_k} :D(\partial)\to \oplus_{k=1}^\infty L^2(A,\tau)
\]
is a symmetric derivation defined on the involutive subalgebra $D(\partial)$ of those $a\in A_\tau$ such that the series $\oplus_{k_1}^\infty \|[a,b_k]\|^2_{L^2(A,\tau)}$  converges.
\subsubsection{Commutators II}
As a variation of the above construction, suppose that $A$ is represented on the Hilbert space $h$. Then the space of Hilbert-Schmidt operators $L^2(h)$ is a $A$-bimodule for the left $a\xi$ and right $\xi b$ actions of $a,b\in A$ on $\xi\in L^2(h)$ given by composition as operators on $h$. A natural involution is defined by $\J\xi:=\xi^*$ on $\xi\in L^2(h)$ and then a symmetric derivation is given by
\[
\partial_\xi:A\to L^2(h)\qquad \partial_\xi a:=i(a\xi-\xi a)\, .
\]
If a sequence $\{\xi_k\in L^2(h):k\ge 1\}$ of Hilbert-Schmidt operators is fixed and one consider the direct sum of symmetric $A$-bimodules
$\oplus_{k=1}^\infty L^2(h)$, then the direct sum
\[
\partial :=\oplus_{k=1}^\infty \partial_{\xi_k} :A\to \oplus_{k=1}^\infty L^2(h)
\]
is a symmetric derivation defined on the involutive subalgebra $D(\partial)$ of those $a\in A_\tau$ such that the series $\oplus_{k=1}^\infty \|a\xi_k -\xi_k a\|^2_{L^2(h)}$  converges. This last derivation is clearly related to the one appearing in the Albeverio-Hoegh-Khron Theorem above. We will come back on this construction later on.
\vskip0.2truecm
Next result shows that closable derivations give rise to Dirichlet forms ([CS1]).
\begin{thm}
Let $(A,\tau)$ be a C$^*$-algebra endowed with a densely defined, lower semicontinuous, faithful trace and let $(\partial,D(\partial),\H,\J)$ be a symmetric derivation, densely defined on a domain $D(\partial)\subset A\cap L^2(A,\tau)$, which is closable as an operator from $L^2(A,\tau)$ to $\H$. Then the closure of the quadratic form
\[
\E[a]:=\|\partial a\|^2_\H\qquad a\in \F:=D(\partial)
\]
is a completely Dirichlet form.
\end{thm}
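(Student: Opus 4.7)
The plan is to verify, for the closure of the form $\E[a]=\|\partial a\|_\H^2$, the axioms in Definition 4.1, namely density of domain, lower semicontinuity, reality, and Markovianity, together with their matrix ampliations for complete Markovianity. Density of the form domain in $L^2(A,\tau)$ is immediate from the hypothesis that $D(\partial)$ is dense. Lower semicontinuity of the closure is automatic once $\partial$ is closable as an operator from $L^2(A,\tau)$ to $\H$, which is assumed; its closure $\bar\partial$ yields the closed extension $\bar\E[a]:=\|\bar\partial a\|^2_\H$ whose form domain is $D(\bar\partial)$.

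The reality axiom follows from the symmetry of the derivation: for $a\in D(\partial)$ one has $\partial a^*=\J(\partial a)$ and $\J$ is antiunitary, so
\[
\E[a^*]=\|\partial a^*\|^2_\H=\|\J(\partial a)\|^2_\H=\|\partial a\|^2_\H=\E[a],
\]
and this extends to $\bar\E$ because the $*$-operation is $L^2$-isometric and therefore preserves the form core.

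The substantive step is Markovianity, and I expect it to be the main obstacle. Using the equivalent single-unit-contraction formulation recorded in the remark following Definition 4.1, it is enough to prove $\bar\E[a\wedge 1]\le\bar\E[a]$ for every self-adjoint $a\in\F$; by form-closure approximation it suffices to treat $a=a^*\in D(\partial)$. The mechanism is a bimodule chain rule asserting that, for $f\in{\rm Lip}_0(\R)$ and $a=a^*\in D(\partial)$, the element $\partial f(a)$ is the image of $\partial a$ under a bounded linear operator on $\H$ of norm at most ${\rm Lip}(f)$. Heuristically, with spectral resolution $a=\int\lambda\,dE_\lambda$, one wishes to represent
\[
\partial f(a)=\iint\frac{f(\lambda)-f(\mu)}{\lambda-\mu}\, dE_\lambda\,(\partial a)\,dE_\mu,
\]
where the spectral projections act on $\partial a$ on the left and right via the commuting bimodule actions. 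Since the divided difference has modulus $\le{\rm Lip}(f)\le 1$, the induced map is a contraction on $\H$, giving $\|\partial f(a)\|_\H\le\|\partial a\|_\H$. Rigorously, one first verifies the identity for polynomials $p$ with $p(0)=0$ by iterating the Leibniz rule,
\[
\partial p(a)=\sum_{k}p_k\sum_{j=0}^{k-1}a^j(\partial a)a^{k-1-j},
\]
then reorganizes the sum through the joint spectral decomposition of the commuting left and right actions of $a$ on $\H$ to recover the double-integral formula, and finally approximates the unit contraction $t\mapsto t\wedge 1$ uniformly on the bounded spectrum of $a\in A$ by polynomials whose divided differences are uniformly controlled by $1$. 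This bimodule functional calculus is the technical heart of the proof.

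Complete Markovianity is then a soft corollary: the ampliation $\partial\otimes I_n$ is a symmetric derivation from $L^2(A\otimes M_n(\C),\tau\otimes{\rm tr}_n)$ into the symmetric $A\otimes M_n(\C)$-bimodule $\H\otimes M_n(\C)$ (with conjugation $\J\otimes{*}$), still densely defined and closable, with quadratic form $\E^n$. Since $\tau\otimes{\rm tr}_n$ again satisfies the standing hypotheses on $A\otimes M_n(\C)$, the three preceding steps apply verbatim to yield that each $\E^n$ is a Dirichlet form, whence $\bar\E$ is completely Dirichlet. Outside the bimodule chain rule, everything is bookkeeping.
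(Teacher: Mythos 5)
Your proposal is correct and follows essentially the same route as the paper: the Markovianity is obtained from the noncommutative chain rule $\partial f(a)=((L_a\otimes R_a)(\tilde f))(\partial a)$ of [CS1], where your double spectral integral of the divided difference is exactly the operator $(L_a\otimes R_a)(\tilde f)$, bounded in norm by $\|\tilde f\|_\infty\le{\rm Lip}(f)\le 1$ because $L_a\otimes R_a$ is a $*$-representation of the commutative C$^*$-algebra $C(sp(a)\times sp(a))$. Your treatment of reality, closability, and complete Markovianity via matrix ampliation likewise matches the paper's argument.
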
\noindent
The proof of the above result goes through the establishment of {\it noncommutative chain rule} [CS1] for closable derivation, by which one has
\[
\partial f(a)=((L_a\otimes R_a)(\tilde f))(\partial a)\qquad a=a^*\in A\cap L^2(A,\tau),\quad f\in {\rm Lip}_0(\R)
\]
Here $L_a$ (resp. $R_a$) are the representation of ${\rm C} (sp(a))$, continuous, complex valued functions on the spectrum $sp(a)$ of
$a$, uniquely defined by
\[
\begin{aligned}
L_a(f)\xi=
\begin{cases}
& f(a)\xi \text{ if } f(0)=0 \\ &\xi \text{ if }f\equiv 1
\end{cases}
\quad f\in C(sp(a)) \quad \xi\in \mathcal{H}
\end{aligned}
\]
and
\[
\begin{aligned}
R_a(f)\xi=
\begin{cases}
& \xi f(a)\text{ if } f(0)=0 \\ & \xi \text{ if }f\equiv 1
\end{cases}
\quad f\in C(sp(a)) \quad \xi\in \mathcal{H} \, .
\end{aligned}
\]
$L_a\otimes R_a$ is the tensor product representation of $C(sp(a))\otimes C(sp(a))=C(sp(a)\times sp(a))$. When $I\subseteq \mathbb{R}$ is a closed interval and $f\in {\rm C}^1 (I)$, we will denote by $\tilde{f}\in {\rm C}(I\times I )$ the {\it differential quotient} on $I\times I$, sometimes called the {\it quantum derivative} of $f$, defined by
\begin{equation}
\begin{aligned}
\tilde{f}(s,t)=
\begin{cases}
& \frac{f(s)-f(t)}{s-t}\,\,\, \text{ if }\,\, s\neq t \\ & f'(s)\qquad
\text{ if }\,\, s=t.
\end{cases}
\end{aligned}
\end{equation}
Since commutators in $A$ are bounded derivations in the above sense, the above result provides an independent proof of the Albeverio-Hoegh-Khron Theorem 4.5 above.

\subsubsection{A derivation for the Clifford-Dirichlet form}
As a further application of the above result, let us show that the quadratic form $\E_N$ of the Number operator $N$ of Fermions, when seen on the space $L^2({Cl}(h),\tau_0)$ via the Segal isomorphism $D$, is a completely Dirichlet form on the Clifford algebra $({Cl}(h),\tau_0)$.
Recall first that $N$ and $\E_N$ can be written as
\[
N=\sum_{k=1}^\infty a^*(f_k)a(f_k)\qquad \E_N[\psi]:=\sum_{k=1}^\infty \|a(f_k)\psi\|^2
\]
for any orthonormal base of $\{f_k:k\ge 1\}\subset h$. Let us denote by $L_b$ and $R_b$ the left and right actions on $L^2({Cl}(h),\tau_0)$ of an element of the Clifford algebra $b\in {Cl}(h)$. The symmetry $\beta:=\Gamma(-I_h)$ of $\mathfrak{F}_-(h)$ induces an idempotent automorphism $\gamma\in B(\mathfrak{F}_-(h))$
\[
\gamma(A):=\beta A\beta\qquad A\in B(\mathfrak{F}_-(h)).
\]
Since $b(f)\beta=-b(f)$, we have $\gamma(b(f))=-b(f)$ for all $f\in h$ so that $\gamma$ leaves ${Cl}(h)$ globally invariant and then $\gamma\in {\rm Aut}({Cl}(h))$. Since ${Cl}(h)\subset L^2({Cl}(h),\tau_0)$ and $\beta\O=\O$ we have $D(\gamma(b))=\beta b\beta\O=\beta b\O=\beta D(b)$ so that
\[
\gamma(b)=(D^{-1}\beta D)(b)\qquad b\in {Cl}(h).
\]
For $f\in h$ and $\xi\in {Cl}(h)$ we have $DL_{b(f)} D^{-1}(D\xi)=D(L_{b(f)}\xi)=L_{b(f)}(\xi\O)=L_{b(f)}(D\xi)$ so that, since $ {Cl}(h)$ is dense in $L^2({Cl}(h),\tau_0)$, we obtain
\[
DL_{b(f)} D^{-1}=a^*(f)+a(Jf)=b(f).
\]
Since $\beta(P_-(f_1\otimes\cdots\otimes f_n))=P_-\beta(f_1\otimes\cdots\otimes f_n)=(-1)^nP_-(f_1\otimes\cdots\otimes f_n)$ and $\beta\O=\O$, by the CAR relations we have that $a^*(f)-a(Jf)\beta$ commutes with all of $b(g)$ for $g\in h$ and then with all elements of the Clifford algebra. Then, for $b\in {Cl}(h)$ we have $(a^*(f)-a(Jf))\beta(D(b))=(a^*(f)-a(Jf))\beta(b\O)=b(a^*(f)-a(Jf))\beta\O=b(a^*(f)-a(Jf))\O=ba^*(f)\O=b(a^*(f)+a(Jf))\O=bb(f)\O=(R_{b(f)}b)\O=D(R_{b(f)}b)$ and since $ {Cl}(h)$ is dense in $L^2({Cl}(h),\tau_0)$, we obtain
\[
DR_{b(f)} D^{-1}=a^*(f)-a(Jf)\beta
\]
which can be rewritten as
\[
a^*(f)-a(Jf)=DR_{b(f)} D^{-1}\beta=DR_{b(f)} (D^{-1}\beta D)D^{-1}.
\]
By summation we have
\[
a(Jf)=\frac{1}{2}D(L_{b(f)}-R_{b(f)}(D^{-1}\beta D))D^{-1}
\]
and changing $f$ with $Jf$ we get
\[
a(f)=D\Bigl(\frac{1}{2}(L_{b(Jf)}-R_{b(Jf)}(D^{-1}\beta D)\Bigr)D^{-1}.
\]
Consider now on the Hilbert space $\H_\gamma :=L^2({Cl}(h),\tau_0)$ the ${Cl}(h)$-bimodule structure where the right action of ${Cl}(h)$ is the usual one while the left one is twisted by the automorphism $\gamma$
\[
b_1\cdot\xi\cdot b_2:=\gamma (b_1)\xi b_2\qquad b_1,b_2\in{Cl}(h),\quad\xi\in L^2({Cl}(h),\tau_0)
\]
(dots indicate actions in this new bimodule structure). The definition is justified by the fact that, as one can easily check, this new left action is continuous and commutes with the right one. Let us now check that for any fixed $f\in h$, the map
\[
\partial_f:{Cl}(h)\to \H_\gamma\qquad \partial_f :=\frac{i}{2}(L_{b(Jf)}-R_{b(Jf)}(D^{-1}\beta D)),
\]
more explicitly given by a module commutator
\[
\begin{split}
\partial_f (b)&=\frac{1}{2}(L_{b(Jf)}-R_{b(Jf)}(D^{-1}\beta D))(b)\\
&=\frac{1}{2}(L_{b(Jf)}(b)-R_{b(Jf)}(D^{-1}\beta D)(b))\\
&=\frac{1}{2}(b(Jf)b-R_{b(Jf)}(\gamma(b))\\
&=\frac{1}{2}(b(Jf)b-\gamma(b)b(Jf))\\
&=\frac{1}{2}(b(Jf)\cdot b-b\cdot b(Jf))\\
\end{split}
\]
is a derivation in the sense that the Leibniz rule holds true:
\[
\begin{split}
\partial_f (ab)&=\frac{1}{2}\{b(Jf)\cdot(ab)-(ab)\cdot b(Jf)\}\\
&=\frac{1}{2}\{(b(Jf)\cdot a-a\cdot b(Jf))\cdot b+a\cdot (b(Jf)\cdot b-b\cdot b(Jf))\}\\
&=(\partial_f a)\cdot b+a\cdot (\partial_fb)\qquad a,b\in {Cl}(h).
\end{split}
\]
To define a symmetry $\J_\gamma$ such that $(\H_\gamma,\J_\gamma)$ is a symmetric bimodule, notice first that the automorphism $\gamma$ of the Clifford algebra ${Cl}(h)$ can be extended to a symmetry acting on the Hilbert space $L^2({Cl}(h),\tau_0)$. In fact as $\beta\O=\O$, the trace $\tau_0$ is $\gamma$-invariant: $\tau_0(\gamma(b))=(\O|\gamma(b)\O)=(\beta\O|b\beta\O)=(\O|b\O)=\tau_0(b)$ for all $b\in {Cl}(h)$. Consequently $\gamma$ is isometric with respect to $L^2$-norm
\[
\|\gamma(b)\|^2_2=\tau_0((\gamma(b))^*\gamma(b))=\tau_0(\gamma(b^*)\gamma(b))=\tau_0(\gamma(b^*b))=\tau_0(b^*b)=\|b\|^2_2\qquad b\in {Cl}(h)
\]
so that by density it extends to an isometry on the whole $L^2({Cl}(h),\tau_0)$ such that $\gamma^2=I$. Further, as for all $a,b\in {Cl}(h)$ we have
\[
(\gamma(a)|b)_2=\tau_0((\gamma(a))^*b)=\tau_0((\gamma(a^*))b)=\tau_0(\gamma(a^*\gamma(b)))=\tau_0(a^*\gamma(a^*))=(a|\gamma(b))_2 ,
\]
it results that $\gamma$ is a symmetry on $L^2({Cl}(h),\tau_0)$ which commutes with the symmetry $\J$
\[
\gamma^*=\gamma\, ,\quad \gamma^2=I\, ,\quad\gamma\circ\J=\J\circ\gamma\, .
\]
A new conjugation is then defined by $\J_\gamma:=\J\circ\gamma = \gamma\circ\J$ on $\H_\gamma$ in such a way that $(\H_\gamma,\J_\gamma)$ is a symmetric bimodule over the Clifford algebra ${Cl}(h)$, as it results from the following identities for $a,b,c\in {Cl}(h)$
\[
\J_\gamma(a\cdot b\cdot c)=\J(\gamma(\gamma(a)b c))=\J(a\gamma(b)\gamma(c)))=(\gamma(c))^*(\gamma(b))^*a^*=(\gamma(c^*))(\J_\gamma(b)a^*=c^*\cdot \J_\gamma(b)\cdot a^*\, .
\]
Since for $f\in h$ one has $\J_\gamma(b(Jf))=\gamma(b(Jf)^*)=\gamma(b(f))=-b(f)$, it follows that
\[
\begin{split}
\J_\gamma(\partial_f b)&=\frac{1}{2}\J_\gamma((b(Jf)\cdot b-b\cdot b(Jf)))\\
&=\frac{1}{2}(b^*\cdot\J_\gamma(b(Jf))-\J_\gamma (b(Jf))\cdot b^* )\\
&=\frac{1}{2}(b(f)\cdot b^*-b^* b(f))\\
&=\partial_{Jf}(b^*).
\end{split}
\]
Consequently, if $f\in h$ is $J$-real (in the sense that $Jf=f$) then $\partial_f$ is $\J_\gamma$-symmetric
\[
\J_\gamma(\partial_f b)=\partial_{J}(b^*)\qquad b\in  {Cl}(h).
\]
Choosing a Hilbert base $\{f_k:k\ge 1\}\subset h$ made by $J$-real vectors $Jf_k=f_k$, we can represent the quadratic form of the Number operator on the space $L^2( {Cl}(h),\tau_0)$ as
\[
\E_{Cl}[b]:=\sum_{k=1}^\infty \|\partial_{f_k} b\|^2_{\H_\gamma}\qquad b\in  \F_{Cl}
\]
where the form domain is obviously $\F_{Cl}:=D^{-1} (D(\sqrt N))$. Setting $\H_{Cl}:=\bigoplus_{k=1}^\infty \H_\gamma$ as a direct sum of symmetric ${Cl}(h)$-bimodules, we have that  $\partial_{Cl}:=\bigoplus_{k=1}^\infty \partial_{f_k}$ is a symmetric derivation of the Clifford algebra into $\H_{Cl}$ such that
\[
\E_{Cl}[b]=\|\partial_{Cl} b\|^2_{\H_{Cl}}\qquad b\in\F_{Cl}
\]
and which is densely defined and closable as sum of bounded derivations. By Theorem 5.2 above, the associated semigroup is thus completely Markovian.

\subsubsection{Dirichlet form on noncommutative tori.}
This is a fundamental example appearing in Noncommutative Geometry [Co5] in which the relevant {\it algebra of coordinates} $A_\theta$ of the space is a noncommutative deformation of the algebra of continuous functions on a classical torus. For any fixed $\theta\in [0,1]$, $A_\theta$, called {\it noncommutative $2$-torus}, is defined as the
universal C$^*$-algebra generated by two unitaries $U$ and $V$,
satisfying the relation
\[
VU=e^{2i\pi\theta}UV\, .
\]
When $\theta=0$ one recovers the algebra of continuous functions on the 2-torus. All elements of $A_\theta$ can be written as a series $\sum_{m,n\in\mathbb{Z}}c_{m,n}U^mV^n$  with complex coefficients.
A tracial state is specified by
\[
\tau : A_\theta \to\C\qquad\tau(U^m V^n)=\delta_{m,0}\delta_{n,0} \qquad m,n\in \mathbb{Z}
\]
so that
\[
L^2(A_\theta,\tau))=\Bigl\{\sum_{m,n\in\mathbb{Z}}c_{m,n}U^mV^n:\sum_{m,n\in\mathbb{Z}}|c_{m,n}|^2<+\infty\Bigr\}\simeq l^2(\mathbb{Z}^2).
\]
A densely defined, closed form is given by
\[
\E\Bigl[\sum_{m,n\in\mathbb{Z}}c_{m,n}U^mV^n\Bigr] =\sum_{m,n\in\mathbb{Z}}(m^2+n^2)|c_{m,n}|^2
\]
on the domain $\F\subset l^2(\mathbb{Z}^2)$ where the above series converges (i.e. the first Sobolev space). To check that we are dealing with a Dirichlet form, one may observe that it is a "square of a derivation", taking values in the direct sum of two standard bimodules $L^2(A,\tau)\oplus L^2(A,\tau)$ and given by the direct sum
\[
\partial(a)=\partial_1(a)\oplus \partial_2(a)
\]
of the derivations $\partial_1$ and $\partial_2$ from $A_\theta$ into $L^2(A_\theta,\tau)$ defined by
\[
\partial_1(U^mV^n)=imU^mV^n \; , \qquad\partial_1(U^mV^n)=inU^mV^n\qquad n,m\in\mathbb{Z}\, .
\]
The associated Markovian semigroup $\{T_t :t\ge 0\}$, characterized by
\[
T_t(U^mV^n)= e^{-t(m^2+n^2)}U^mV^n \qquad m,n\in\mathbb{Z},
\]
is clearly conservative, in the sense that $T_t 1_{A_\theta}=1_{A_\theta}$, because $\E[1_{A_\theta}]=0$. Even if, at a Hilbert space level, the Dirichlet form and its associated Markovian semigroup are clearly isomorphic to the Dirichlet integral and the heat semigroup on the classical (commutative) torus $\mathbb{T}^2$, written in Fourier series terms, their potential theoretic properties arise from the order structure of $A_\theta$ which may differ completely from those of $C(\mathbb{T}^2)$. The properties of $A_\theta$ depend a lot upon the rationality/irrationality and diophantine approximation properties of the parameter $\theta\in [0,1]$. For example consider the spectrum of the self-adjoint element $S:=U+U^*+V+V^*\in A_\theta$. If $\theta=0$ we have $A_\theta=C(\mathbb{T}^2)$ so that since $\mathbb{T}^2$ is connected and $S$ a real continuous function, its spectrum is a closed interval of the real line. When $\theta$ is irrational, however, the spectrum of $S$ is typically a Cantor set (so that, under "commutative spectacles" we would look at $A_\theta$ as a rather fragmented space).

\subsection{The derivation determined by a Dirichlet form}
The following result, in combination with the previous one, establishes a one-to-one correspondence between Dirichlet forms and closable derivations on C$^*$-algebras ([CS1], [S1,2]). It says that derivations are differential square roots of Dirichlet forms. It can be considered as a generalization of the construction of the (differential first order) Dirac operator from the (differential second order) Hodge-de Rham Laplacian of a Riemannian manifold.
\begin{thm}
Let $(\E,\F)$ be a completely Dirichlet form on a C$^*$-algebra endowed with a densely defined, lower semicontinuous faithful trace $(A,\tau)$\
Then there exists a densely defined, $L^2$-closable derivation $(\partial, D(\partial))$ with values in a symmetric $A$-bimodule  $(\H,\J)$ such that $D(\partial)$ is a form core and
\[
\E[a]=\|\partial a\|^2_\H\qquad a\in D(\partial).
\]
\end{thm}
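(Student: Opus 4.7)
My plan is to realise $\partial$ as the canonical derivation into a Hilbert bimodule constructed from a noncommutative \emph{carr\'e du champ}, thereby running the construction of Theorem 5.2 in reverse. First I would select a $*$-subalgebra $\B\subseteq A\cap\F\cap D(L)$ which is a form core for $\E$, where $L\ge 0$ is the self-adjoint generator of the Markovian semigroup $T_t$ associated to $\E$ on $L^2(A,\tau)$. A natural candidate is
$$
\B := (I+L)^{-1}\bigl(A\cap L^2(A,\tau)\bigr),
$$
since $(I+L)\m1$ is $\tau$-symmetric and Markovian, hence preserves $A$ (acting there as the resolvent of the $\tau$-symmetric Markovian semigroup $S_t$ on $L^\infty(A,\tau)$ provided by Theorem 4.4), and maps $L^2$ boundedly into $\F$. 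Completeness of the Dirichlet form $\E$ should then ensure that $\B$ is stable under involution and product.

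Next I would equip the algebraic tensor product $\B\otimes_{\mathrm{alg}}\B$ with a sesquilinear form $\langle\cdot\mid\cdot\rangle_0$ governed by a noncommutative carr\'e du champ
$$
\Gamma : \B\times\B \to A, \qquad \tau(\Gamma(a,a)) = \E[a],
$$
by setting
$$
\bigl\langle a\otimes b \,\bigm|\, c\otimes d\bigr\rangle_0 := \tau\bigl(b^*\,\Gamma(a,c)\,d\bigr).
$$
The crux is positive semidefiniteness of this form: on the diagonal $b=d=1$ it reduces to $\E\ge 0$, but positivity on the whole tensor product is a genuinely stronger statement, equivalent to \emph{complete} positivity of $\Gamma$ as an $A$-valued bilinear map. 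It is precisely this strengthening that complete Markovianity of $\E$ delivers, by polarisation of the matrix Dirichlet inequality for the ampliations $\E^n$ applied to $2\times 2$ matrices built from elements of $\B$. After quotienting by the null-space and completing, one obtains the Hilbert space $\H$, equipped with commuting left and right $A$-actions
$$
c\cdot (a\otimes b)\cdot d := ca\otimes bd
$$
and the antiunitary conjugation $\J(a\otimes b) := b^*\otimes a^*$, so that $(\H,\J)$ is a symmetric $A$-bimodule.

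The derivation is then the universal one,
$$
\partial : \B \to \H, \qquad \partial a := a\otimes 1 - 1\otimes a,
$$
where $1$ denotes the unit of the unitisation $\widetilde A$ when $A$ itself is non-unital. The Leibniz rule, the intertwining relation $\J(\partial a) = \partial(a^*)$ and the energy identity $\|\partial a\|_\H^2 = \tau(\Gamma(a,a)) = \E[a]$ then follow by direct computation. Closability of $\partial$ as an operator from $L^2(A,\tau)$ to $\H$ is a formal consequence of the lower semicontinuity of $\E$: if $a_n\to 0$ in $L^2$ and $\partial a_n\to\xi$ in $\H$, then $(a_n)$ is $\E$-Cauchy, so lower semicontinuity forces $\E[a_n]\to 0$ and hence $\xi=0$. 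That $D(\partial)=\B$ is a form core was arranged in the first step.

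I expect the positivity of $\langle\cdot\mid\cdot\rangle_0$ on $\B\otimes_{\mathrm{alg}}\B$ to be the main obstacle: every other step is either soft functional analysis (completion, closability) or routine algebraic verification (Leibniz, symmetry). This positivity is the precise algebraic content of complete Markovianity; without the \emph{completeness} of $\E$ one obtains only the scalar inequality $\tau(\Gamma(a,a))\ge 0$, which is insufficient to build $\H$. Polarising the matrix Dirichlet inequality so as to extract a Cauchy--Schwarz-type bound on $\Gamma$ is the same circle of ideas underlying the noncommutative chain rule of Theorem 5.2, and constitutes the technical core of the construction.
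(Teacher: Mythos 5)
Your overall architecture --- a semi-inner product on $\B\otimes_{\rm alg}\B$ built from the energy, the universal derivation $a\mapsto a\otimes 1-1\otimes a$, positivity extracted from complete Markovianity via matrix ampliations, and closability from lower semicontinuity --- is exactly the construction of [CS1] to which the paper appeals (the paper states Theorem 5.3 without proof, and your closability argument is correct as written). But two steps, as you have set them up, would fail. The more serious one: you postulate an $A$-valued carr\'e du champ $\Gamma:\B\times\B\to A$ with $\tau(\Gamma(a,a))=\E[a]$. No such object exists in general --- already in the commutative case the energy measure of a Dirichlet form need not be absolutely continuous with respect to the reference measure (that is precisely the extra hypothesis of ``admitting carr\'e du champ'', which fails, e.g., for Dirichlet forms on fractals), so $\Gamma(a,a)$ is in general only a positive linear functional on $A$, not an element of it. Since you use $\Gamma$ only through $\tau(b^*\Gamma(a,c)d)$ this is repairable, but then you must actually \emph{define} that functional intrinsically from $\E$, by a polarization-type formula of the shape
\[
\langle a\otimes b\,|\,c\otimes d\rangle_0:=\tfrac12\Bigl(\E\bigl(a,c(db^*)\bigr)+\E\bigl(a(bd^*),c\bigr)-\E\bigl(c^*a,\,bd^*\bigr)\Bigr),
\]
and it is from this explicit expression that both the positivity (via the complete Dirichlet property) and the boundedness of the left and right $A$-actions --- a verification you omit entirely, yet one that is indispensable before passing to the completion $\H$ --- are extracted. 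Writing ``$\tau(\Gamma(a,a))=\E[a]$'' does not determine $\Gamma$, so the step you yourself identify as the technical core is left without a definite object to which it could be applied.

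Second, your candidate core $\B=(I+L)^{-1}(A\cap L^2(A,\tau))$ has two defects. The Markovian resolvent acts on the von Neumann algebra $L^\infty(A,\tau)$, not on the C$^*$-algebra $A$ (invariance of $A$ would be a Feller-type property, which is not automatic --- cf.\ the discussion at the end of Section 8 of the paper), so $\B\not\subseteq A$, whereas Definition 5.1 requires the derivation to be defined on a subalgebra of $A$; and the image of an algebra under a resolvent is not an algebra --- completeness of $\E$ does not make it one, and your sentence asserting this is unsupported. The correct choice is the Dirichlet algebra $\B:=A\cap\F$ (or $L^\infty(A,\tau)\cap\F$), and the fact that \emph{this} is a $^*$-algebra is itself a nontrivial consequence of complete Markovianity, obtained by the $2\times 2$-matrix trick yielding $\E[ab]^{1/2}\le\|a\|_A\,\E[b]^{1/2}+\|b\|_A\,\E[a]^{1/2}$; its form-core property then follows from the mapping properties of the resolvent that you correctly invoke. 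With these two repairs your outline does reduce to the argument of [CS1]; without them, neither the domain nor the Hilbert bimodule is actually constructed.
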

The bimodule $(\H,\J)$ is called the {\it tangent bimodule} associated to $(\E,\F)$.

\subsubsection{Conditionally negative definite functions and Dirichlet forms on dual of discrete groups [CS1]}
Let $\lambda:\Gamma\to B(^2(\Gamma))$ be the left regular representation of a countable discrete group $\Gamma$
\[
(\lambda_sf)(t):=f(s^{-1}t)\qquad s,t\in\Gamma
\]
and consider the reduced C$^*$-algebra $C^*_r(\Gamma)$ defined as the smallest C$^*$-subalgebra of $B(L^2(\Gamma))$ containing the unitary operators $\lambda_s$ for all $s\in \Gamma$. Extending $\lambda$ to $c_c(\Gamma)$ as a convolution
\[
(\lambda(f)g)(t):=(f\ast g)(t)=\sum_{s\in\Gamma}f(ts^{-1})g(s)\qquad f,g\in c_c(\Gamma),\qquad t\in\Gamma ,
\]
we can identify $c_c(\Gamma)$ as a dense involutive subalgebra of $C^*_r(\Gamma)$. The involution of an element $f\in c_c(\Gamma)$ is given by $f^*(s)=\overline {f(s^{-1})}$. A faithful tracial state is determined  by
\[
\tau(\lambda(f))=f(e)\qquad f\in c_c(\Gamma)
\]
where $e\in\Gamma$ is the unit of the group. Since
\[
(\lambda(f)|\lambda(g))_{L^2(C^*_r(\Gamma),\tau)}=
(f|g)_{l^2(\Gamma)}\qquad f,g\in c_c(\Gamma),
\]
we can identify $L^2(C^*_r(\Gamma),\tau)$ with $l^2(\Gamma)$ at the Hilbert space level and represent the positive cone $L^2_+(C^*_r(\Gamma),\tau)$ as the one of all square integrable, positive definite functions on $\Gamma$. The von Neumann algebra $L(\Gamma)$ generated by the unitaries  $\{\lambda_s:s\in \Gamma\}$ on $l^2(\Gamma)$ is called the {\it group von Neumann algebra} of $\Gamma$. It is a finite von Neumann algebra since the tracial state $\tau$ on $C^*_r(\Gamma)$ extends to a normal tracial state on it and it is a factor if and only if the conjugacy class
\[
\{sts^{-1}\in\Gamma:s\in\Gamma\}
\]
of any $t\in\Gamma$ is an infinite set. In this setting, any {\it conditionally negative definite function} $l:\Gamma\to \mathbb{C}$, i.e. a normalized, symmetric function
\[
l(e)=0,\qquad l(s^{-1})={\overline {l(s)}}\qquad s\in G
\]
satisfying, for $s_1,\dots,s_n\in\Gamma$, $c_1,\dots,c_n\in\mathbb{C}$,
\[
\sum_{k=1}^n \overline{c_j} l(s_j^{-1} s_k)c_k \le 0\qquad{\rm whenever}\qquad \sum_{k=1}^n c_k=0,
\]
determines a completely Dirichlet form
\[
\E_l[a]:=\sum_{s\in\Gamma} l(s)|a(s)|^2\qquad a\in l^2(\Gamma)
\]
defined on the domain $\F_l$ where the above sum converges and whose associated completely Markovian semigroup is given by
\[
(T_t a)(s)=e^{-tl(s)}a(s)\qquad a\in l^2(\Gamma).
\]
To describe the associated derivation recall that any negative definite function can be represent by a $1$-cocycle $c:\Gamma\to \H_\pi$ of a unitary representation
$\pi:\Gamma\to\H_\pi$, i.e. a function satisfying
\[
c(st)=c(s)+\pi(s)(c(t))\qquad s,t\in\Gamma ,
\]
as follows
\[
l(s)=\|c(s)\|^2_{\H_\pi}\qquad s\in\Gamma.
\]
On the Hilbert space $\H_\pi \otimes l^2(\Gamma)$ a $C_r^*(\Gamma)$-bimodule structure is then specified by the left action $\pi\otimes\lambda$ and by the right action $id\otimes\rho$ where $\rho$ is the right regular action of $\Gamma$ and $id$ is the trivial action on $\H_\pi$ assigning the identity operator to any element of $\Gamma$. Using the natural isomorphism $\H_\pi\otimes l^2(\Gamma)\simeq l^2(\Gamma,\H_\pi)$, the derivation representing $\E_l$ is identified by
\[
(\partial_l a)(s)=a(s)c(s)\qquad a\in c_c(\Gamma),\quad s\in\Gamma.
\]
When $\Gamma=\mathbb{Z}^n$, the C$^*$-algebra $C_r^*(\Gamma)$ coincides with the algebra $C(\mathbb{T}^n)$ of continuous functions on the torus and if one considers the negative definite function $l(z_1,\cdots,z_n):=|z_1|^2+\cdots + |z_n|^2$, one recovers as Dirichlet form just the standard Dirichlet integral on $\mathbb{T}^n$ (see Section 2.1).

\subsection{Decomposition of derivation,  Beurling-Deny fromula revisited}([CS1,2]).
\vskip0.1truecm
Since an $A$-bimodule is just a representation of the C$^*$-algebra $A\otimes_{\rm max} A$, one disposes of all the tools that representation theory offers, such has decomposition theory, to analyze derivations and their associated Dirichlet forms. In the commutative situation, for example, one obtains, by an algebraic approach, a refinement of the Beurling-Deny decomposition of Dirichlet forms.
\vskip0.2truecm\noindent
In this section we introduce notions of bounded, approximately bounded and completely unbounded derivations and we prove that any derivation canonically split into a sum of the latter.\\
Let $\partial:D(\partial)\to \H$ be a densely defined derivation on a C$^*$-algebra $A$ and denote by $\mathcal{L}_A(\H)$ the algebra of bounded operators on $\H$ which commute both with left and right actions of $A$.\\
 An element $B\in\mathcal{L}_A(\H)$ will be said to be $\partial$-bounded if the map $B\circ\partial$ extends to a bounded map from $A$ into $\H$. Notice that if this is the case, $B\circ\partial$ is a derivation. A projection $p\in\mathcal{L}_A(\H)$ will be said to be approximately $\partial$-bounded if it is the increasing limit of a net of $\partial$-bounded projections.  As $\H$ is assumed to be separable, this means that one can write the $A$-bimodule $p(\H)$ as an at most countable direct sum $\bigoplus_n\H_n$ of $A$-bimodules
such that $p\circ\partial$ decomposes as a direct sum $\bigoplus_n \partial_n$ of bounded derivations $\partial_n:=p_n\circ\partial$ where $p_n\in\mathcal{L}_A(\H)$ is the $\partial$-bounded projection onto the $A$-submodule $\H_n$. A projection $p\in\mathcal{L}_A(\H)$ will be said to be completely $\partial$-unbounded
if $0$ is the only $\partial$-bounded projection smaller than $p$. The derivation $\partial$ will be said to be bounded (resp. approximately bounded, resp. completely unbounded) if the identity operator $1_\H$ is a $\partial$-bounded (resp. approximately $\partial$-bounded, resp. completely $\partial$-unbounded) projection.\\
Then one can prove that there exists a greatest approximately $\partial$-bounded projection $P_a\in\mathcal{L}_A(\H)$ and that every $\partial$-bounded $B\in\mathcal{L}_A(\H)$ satisfies $B\circ P_a = B$.\\
Setting $\H_a:=P_a(\H)$ and $\H_c:=(1-P_a)(\H)$  we have the decomposition of $\H=\H_a\oplus\H_c$ into its approximately bounded and completely unbounded sub-$A$-bimodules. Correspondingly, setting $\partial_a:=P_a\circ\partial$ and $\partial_c:=(1-P_a)\circ\partial$ we have the decomposition of the derivation $\partial=\partial_a\oplus\partial_c$ into its approximately bounded and completely unbounded components. Finally, any Dirichlet form can be canonically splitted as a sum of its approximately bounded and completely unbounded parts
\[
\E[a]=\|\partial a\|^2_\H=\|\partial_a a\|^2_{\H_j} + \|\partial_c a\|^2_{\H_c}\qquad a\in \F.
\]
This is a (purely algebraic) generalization of the Beurling-Deny decomposition of Dirichlet forms on a commutative C$^*$-algebra $A=C_0(X)$. In particular the completely unbounded part $\E_c$ can be identified with the diffusive part and the approximately bounded part $\E_a$ correspond to the sum $\E_j+\E_k$ of the jumping and killing parts. In the commutative situation $\E_c$ can also be characterized as the part of the Dirichlet form whose $C_0(X)$-bimodule $\H_c$ is the largest sub-$C_0(X)$-mono-module of the $C_0(X)$-bimodule $\H$ corresponding to $\E$, i.e. the largest submodule on which the left and right actions coincide. Moreover, since as any $C_0(X)$-mono-module, $\H_c$ is the direct integral $\int_X \H_x\,\mu(dx)$ of $C_0(X)$-mono-modules $\H_x$ whose actions are the simplest possible
\[
a\xi=\xi a=a(x)\xi\qquad a\in C_0(X),\quad \xi\in\H_x,\quad x\in X,
\]
in the corresponding splitting $\partial=\int_X \partial_x \mu(dx)$, the derivations $\partial_x$ of $C_0(X)$ satisfy the Leibniz property
\[
\partial_x (ab) = (\partial_ x a)b(x)+a(x)(\partial_x b)\qquad a,b\in\F\cap C_0(X).
\]

\subsection{Noncommutative potential theory and curvature in Riemannian Geometry}([CS2]).
Classical Potential Theory arose to understand properties of the potential energy functions in electromagnetism and in classical gravity. The properties of these functions were encoded in properties of the Laplace-Beltarmi operators and in those of the Dirichlet integrals of Euclidean domains. Dealing with Nonlinear Elasticity or Riemannian Geometry, one is naturally lead to consider other Laplace type operators and associated quadratic energy forms acting on sections of vector bundles over Riemannian manifolds. In this section we describe briefly the strict relation between curvature and a distinguished noncommutative Dirichlet form.
\vskip0.2truecm
Topological and geometric aspects of a Riemannian manifold $(V,g)$ are related to the Hodge-de Rham operator $\Delta_{\rm HdR}=dd^*+d^* d$ acting on the space $L^2(\Lambda^*(V))$ of square integrable sections of the exterior bundle $\Lambda^*(V)$. It generalizes the Laplace-Beltrami operator acting on functions but its quadratic form cannot be directly considered as a Dirichlet form, essentially because exterior forms do not realize a C$^*$-algebra. However, the  geometric aspects of $V$ are more deeply connected to the Dirac operator $D$ and its square $D^2$, the so called Dirac Laplacian, acting on sections of the Clifford bundle $Cl(V)$ essentially because it is the construction of this space and operators that depends on the metric $g$.\\
Recall that the fibers of $Cl(V)$ are the Clifford algebras $Cl(T_xV)$ of the Hilbert space $(T_xV,g_x)$. Since the exterior algebra $\Lambda^*_x V$ is nothing but the antisymmetric Fock space $\mathfrak{F}_-(T_xV)$, globalizing the Segal isomorphism, we met in a previous section, we have a canonical isomorphism of vector bundles between $Cl(V)$ and $\Lambda^*(V)$. The difference is that, by construction, the fibers of the bundle $Cl(V)$ form now C$^*$-algebras. As a consequence, the space $C^*_0(V,g)$ of continuous sections vanishing at infinity of the Clifford bundle form, by pointwise product on $V$, a C$^*$-algebra naturally associated to the Riemannian manifold $(V,g)$. Moreover, denoting by $\O_x\in \mathfrak{F}_-(T_xV)$ the vacuum vector and by $\tau_x (\cdot)=(\O_x|\cdot\O_x)$ the associated trace on $Cl(T_xV)$, using the Riemannian measure $m_g$, we get on the Clifford $^*$-algebra a densely defined, lower semicontinus, faithful trace
\[
\tau(\o):=\int_V  m_g(dx)\tau_x(\o_x)
\]
and the ordered Hilbert space $L^2(Cl(V,g),\tau)$. The Levi-Civita connection of $(V,g)$ can be lifted to a metric connection on the Clifford bundle and  represented, at the analytical level, by the covariant derivative $\nabla$ acting between the smooth sections of the Hermitian bundles $Cl(V,g)$ and $Cl(V,g)\otimes T^*V$.
\begin{thm}([DR1,2], [SU]).
The closure of the Bochner integral
\[
\E_B[\o]:=\int_V |\nabla \o(x)|^2\, m_g(dx)\qquad \o\in C^\infty_c(Cl()V,g)
\]
is a completely Dirichlet form on $L^2(Cl(V,g),\tau)$.
\end{thm}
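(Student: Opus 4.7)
The plan is to apply Theorem 5.2 by exhibiting the Levi-Civita covariant derivative $\nabla$ as a symmetric, densely defined, $L^2$-closable derivation of the Clifford C$^*$-algebra $A:=C_0(Cl(V,g))$ whose square is exactly $\E_B$. To set up the tangent bimodule, let $\H:=L^2(Cl(V,g)\otimes T^*V, m_g)$ and define commuting left and right $A$-actions on $\H$ by fibrewise Clifford multiplication on the first tensor factor,
\[
(a\cdot\xi)(x):=a(x)\xi(x), \qquad (\xi\cdot b)(x):=\xi(x)b(x),
\]
the products being taken in $Cl(T_xV)$ while the cotangent slot remains inert. Taking $\J$ to be the fibrewise Clifford involution on the first factor, tensored with the identity on the real cotangent factor, one verifies $\J(a\xi b)=b^*(\J\xi)a^*$, so $(\H,\J)$ is a symmetric $A$-bimodule in the sense of Section 5.

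Next I verify that $\nabla$, with domain $D(\nabla):=C^\infty_c(Cl(V,g))\subset A\cap L^2(A,\tau)$, is a symmetric derivation into $(\H,\J)$. By construction the Levi-Civita connection lifts to a metric connection on $Cl(V,g)$ that acts as a fibrewise derivation of the Clifford product; this produces the Leibniz rule $\nabla(\o\eta)=(\nabla\o)\eta+\o(\nabla\eta)$ on $D(\nabla)$. Compatibility of the connection with the fibrewise Clifford involution yields the intertwining $\J(\nabla\o)=\nabla(\o^*)$, which is the required symmetry. Density of $D(\nabla)$ in both $A$ and $L^2(A,\tau)$ is standard for smooth compactly supported sections of a Hermitian bundle.

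Finally, closability of $\nabla$ as an operator $L^2(A,\tau)\to\H$ follows because $\nabla$ is a first-order differential operator with smooth coefficients admitting a smooth formal adjoint $\nabla^{*}$ densely defined on $C^\infty_c(Cl(V,g)\otimes T^*V)\subset\H$; the pair $(\nabla,\nabla^{*})$ of densely defined operators with $\nabla^{*}\subset\nabla^{\dagger}$ forces $\nabla$ to be closable. Theorem 5.2 then yields that the closure of $\E_B[\o]=\|\nabla\o\|^2_\H$ is a completely Dirichlet form on $L^2(Cl(V,g),\tau)$. The main obstacle is verifying the symmetry identity $\J(\nabla\o)=\nabla(\o^*)$: this rests on the precise interplay between the fibrewise Clifford $*$-operation and the lifted Levi-Civita connection, which must be checked through metric compatibility and the fact that the connection preserves the $\mathbb{Z}_2$-grading of the Clifford algebra. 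The other ingredients — the Leibniz property of the lifted connection, density of compactly supported smooth sections, and closability via the formal adjoint — are standard once the symmetric bimodule framework is in place.
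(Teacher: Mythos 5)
Your proposal is correct and follows essentially the same route as the paper: both apply Theorem 5.2 to the lifted Levi-Civita covariant derivative $\nabla$, viewed as a symmetric, closable derivation of $C^*_0(V,g)$ into the bimodule $L^2(Cl(V,g)\otimes T^*V)$ in which left and right Clifford multiplication act on the first tensor factor while the cotangent slot stays inert, with $\J$ given by the Clifford involution (the paper's $\sigma\otimes\o\mapsto\sigma^*\otimes\bar\o$). The only cosmetic difference is that the paper verifies the Leibniz rule concretely, via the contraction operators $i_X$, the metric property of the connection and polarization, whereas you invoke it as a structural property of the lifted connection; your closability argument via the densely defined formal adjoint is the standard one and is consistent with the paper's sketch.
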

The self-adjoint operator $\Delta_B:=\nabla^*\nabla$ associated to $\E_B$, called the Bochner or connection Laplacian, thus generates a completely Markovian semigroup on $L^2(Cl(V,g),\tau)$ which is strongly Feller in the sense that it reduces to a strongly continuous Markovian semigroup on the Clifford algebra $C^*_0(V,g)$.\\
We may base the proof of the above result on Theorem 5.2 above. Notice first that on the Hilbert space $L^2(Cl(V,g)\otimes T^*V)$ we may consider the C$^*_0(V,g)$-bimodule structure given by
\[
\sigma_1\cdot(\sigma_2\otimes\o)\cdot\sigma_3:=(\sigma_1\cdot\sigma_2\cdot\sigma_3)\otimes\o
\]
and a symmetry $J$ given by
\[
J(\sigma_2\otimes\o):=\sigma_2^*\otimes{\bar\o}
\]
for $\sigma_1,\sigma_3\in C^*_0(V,g)$ and $\sigma_2\otimes\o\in L^2(Cl(V,g)\otimes T^*V)$, where $\sigma_2\to\sigma_2^*$ is the extension of the involution on the Clifford algebra and $\o\to\bar\o$ is the involution on the complexified cotangent bundle. Denoting by $i_X$ the contraction operator with respect to a smooth vector field $X$, we may consider the covariant derivative along $X$ given by $\nabla_X:=i_X\circ\nabla$. Since, by definition, the Levi-Civita connection is a metric connection, we have the identity
\[
\nabla(f\sigma)=\sigma\otimes df+f\nabla\sigma\qquad X(\sigma|\sigma)=(\nabla_X\sigma|\sigma)+(\sigma|\nabla_X\sigma)
\]
for any smooth section $\sigma$ of the Clifford bundle and any smooth function $f$. Since the contraction $i_X$ commutes with actions of the Clifford algebra we have
\[
i_X(\nabla(\sigma\cdot\sigma))=i_X((\nabla\sigma)\cdot\sigma+\sigma\cdot(\nabla\sigma))
\]
for all smooth vector fields $X$ so that
\[
\nabla(\sigma\cdot\sigma)=(\nabla\sigma)\cdot\sigma+(\nabla\sigma)\cdot\sigma,
\]
from which the Leibniz property follows by polarization. Notice that this result is independent upon the Riemannian curvature of the manifold. The situation changes drastically if we consider the Dirac Laplacian $D^2$ on $L^2(Cl(V,g),\tau)$ or better the Dirac quadratic form
\[
\E_D[\sigma]:=\int_Vm_g(dx)\,|D\sigma(x)|^2.
\]
Recall that the Dirac operator $D$ is defined locally for smooth sections $\sigma$ by
\[
(D\sigma)(x):=\sum_{k=1}^n e_k(x)\cdot (\nabla_{e_k}\sigma)(x),
\]
where the vector fields $e_k$ form an orthonormal base in a neighborhood of $x\in V$. At an Hilbert space level, the Dirac operator on the Clifford bundle is isomorphic to the de Rham operator on the exterior bundle
\[
D\simeq d+d^*
\]
and the Dirac Laplacian is isomorphic to the Hodge-de Rham Laplacian
\[
D^2\simeq (d+d^*)^2=dd^*+d^* d.
\]
Differently from the Bochner Laplacian, the potential theoretic properties of $D^2$ depend, however, strongly on the sign of the curvature
\begin{thm} ([CS2])
The following conditions are equivalent
\item i) the Dirac quadratic form $\E_D$ on $L^2(Cl(V,g),\tau)$ is a completely Dirichlet form
\item ii) the curvature operator of $V$ is nonnegative $\widehat R\ge 0$
\item iii) Dirac heat semigroup $e^{-tD^2}$ on the Clifford algebra $C^*_0(V,g)$ is completely Markovian.
\end{thm}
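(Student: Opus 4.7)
My plan rests on three pillars: the Albeverio--Hoegh-Krohn correspondence (Theorem 4.2), the derivation/Dirichlet-form equivalence (Theorems 5.2 and 5.4), and the Bochner--Weitzenböck identity on the Clifford bundle,
\[
D^2 = \nabla^*\nabla + \mathcal{R},
\]
where $\mathcal{R}$ is a pointwise (zeroth-order) self-adjoint endomorphism of $Cl(V,g)$ assembled algebraically from the Riemann curvature tensor via Clifford multiplication. The equivalence (i) $\Leftrightarrow$ (iii) is immediate from Theorem 4.2 applied to the trace $\tau$ on the Clifford C$^*$-algebra $C^*_0(V,g)$: $\E_D$ is a completely Dirichlet form on $L^2(Cl(V,g),\tau)$ if and only if the associated $\tau$-symmetric semigroup $e^{-tD^2}$ restricts to a completely Markovian semigroup on $C^*_0(V,g)$. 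So the genuine content is the equivalence (i) $\Leftrightarrow$ (ii).

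For (ii) $\Rightarrow$ (i), the Weitzenböck identity gives, on smooth compactly supported sections,
\[
\E_D[\sigma] = \E_B[\sigma] + \int_V \tau_x\bigl(\sigma^*(x)\mathcal{R}(x)\sigma(x)\bigr)\, m_g(dx).
\]
By Theorem 5.7, $\E_B$ is already a completely Dirichlet form, realized by the covariant derivative $\nabla$ on the symmetric bimodule $L^2(Cl(V,g)\otimes T^*V)$. The classical Gallot--Meyer theorem ensures that $\widehat R\ge 0$ as a self-adjoint endomorphism of $\Lambda^2 TV$ implies $\mathcal{R}(x)\ge 0$ as an endomorphism of $Cl(T_xV)$ at every point $x\in V$. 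The residual term is then a direct integral over $V$ of the fiberwise positive quadratic forms $\omega \mapsto \tau_x(\omega^*\mathcal{R}(x)\omega)$, which is precisely a noncommutative analogue of the Killing measure paradigm introduced in Section~5 (with the scalar measure $k$ replaced by the operator-valued multiplication $\mathcal{R}$) and is seen to be completely Markovian by direct verification. Since sums of completely Dirichlet forms are completely Dirichlet, (i) follows.

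For (i) $\Rightarrow$ (ii), I localize at an arbitrary point $x_0\in V$. Given any $\omega_0\in Cl(T_{x_0}V)$, I choose a family of smooth compactly supported sections $\sigma_\e$ concentrated in a geodesic ball around $x_0$ of radius shrinking as $\eto$, rescaled so that $\sigma_\e(x_0)=\omega_0$ while $\E_B[\sigma_\e]\to 0$ and the curvature contribution tends to $\tau_{x_0}(\omega_0^*\mathcal{R}(x_0)\omega_0)$. The complete Markovian property of $\E_D$ descends in this limit to the fiber form $\omega\mapsto \tau_{x_0}(\omega^*\mathcal{R}(x_0)\omega)$ on the finite-dimensional Clifford algebra $Cl(T_{x_0}V)$; forcing Markovianity of this quadratic form on a matrix algebra with trace compels $\mathcal{R}(x_0)\ge 0$, and inverting the algebraic formula expressing $\mathcal{R}$ in terms of $\widehat R$ on bivector-type elements recovers $\widehat R(x_0)\ge 0$.

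The main obstacle is this converse implication: one must disentangle the first-order Bochner contribution from the zeroth-order curvature term in $\E_D$ and then convert a contractive Markovian inequality into a linear positivity statement on $\mathcal{R}$. The rescaling argument hinges on the differing scaling behaviours of the two summands of the Weitzenböck identity under dilation of the supporting neighborhood, and on a careful choice of test sections saturating the Markovian defect in the limit; the non-commutativity of the fiber $Cl(T_{x_0}V)$ means even the fiberwise Dirichlet condition is a non-trivial algebraic constraint that must be matched to the curvature endomorphism via the Clifford multiplication, which is the reason the final equivalence involves specifically the curvature operator $\widehat R$ rather than, say, Ricci or scalar curvature.
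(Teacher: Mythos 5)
There are two genuine gaps here, and both sit exactly where the paper locates the real content of the theorem.

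First, your treatment of the curvature term misidentifies its structure. You write the Weitzenb\"ock remainder as a one-sided, killing-measure-type form $\int_V \tau_x\bigl(\sigma^*(x)\mathcal{R}(x)\sigma(x)\bigr)\,m_g(dx)$ and claim that pointwise positivity of the endomorphism $\mathcal{R}(x)$ (via Gallot--Meyer) yields complete Markovianity ``by direct verification''. But the remainder acts on the Clifford fibers by \emph{two-sided} Clifford multiplication, and the paper's proof rests on its explicit spectral form
\[
Q_R[\sigma]=\int_V m(dx)\sum_{\alpha}\mu_\alpha(x)\,\|[\eta_\alpha(x),\sigma_x]\|^2 ,
\]
where $\eta_\alpha(x)\in\Lambda^2_x V$ are eigenvectors of $\widehat R_x$ and $\mu_\alpha(x)$ the eigenvalues. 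When $\widehat R\ge 0$, this exhibits $Q_R$ as a superposition of squares of \emph{bounded derivations} (commutators), hence completely Dirichlet by the Albeverio--Hoegh-Krohn theorem or Theorem 5.2 --- that, not a killing-term analogy, is the mechanism behind ii) $\Rightarrow$ i). Positivity of $\mathcal{R}(x)$ as an operator on the fiber Hilbert space gives positivity of the quadratic form but not Markovianity, which is an order-theoretic contraction property; conversely, in your i) $\Rightarrow$ ii) step, Markovianity of the fiber form does not reduce to $\mathcal{R}(x_0)\ge 0$, and $\mathcal{R}\ge 0$ is not equivalent to $\widehat R\ge 0$ --- if it were, the theorem would characterize mere operator positivity of the Weitzenb\"ock term rather than the sign of the curvature operator. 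The paper's converse instead runs through the decomposition theory of derivations of Section 5.3: $Q_R$ is identified as the \emph{approximately bounded part} of $\E_D$ (the Bochner part being completely unbounded), hence is itself a Dirichlet form by the canonical splitting, and then a finite-dimensional analysis of Dirichlet forms on Clifford algebras of Euclidean spaces --- precisely the classification of which combinations $\sum_\alpha\mu_\alpha\|[\eta_\alpha,\cdot]\|^2$ are Markovian --- forces all $\mu_\alpha\ge 0$, i.e.\ $\widehat R\ge 0$. Your concentration/rescaling argument, even if made rigorous, terminates in the wrong fiberwise criterion.

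Second, the equivalence i) $\Leftrightarrow$ iii) is not ``immediate from Theorem 4.2''. The easy half is iii) $\Rightarrow$ i): since $D^2$ is symmetric on $L^2(Cl(V,g),\tau)$, complete Markovianity on the algebra passes to the $L^2$ semigroup, so $\E_D$ is completely Dirichlet. The hard half is that i)/ii) imply iii): one must show $e^{-tD^2}$ leaves the C$^*$-algebra $C^*_0(V,g)$ of continuous sections vanishing at infinity globally invariant and is \emph{strongly} continuous there. The Albeverio--Hoegh-Krohn correspondence only produces a point-weak*-continuous Markovian semigroup on the von Neumann algebra $L^\infty(Cl(V,g),\tau)$; on a noncompact manifold, invariance of $C_0$-sections is a genuine Feller-type statement. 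The paper obtains it from the ellipticity of $D^2$ (smoothing of compactly supported sections into bounded continuous ones), Markovianity to reduce the problem to $C_0(V)$, and the classical Feller property under nonnegative Ricci curvature, which is where $\widehat R\ge 0$ enters again. Your proposal omits this step entirely.
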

To describe the main steps of the proof let us recall that the curvature tensor $R$ of the metric defines the curvature operator $\widehat R$ on the Hermitian bundle $\Lambda^2 V$ as follows
\[
\widehat R_x:\Lambda^2_x V\to \Lambda^2_x V\qquad (\widehat R_x (v_1\wedge v_2)|v_3\wedge v_4)_{\Lambda^2_x V}:=R_x(v_1,v_2,v_3,v_4)\qquad v_1,v_2,v_3,v_4\in T_xV.
\]
The curvatures identities imply that $\widehat R_x$ is symmetric and thus self-adjoint when extended on the Hilbert space obtained complexifying $\Lambda^2_x V$. By the Bochner identity we have
\[
\E_D=\E_B+Q_R
\]
where $Q_R$ is the quadratic form on $L^2(Cl(V,g),\tau)$ given by
\[
Q_R[\sigma]=\int_V m(dx)\, Q_R(x)[\sigma_x]\qquad Q_R(x)[\sigma_x]=\sum_{\alpha=1}^{n(n-1)/2}\mu_\alpha(x) \|[\eta_\alpha(x),\sigma_x]\|^2
\]
where the norms are those of the Hilbert spaces $L^2(Cl(T_xV,g_x),\tau_x)$, $\eta_\alpha(x)\in \Lambda_x^2 V$ are eigenvectors of $\widehat R_x$ and $\mu_\alpha(x)\in\R$ the corresponding eigenvalues. To prove that i) implies ii) one notices that, by the Albeverio-Hoegh-Khron Theorem 4.5 or by the fact that commutators are bounded derivations, if the curvature operator is nonnegative, then all the eigenvalues are nonnegative and $Q_R$ appears as a superposition of Dirichlet forms. Since, by the Davies-Rothaus Theorem 5.4 above, $\E_B$ is a Dirichlet form, we have that $\E_D$ is a Dirichlet form too. The proof that i) implies ii) the main idea is to use the decomposition theory of derivations to prove that $Q_R$ is a Dirichlet forms because it coincides with the approximately bounded part of the Dirichlet form $\E_D$. Then, a careful analysis of the structure of the Dirichlet forms on the Clifford algebras of finite dimensional Euclidean spaces allows to conclude that all the eigenvalues or the curvature operator are nonnegative. Since $D^2$ is, by construction, a symmetric operator on $L^2(Cl(V,g),\tau)$, if we assume that the Dirac heat semigroup is completely Markovian on the Clifford algebra $C^*_0(V,g)$ then we get that it is a completely Markovian on $L^2(Cl(V,g),\tau)$ so that the quadratic form $\E_D$ is completely Dirichlet.  To prove that $\widehat R\ge 0$ implies that the Dirac semigroup leaves globally invariant the Clifford algebra and that there is strongly continuous one uses i) {\it ellipticity} of $D^2$ to deduce that $e^{-tD^2}$ transforms compactly supported smooth sections of the Clifford bundle into bounded continuous sections ii) {\it Markovianity}, to reduce the problems to the algebra $C_0(V)$ of continuous functions and iii) the fact that $\widehat R\ge 0$ implies that the Ricci curvature is nonnegative so that on $C_0(V)$ the Feller property holds true by a classical result.

\subsection{Voiculescu Dirichlet form in Free Probability}
In this section we describe a Dirichlet form appearing in Free Probability Theory discovered by Dan Virgil Voiculescu [V].
\vskip0.1truecm\noindent
Let $(M,\tau)$ be a noncommutative probability space, i.e. a von Neumann algebra endowed with a faithful, normal trace state. Let us fix a unital $^*$-subalgebra $1_M\in B\subset M$ and a finite set
$X :=\{X_1, . . . , X_n\} \subset M$ of noncommutative random variables, i.e. self-adjoint elements of $M$, {\it algebraically free with respect to} $B$. Let us consider the $^*$-subalgebra $B[X] \subset M$ generated by $X$ and $B$ (regarded as the algebra of {\it noncommutative polynomials in the variables} X with {\it coefficients} in the algebra $B$) and the von Neumann subalgebra $N \subset M$ generated by $B[X]$. Let $HS(L^2(N, \tau)) = L^2(N,\tau)\otimes L^2(N,\tau)$ be the Hilbert $N$-bimodule of Hilbert-Schmidt operators on $L^2(N,\tau)$ and $1_M\otimes 1_M\in HS(L^2(N,\tau))$ the rank one projection onto the multiples of the unit $1_M \in M \subset L^2(M,\tau)$.
\vskip0.1truecm\noindent
Within this framework, D.V. Voiculescu introduced a natural differential calculus with associated Dirichlet form.
\begin{thm}
There exists a unique derivation $\partial_{X_j}:B[X]\to HS(L^2(M,\tau))$ for any fixed $j=1,\cdots, n$ such that
\vskip0.1truecm\noindent
i) $\partial_{X_j}X_k=\delta_{jk}1_M\otimes1_M\qquad k=1,\cdots,n$
\vskip0.1truecm\noindent
ii) $\partial_{X_j}b=0$ for all $b\in B$.
\vskip0.1truecm\noindent
iii) The derivation $(\partial_{X_j}, B[X])$ is densely defined in $L^2(M,\tau)$, symmetric and it is closable if $1_M\otimes 1_M\in D(\partial^*_{X_j})$.
\vskip0.1truecm\noindent
iv) If $1_M\otimes 1_M\in \bigcap_{j=1}^n D(\partial^*_{X_j})$ the quadratic form $(\E,\F)$ defined as
\[
\E[a]:=\sum_{j=1}^n \|\partial_{X_j}a\|^2_{HS(L^2(M,\tau))}\qquad a\in \F:=B[X]
\]
is closable and its closure is completely Dirichlet form.
\end{thm}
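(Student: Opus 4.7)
The plan is to exploit the universal property of $B[X]$ as a free algebra over $B$ to construct the derivations, verify symmetry directly on generators, extract closability from a module-theoretic propagation of the single hypothesis $1_M\otimes 1_M\in D(\partial^*_{X_j})$, and finally reduce the Dirichlet form assertion to Theorem 5.2.

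For (i)--(ii), algebraic freeness of $X$ over $B$ means $B[X]$ is canonically isomorphic to the noncommutative polynomial algebra $B\langle X_1,\ldots, X_n\rangle$, so there are no nontrivial relations to obstruct the extension of a map defined on generators via the Leibniz rule. I would therefore define $\partial_{X_j}$ on generators by $\partial_{X_j}(X_k):=\delta_{jk}\,1_M\otimes 1_M$ and $\partial_{X_j}(b):=0$ for $b\in B$, and extend uniquely to a derivation on $B[X]$ with values in $HS(L^2(N,\tau))$ viewed as an $N$-bimodule by left and right multiplication.

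For (iii), density is immediate since $B[X]\subset N$ generates $N$ and hence is $L^2$-dense. With conjugation $\J(a\otimes b):=b^*\otimes a^*$ on $HS(L^2(N,\tau))$, one has $\J(1_M\otimes 1_M)=1_M\otimes 1_M$; since $X_j=X_j^*$ and $\partial_{X_j}(b)=\partial_{X_j}(b^*)=0$ on $B$, the identity $\J(\partial_{X_j}P)=\partial_{X_j}(P^*)$ holds on generators and propagates to all of $B[X]$ using $\J(a\xi c)=c^*\J(\xi)a^*$ together with the Leibniz rule. Closability reduces to showing that $D(\partial^*_{X_j})$ is dense in $HS(L^2(N,\tau))$. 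The core observation (and the main technical obstacle) is that $D(\partial^*_{X_j})$ is stable under the left and right actions of bounded elements of $B[X]$: given $\eta\in D(\partial^*_{X_j})$ and $a,c\in B[X]$, a Leibniz expansion
\[
c^*\partial_{X_j}(P)a^*=\partial_{X_j}(c^*Pa^*)-\partial_{X_j}(c^*)\cdot Pa^*-c^*P\cdot\partial_{X_j}(a^*)
\]
allows one to estimate $|\langle\partial_{X_j}P,a\eta c\rangle_{HS}|$ by a constant depending only on $a,c,\eta,\partial^*_{X_j}\eta$ times $\|P\|_{L^2}$, so that $a\eta c\in D(\partial^*_{X_j})$. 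Applying this to $\eta=1_M\otimes 1_M$ produces $P\otimes Q=P(1_M\otimes 1_M)Q\in D(\partial^*_{X_j})$ for all $P,Q\in B[X]$, and such tensors span an $L^2$-dense subspace of $HS(L^2(N,\tau))$. Hence $\partial^*_{X_j}$ is densely defined and $\partial_{X_j}$ is closable.

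For (iv), assemble the derivations into a single symmetric derivation
\[
\partial:=\bigoplus_{j=1}^n\partial_{X_j}:B[X]\longrightarrow\bigoplus_{j=1}^n HS(L^2(N,\tau)),
\]
with the diagonal conjugation and bimodule structure. Each summand being closable by (iii), the finite direct sum $\partial$ is closable as an operator from $L^2(N,\tau)$ into the Hilbert direct sum. Since $\E[a]=\|\partial a\|^2$ on the form core $B[X]$, Theorem 5.2 applies verbatim and yields that the closure of $\E$ is a completely Dirichlet form, completing the proof.
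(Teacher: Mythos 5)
Your proposal is correct, and the first thing to say is that the paper itself contains no proof of this statement: it is quoted from Voiculescu [V], with Theorem 5.2 supplied earlier as the general mechanism converting closable symmetric derivations into completely Dirichlet forms. Your argument fills in the proof exactly along the lines the paper's framework suggests: algebraic freeness of $X$ over $B$ legitimates the Leibniz extension from generators (uniqueness being automatic since a derivation is determined by its values on generators); the symmetry $\J(\partial_{X_j}a)=\partial_{X_j}(a^*)$ propagates from generators through $\J(a\xi c)=c^*\J(\xi)a^*$; the closability step is Voiculescu's original argument (stability of $D(\partial_{X_j}^*)$ under the $B[X]$-bimodule actions, applied to $1_M\otimes 1_M$, gives $B[X]\otimes B[X]\subseteq D(\partial_{X_j}^*)$, hence a densely defined adjoint and closability); and the reduction of (iv) to Theorem 5.2, with the finite direct sum of the $\partial_{X_j}$ as a symmetric derivation on the C$^*$-algebra generated by $B[X]$, is legitimate because that theorem only needs $D(\partial)$ dense in $L^2$. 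One point you should make explicit in the Leibniz estimate: the terms $\partial_{X_j}(c^*)\cdot Pa^*$ and $c^*P\cdot\partial_{X_j}(a^*)$ are controlled by a constant times $\|P\|_{L^2}$ only because, for $a,c\in B[X]$, the vectors $\partial_{X_j}(a^*)$ and $\partial_{X_j}(c^*)$ are \emph{finite} sums of elementary tensors $u\otimes v$ with $u,v\in B[X]$ bounded, so that $\|u\otimes(vPa^*)\|_{HS}\le\|u\|_{L^2}\|v\|_M\|a\|_M\|P\|_{L^2}$; for a generic vector of $HS(L^2(N,\tau))$ in place of $\partial_{X_j}(c^*)$, the action of the merely square-integrable $Pa^*$ would be unbounded, so this boundedness of the legs is what carries the estimate (equivalently, one may invoke Voiculescu's explicit formula for $\partial_{X_j}^*(P\otimes Q)$ in terms of $\partial_{X_j}^*(1_M\otimes 1_M)$ and $({\rm id}\otimes\tau)\partial_{X_j}$, $(\tau\otimes{\rm id})\partial_{X_j}$). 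A final cosmetic remark: the density you obtain is in $L^2(N,\tau)$, which is the correct reading of the statement; the paper's $L^2(M,\tau)$ is an abuse unless $N=M$, and your phrasing handles this properly.
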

Under the assumption $1_M\otimes 1_M\in \bigcap_{j=1}^n D(\partial^*_{X_j})$, the {\it Noncommutative Hilbert Transform of $X$ with respect to $B$} is defined as
\[
\mathcal{I}(X:B):=\Bigl(\sum_{j=1}^n \partial_{X_j}\Bigr)1_M\otimes 1_M\in L^2(M,\tau)
\]
and the {\it Relative Free Fisher information of X with respect to B} is defined as
\[
\Phi^*(X:B):=\|\mathcal{I}(X:B)\|^2_{L^2(M,\tau)}.
\]
It has been shown by P. Biane [Bia1] that this Dirichlet form is the Hessian of the Relative Free Fischer information on the domain where the Relative Free Fisher information is finite. Moreover, if $B=\mathbb{C}$ and still under the assumption that $1_M\otimes 1_M\in \bigcap_{j=1}^n D(\partial^*_{X_j})$, one has the following surprising spectral characterization of {\it semicircular random variables} $X$
\begin{thm} ([Bia1])
A Free Poincar\'e inequality holds true for some $c>0$
\[
c\cdot \|a-\tau (a)\|^2_2\le\E[a]\qquad a\in L^2(M,\tau)
\]
if and only if the random variable $X$ is centered, it has unital covariance and semicircular distribution.
\end{thm}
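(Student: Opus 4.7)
The plan is to reduce both directions to Voiculescu's conjugate-variable characterization of free semicircular families. Under the standing hypothesis $1_M\otimes 1_M\in\bigcap_{j}D(\partial_{X_j}^*)$, the conjugate variables $J_j:=\partial_{X_j}^*(1_M\otimes 1_M)\in L^2(M,\tau)$ exist, and Voiculescu proved that $(X_1,\ldots,X_n)$ is a centered free semicircular family with identity covariance if and only if $J_j=X_j$ for every $j=1,\dots,n$. Both implications can therefore be phrased as statements about when this identity holds.

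For the sufficiency direction I would realise the free semicircular system on the full Fock space $\mathcal{F}(\mathbb{C}^n)=\bigoplus_{k\ge 0}(\mathbb{C}^n)^{\otimes k}$ with vacuum $\Omega$, fix an orthonormal basis $\{e_1,\ldots,e_n\}$ of $\mathbb{C}^n$, and set $X_j=\ell(e_j)+\ell(e_j)^*$ where $\ell(e_j)$ is the free creation operator. Under the canonical identification $L^2(N,\tau)\cong\mathcal{F}(\mathbb{C}^n)$ sending $1\mapsto\Omega$, a short inductive argument on noncommutative monomials, based on the characterising property $\partial_{X_i}X_j=\delta_{ij}\,1\otimes 1$ together with the Leibniz rule, shows that $\partial_{X_j}$ is intertwined with the free annihilation operator $\ell(e_j)^*$ viewed as a map into $\mathcal{F}(\mathbb{C}^n)\otimes\mathcal{F}(\mathbb{C}^n)$. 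Hence the generator $L=\sum_j\partial_{X_j}^*\partial_{X_j}$ is unitarily equivalent to the free number operator $\sum_j\ell(e_j)^*\ell(e_j)$, whose spectrum is $\{0,1,2,\ldots\}$ with simple zero eigenvector $\Omega=1$. This delivers the Poincar\'e inequality with optimal constant $c=1$.

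For the necessity direction I would first test the Poincar\'e inequality on linear combinations $a=\sum_j\alpha_jX_j$: using $\partial_{X_i}X_j=\delta_{ij}\,1\otimes 1$ one finds $\E[a]=\sum_j|\alpha_j|^2$ whereas $\|a-\tau(a)\|_2^2=\alpha^*C\alpha$ with $C_{jk}=\tau(X_jX_k)-\tau(X_j)\tau(X_k)$, hence $cC\le I$ and in particular $\mathrm{tr}(C)\le n/c$. The adjoint relation gives $\tau(X_kJ_j)=\delta_{jk}$ and $\tau(J_j)=0$; by Cauchy--Schwarz $1=\tau((X_j-\tau X_j)J_j)\le\|X_j-\tau X_j\|_2\|J_j\|_2$, so that $\Phi^*(X)=\sum_j\|J_j\|^2\ge\sum_j\|X_j-\tau X_j\|_2^{-2}\ge n^2/\mathrm{tr}(C)$ by the arithmetic--harmonic mean inequality. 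Combining this with Voiculescu's free Cram\'er--Rao inequality $\Phi^*(X)\cdot\mathrm{tr}(C)\ge n^2$, whose equality case is precisely the centered semicircular family with covariance $C$, and exploiting the saturating role of the spectral gap on the linear span of the $X_j-\tau X_j$, I would force equality at every intermediate step. This simultaneously yields $\tau(X_j)=0$, $C=I$ and $J_j=X_j$, from which Voiculescu's characterization gives the free semicircular conclusion.

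The main obstacle is the extraction of the sharp identity $J_j=X_j$ from a mere inequality. The lower bound on $\Phi^*(X)$ is automatic and sharp on the semicircular family, but the matching upper bound must be squeezed out of the Poincar\'e inequality, and identifying it requires testing on carefully chosen vectors (for instance $\sum_j\alpha_jJ_j$ when it lies in $\F$, or suitable polynomial approximants thereof) and then invoking the variational characterisation of the spectral gap. Making this saturation argument rigorous, and checking that the resulting equality case in Voiculescu's free Cram\'er--Rao inequality propagates through the Cauchy--Schwarz applications to pin down both $C=I$ and the pointwise equality of conjugate variables, is where the bulk of the technical work lies.
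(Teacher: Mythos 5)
First, a point of comparison: the paper states this theorem without proof, citing [Bia1], so there is no in-paper argument to match against; your sufficiency direction (Fock-space realization of the semicircular family, $L=\sum_j\partial_{X_j}^*\partial_{X_j}$ unitarily equivalent to the free number operator with spectrum $\{0,1,2,\dots\}$ and simple kernel) is correct and is exactly the remark the paper itself makes right after the theorem. The genuine gap is in your necessity direction, and it is structural, not merely technical. All three of your estimates point the same way: the linear test gives $cC\le I$, your Cauchy--Schwarz chain gives $\Phi^*(X)\ge n^2/\mathrm{tr}(C)$, and free Cram\'er--Rao gives the very same lower bound $\Phi^*(X)\cdot\mathrm{tr}(C)\ge n^2$. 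Forcing equality in Cram\'er--Rao would require a matching \emph{upper} bound on $\Phi^*(X)$, and a Poincar\'e inequality --- a lower bound on $\E$ --- supplies no such thing. Indeed, already for $n=1$ one has pointwise $|\tilde f(x,y)|\ge |f(x)-f(y)|/D$ with $D$ the diameter of the spectrum, hence $\E[f]\ge (2/D^2)\,\mathrm{Var}(f)$: \emph{every} compactly supported single variable with finite free Fisher information satisfies a Poincar\'e inequality for some $c>0$, while $\Phi^*$ can be arbitrarily large. There is also a scaling obstruction to your plan: the hypothesis ``PI for some $c>0$'' is invariant under $X_j\mapsto\lambda X_j$ (which replaces $\E$ by $\lambda^{-2}\E$), whereas the conclusion $C=I$ is not, so no argument whatsoever can extract unital covariance from an unnormalized $c$. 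Biane's theorem is genuinely a statement about the sharp constant (equality in $c\cdot\mathrm{Var}\le 1$, obtained from your own linear test), and a correct proof must analyze the saturation case rather than squeeze two inequalities of the same direction.

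The bridge you are missing is the identity $LX_j=\sum_i\partial_{X_i}^*\partial_{X_i}X_j=\partial_{X_j}^*(1_M\otimes 1_M)=J_j$: the conjugate variables are the images of the coordinates under the generator. With this in hand the equality-case argument closes cleanly: since $\E[X_j]=\sum_i\|\delta_{ij}\,1_M\otimes 1_M\|^2=1$, the correctly normalized hypothesis makes $X_j-\tau(X_j)$ a saturator of the spectral-gap inequality, and a saturator must be an eigenvector of $L$ at the bottom of the nonzero spectrum; hence $J_j=LX_j=c\,X_j$, and $\tau(J_j)=0$ (which you correctly derived) then forces $\tau(X_j)=0$. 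Voiculescu's characterization --- the conjugate variables are linear in the $X_j$ if and only if the family is a centered free semicircular family, with covariance read off from the proportionality constant --- finishes the proof. Your proposal cites this characterization and even names ``the saturating role of the spectral gap,'' but the route you chose (Cram\'er--Rao plus arithmetic--harmonic means) never reaches the eigenvector statement that the argument actually needs, and as written the squeeze cannot be completed.
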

In the case of semi-circular systems, the self-adjoint operator associated to the Dirichlet form $\E$ is unitarily equivalent to the Number operator on the Free Fock space, it generates the Free Ornstein-Uhlenbeck semi-group and a logarithmic Sobolev inequality holds true (see [Bia2]).

\section{Dirichlet forms on standard forms of von Neumann algebras}
The theory of noncommutative Dirichlet forms illustrated so far has been introduced by S. Albeverio and R. Hoegh-Krohn and developed independently by J.-L. Sauvageot [S1,2,3,4,5,6,7] and by M. Lindsay and E.B. Davies [DL1,2]. We have seen that it can be applied to several fields in which the relevant algebra of observables, to retain a physical language, is no more commutative but it requires, however, that  the reference weight or state is a trace.\\
In this section we describe the extension of the theory to cases in which the reference functional is a normal state on a von Neumann algebra. In a forthcoming section we will describe how this theory may be used to study KMS-symmetric semigroups on C$^*$-algebras as it is required for applications to Quantum Statistical Mechanics, Quantum Field Theory, Quantum Probability and Noncommutative Geometry. Notice that by a fundamental result of G.F. Dell'Antonio [Del], the von Neumann algebras appearing in Quantum Field Theory are typically of type $III$ so that no normal trace is available on them.\\
The exposition is based on the approach given in [Cip1,3] which work on general standard forms of $\sigma$-finite von Neumann algebras. An approach based on the Haagerup standard form [H2] of von Neumann algebras is given in [GL1,2]. This last one has been generalized in [GL3] to consider the reference positive functional on a von Neumann algebra to be a weight. In this respect one ought to consult also [SV Appendix] for the correction of some of the results in [GL3].
\vskip0.2truecm
The Potential Theory developed by A. Beurling and J. Deny and in particular the one to one correspondence between Dirichlet forms and symmetric Markovian semigroups on a measured space $(X,m)$, relies on the geometric properties of the cone $L^2_+(X,m)$ of positive functions in the Hilbert space $L^2(X,m)$. This  is a {\it closed, convex cone}  which is {\it self-polar} in the sense that
\[
a\in L^2_+(X,m)\quad{\rm if\,\,and\,\,only\,\,if}\quad (a|b)\ge 0\quad {\rm for\,\,all}\quad b\in L^2_+(X,m).
\]
The theory of noncommutative Dirichlet forms developed by S. Albeverio and R. Hoegh-Khron on C$^*$-algebras endowed with a faithful, semifinite trace $(A,\tau)$ is based on analogous properties of the cone $L^2_+(A,\tau)$ defined as the closure in the GNS Hilbert space $L^2(A,\tau)$ of the cone $\{a\in A_+:\tau(a)<+\infty\}$. This cone determines, in particular, an anti-unitary involution $J_\tau$ on $L^2(A,\tau)$ which extends the isometric involution $a\mapsto a^*$ of $A$ to the von Neumann algebra $L^\infty(A,\tau)$. The whole structure $(L^\infty(A,\tau), L^2(A,\tau), L^2_+(A,\tau), J_\tau)$ realizes {\it the standard form} of the von Neumann algebra $L^\infty(A,\tau)$ in the following sense.
\begin{defn}(Standard form of a von Neumann algebra) ([Ara], [Co1], [H1]).\\
A {\it standard form} $(\M ,\H , \P, J)$ of a von Neumann algebra $\M$ acting on a Hilbert space $\H$, consists of a {\it self-polar cone} $\P\subset\H$ and an anti-unitary involution $J$, satisfying:
\vskip0.2truecm\noindent
i) $J\M J=\M^\prime$;
\vskip0.2truecm\noindent
ii) $JxJ = x^*\, ,\qquad \forall x\in \M\cap \M^\prime\quad$ (the center of $\M$);
\vskip0.2truecm\noindent
iii) $J\eta = \eta\, ,\qquad \forall\eta\in \P$;
\vskip0.2truecm\noindent
iv) $xJxJ (\P )\subseteq \P\, ,\qquad\forall x\in \M\, .$
\end{defn}\noindent
The $J$-{\it real part} of $\H$ is defined as $\H^J:=\{\xi\in\H:J\xi=\xi\}$ and one has the decomposition $\H=\H^J\oplus i\H^J$. Moreover, one may define the {\it positive part} $\xi_+\in\P$ of $J$-real vector $\xi\in\H^J$ as the Hilbert projection of $\xi$ onto the positive cone $\P$, its {\it negative part} $\xi_-\in\P$ as the difference $\xi_-:=\xi-\xi_+$ and its modulus by $|\xi|:=\xi_++\xi_-\in\P$ so that $\xi=\xi_+-\xi_-$ and $(\xi_+|\xi_-)=0$.

\subsubsection{Standard form of commutative von Neumann algebras}
One may readily checks that $(L^\infty (X,m), L^2 (X,m), L^2_+ (X,m), J )$ is a standard form of the commutative von Neumann algebra $L^\infty (X,m)$ (once the anti-unitary involution is given by the complex conjugation: $Ja=\overline{a}$) and that the above notions related to the order structure assume the familiar  meaning.

\subsubsection{Hilbert-Schmidt standard form}
A noncommutative example is provided by Hilbert-Schmidt standard form
\[
(B (h)\, , L^2 (h)\, , L^2_+ (h)\, ,J)
\]
of the algebra $B (h)$ of all bounded operators on a Hilbert space $h$. In the Hilbert space $L^2 (h)$ of all Hilbert-Schmidt operators on $h$, the cone $L^2_+ (h)$ of the positive ones is self-polar and the involution $J$ associates to the Hilbert-Schmidt operator $a$ its adjoint $a^*$.
\vskip0.2truecm\noindent
Essential properties of the standard form of a von Neumann algebra are its {\it existence and uniqueness} (modulo unitaries preserving the positive cones). These properties authorize to denote the standard form of a von Neumann algebra $M$ simply by
\[
(M,L^2(M),L^2_+(M),J).
\]
These main results are also of pratical use because different standard forms may show different advantages (or inconveniences). In the commutative case uniqueness is a reflection of the fact that the von Neumann algebra $L^\infty (X,m)$ is determined by the class of zero $m$-measure sets only, so that the algebra can be represented standardly on the space $L^2(X,m')$ of any measure $m'$ equivalent to $m$.

\subsubsection{Standard form of semifinite von Neumann algebras}
In case the von Neumann algebras $\M$ is semifinite, a standard form may be constructed by the GNS representation associated to a normal, semifinite trace $\tau$ on $M$ as
\[
(M, L^2(M,\tau), L^2_+(M,\tau), J_\tau).
\]

To construct the standard form of a von Neumann algebra $M$ starting from a normal state $\o_0\in M_{*+}$ one need to recall some aspects of the Tomita-Takesaki Modular Theory of von Neumann algebras [T1,2]. We may assume that $M$ is represented in a Hilbert space $\H$ so that $M\subseteq B(\H)$ and $\o_0$ is represented by a cyclic and separating vector $\xi_0\in h$ as $\o_0(x)=(\xi_0|x\xi_0)_\H$ for $x\in M$ (for example, $\H$ can be assumed to be the GNS space $L^2(M,\o_0)$). The anti-linear map $S(x\xi_0):=x^*\xi_0$, densely defined on
$M\xi_0\subseteq \H$, is a closable operator on $\H$ and we may consider the polar decomposition of its closure $\bar S$
\[
\bar S=J\Delta_0^{1/2}
\]
where the square root of the self-.adjoint {\it modular operator} $\Delta_0=\bar{S}^*\bar{S}$ provides its positive part and the {\it modular conjugation} $J$ is an anti-unitary operator on $\H$ providing the phase. Using these tools one proves that
\[
\P:=\overline{\{xJxJ\in \H:x\in M\}}
\]
is a self-polar cone in $\H$ coinciding with $\overline{\Delta_0^{1/4}M_+\xi_0}$ and that $(M,\H,\P,J)$ is a standard form. When $\o_0$ is a trace state then $S$ is isometric so that the modular operator $\Delta_0$ reduces to the identity, $S=J$ and $\P=\overline{M_+\xi_0}$. The modular operator $\Delta_0$ measures how much the state $\o_0$ differs from a trace state in that only in this case $\Delta_0$ reduces to the identity.
\vskip0.2truecm\noindent
The denomination {\it modular} used for the operator $\Delta_0$ originates from the following example.

\subsubsection{Modular operator and standard form of group von Neumann algebra}
Let $(G,m_H)$ be a locally compact group and consider the convolution algebra $C_c(G)$ acting by left convolution $\lambda_G$ on $L^2(G,m_H)$ and define the {\it group von Neumann algebra} $L(G)$ as $\lambda_G(C_c(G))''$. The Haar measure $m_H$ determines an additive, homogeneous, lower semicontinuous functional $\o_H$ on the positive part $L(G)_+$, called the {\it Plancherel weight} (see [T3 Chapter VII]).
It is a trace if and only if $G$ is unimodular and a trace state if and only if $G$ is discrete. Since on $L(G)$ the involution is determined by $\lambda_G(a)^*:=\lambda_G(a^*)$ where $a^*(s):=\overline{a(s^{-1})}$ for $s\in G$ and $a\in C_c(G)$, one may check that the modular operator $\Delta_H$  on $L^2(G,m_H)$ associated to the Plancherel weight $\o_H$ is given by the multiplication operator by the {\it modular function} $G\ni s\mapsto dm_H(\cdot s^{-1})/dm_H$.

\subsubsection{Modular operators and Gibbs states}
On the von Neumann algebra $B(h)$ any normal state $\o_0$ can be represented by a self-adjoint, positive, compact operator $\rho\in B(h)$ having unit trace, called {\it density matrix}, as follows
\[
\o_0(x)={\rm Tr\,}(x\rho)\qquad x\in B(h).
\]
setting $H:=-\ln\rho$ we have $\rho=e^{-H}$ so that $\o_\rho$ appears as the Gibbs equilibrium state of the dynamical system whose time evolution is given by the automorphisms group
\[
\alpha_t(x)=e^{-itH}xe^{+itH}\qquad x\in B(h)
\]
generated by the Hamiltonian $H$.
We now use the Hilbert-Schmidt standard form of $B(h)$ to compute the action of the modular operator. Since $\o_0(x)={\rm Tr\,}(x\rho)={\rm Tr\,}(\rho^{1/2}x\rho^{1/2})=(\rho^{1/2}|x\rho^{1/2})_{L^2(h)}$ we have that compact operator $\xi_0=\rho^{1/2}\in L^2(h)$ in the Hilbert-Schmidt class is the cyclic and separating vector representing $\o_0$. To recover the action of the modular operator notice that, by definition, we have $J\Delta^{1/2}_0 (x\xi_\rho ) = x^*\xi_\rho$ for $x\in\B (h)$. Then $\Delta^{1/2}_0 (x\rho^{1/2})=J(x^*\rho^{1/2})=\rho^{1/2}x$ for all $x\in\B (h)$ so that
\[
\Delta^{1/2}_0 \xi = \rho^{1/2}\xi\rho^{-1/2}\qquad \xi\in D(\Delta^{1/2}_0 ):=\{\eta\in L^2 (h) : \rho^{1/2}\eta\rho^{-1/2}\in L^2 (h)\}.
\]
Notice that $(xJxJ)(\xi)=x\xi x^*$ for all $x\in B(h)$ and $\xi\in L^2(h)$ so that $\P=L^2_+(h)$.

Another crucial property of the standard form is that any normal state $\o\in\M_{*+}$ can be represented as the vector state of a unique, unit vector $\xi_\o\in\P$ in the standard positive cone, i.e. $\o(x)=(\xi_\o|x\xi_\o)_\H$ for all $x\in\M$.

\subsection{Tomita-Takesaki Theory and Connes' Radon-Nikodym Theorem}([T1,2], [Co2]).
Let $(M,\o)$ be a von Neumann algebra with a faithful, normal state and denote by $\pi_\o:M\to B(L^2(M,\o))$ the associated GNS representation. The Tomita-Takesaki Theorem then ensure that
\[
J_\o \pi_\o(M) J_\o=\pi_\o(M)',
\]
\[
\Delta_\o^{-it}\pi_\o(M)\Delta_\o^{it}=\pi_\o(M)\qquad t\in\R.
\]
Moreover, setting
\[
\sigma^\o_t:M\to M\qquad \sigma^\o_t (x):=\pi_\o^{-1}(\Delta_\o^{-it}\pi_\o(x)\Delta_\o^{it})\qquad x\in M,\qquad t\in\R
\]
one gets a w$^*$-continuous group $\sigma^\o\in {\rm Aut}(M)$ of automorphisms of the von Neumann algebra that {\it satisfies and  it is uniquely determined} by the following {\it modular condition}
\[
\o(x\sigma^\o_{-i}(y))=\o(yx)
\]
for all $x,y\in M$ which are analytic with respect to $\sigma^\o$. A fundamental theorem due to A. Connes [Co0 Theorem 1.2.1], which has to be considered as the noncommutative generalization of the Radon-Nikodym Theorem, states that the modular automorphism group of a von Neumann algebra is essentially unique: for any pair $\phi,\psi\in M_{*+}$ of faithful, normal states on $M$, there exists a canonical 1-cocycle $u:\R\to\mathcal{U}(M)$ for $\sigma^\phi_t$,with values in the unitary group of $M$
\[
u_{t_1+t_2}=u_{t_1}\sigma^\phi_{t_1}(u_{t_2})\qquad t_1,t_2\in\R,
\]
such that
\[
\sigma^\psi_t(x)=u_t\sigma^\phi_t(x)u_t^*\qquad x\in M,\quad t\in\R.
\]
\subsubsection{Modular operators on type I factors}
In case of the von Neumann algebra $\B (h)$ and the normal state $\o (x):= {\rm Tr} (\rho x)$ associated to a positive, trace-class operator $\rho\in \B(h)$ with unit trace, one checks that the modular group is given by
\[
\sigma_t^\o (x)=\rho^{it}x\rho^{-it}\, ,\qquad x\in \B (h)\, ,\qquad t\in\mathbb{R}
\]
and that the modular condition follows from the trace property of $\rm Tr$
\[
\o(yx)={\rm Tr}(\rho yx)={\rm Tr}(\rho x \rho y\rho^{-1})=\o(x\sigma^\o_{-i}(y)).
\]
In the particular case of a matrix algebra $M_n(\mathbb{C})$, denoting by $e_{jk}$ the matrix units, if the density matrix $\rho$ is diagonal with eigenvalues $\lambda_1,\cdots,\lambda_n>0$, one has
\[
\sigma^\o_t(e_{jk})=\Bigl(\frac{\lambda_j}{\lambda_k}\Bigr)^{it}e_{jk}\qquad j,k=1,\cdots,n,\quad t\in\R.
\]

\subsection{Symmetric embeddings}([Ara], [Cip1]).
In the commutative case, the standard form of a probability space $(X,m)$, we have the natural embeddings $L^\infty(X,m)\subseteq L^2(X,m)$, $L^2(X,m)\subseteq L^1(X,m)$ and $L^\infty(X,m)\subseteq L^1(X,m)$.\\
These may be generalized to the standard form of any von Neumann algebra $M$, using the modular operators associated to any fixed faithful normal state $\o\in M_{*+}$.
\begin{defn} (Symmetric embeddings) The {\it symmetric embeddings} associated to the standard form $(M ,\H , \P , J)$ and a cyclic and separating vector $\xi_\o\in \P$ are defined as follows:
\vskip0.2truecm\noindent
i) $i_\o :M\to\H\qquad i_\o (x):= \Delta_\o^{1/4}x\xi_\o\, ,\qquad x\in M$;
\vskip0.2truecm\noindent
ii) $i_{\o*} :\H\to M_*\qquad \langle i_{\o*}(\xi), y\rangle = (i_\o (y^*) |\, \xi )=(\Delta_\o^{1/4} y^* \xi_\o |\, \xi )\, ,\qquad \xi\in\H\, ,\quad y\in  M$;
\vskip0.2truecm\noindent
iii) $j_\o :M\to M_*\qquad \langle j_\o (x) , y\rangle=  (J_\o y\xi_\o |x\xi_\o)\, ,\qquad x,y\in M$.
\vskip0.2truecm\noindent
These maps are well defined because $\M\xi_\o\subseteq D(\Delta_\o^{1/2})$, by the very definition of the modular operator, and because $D(\Delta_\o^{1/2})\subseteq D(\Delta_\o^{1/4})$ by the Spectral Theorem.
\end{defn}\noindent
The essential feature of these embeddings is that they preserve the order structures of $\M$, $\H$ and $M_*$ provided by the positive cones of these spaces. In particular $i_\o$ establishes a one to one homeomorphism between the set $[0,1_M]:=\{x=x^*\in M:0\le x\le 1_M\}\subset M_+$ and its image $i_\o([0,1_M])=\{\xi\in\P:0\le \xi\le\xi_\o\}:=[0,\xi_\o]\subset\P$.
\vskip0.2truecm\noindent
In the following we will indicate by $\xi\wedge \xi_0$ the Hilbert projection onto the closed, convex set $\{\xi\in\H^J:\xi\le\xi_\o\}$ of a $J$-real vector $\xi\in\H^J$. In the commutative case and when $\xi_\o$ is given by the constant function $1$ and $a\in L^2_\R(X,m)$ is a real function, then $a\wedge 1$ reduces to the so called {\it unit contraction} of $a$ given by $(a\wedge 1)(x)=\inf (a(x),1)$ for $x\in X$. Using this geometric operation we may rephrase on the standard form of any von Neumann algebra $M$, the Markovianity of Dirichlet forms one considers in the commutative setting $L^\infty(X,m)$.

\begin{defn}(Dirichlet forms [Cip1]).
Let $(M,\H,\P,J)$ be a standard form of a von Neumann algebra $M$ and $\xi_\o\in \P$ a cyclic and separating vector representing the normal state $\o\in M_{*+}$.\par\noindent
A quadratic form $\E : \H\to (-\infty ,+\infty]$ is said to be {\it $J$-real} if
\[
\E[J\xi]=\E[\xi]\qquad \xi\in\H
\]
and {\it Markovian} with respect to $\xi_\o$ if it is $J$-real and
\begin{equation}
\E[\xi\wedge \xi_\o]\le\E[\xi]\qquad \forall\, \xi\in \H^J\, .
\end{equation}
In case $\E[\xi_\o]=0$, the Markovianity condition is equivalent to
\[
\E(\xi_+|\xi_-)\le 0\qquad \xi=J\xi\in\H.
\]
A {\it densely defined, lower semicontinuous Markovian form is called a Dirichlet form} with respect to $\xi_\o$ or $\o$.\\
The quadratic form $\E$ is called a {\it completely Dirichlet form} if any of its matrix extension  $\E_n$ on $\H\otimes L^2(\mathbb{M}_n(\mathbb{C})$, given by
\[
\E_n[[a_{i,j}]_{i,j=1}^n]=\sum_{i,j=1}^n\E[a_{i,j}]\qquad [a_{i,j}]_{i,j=1}^n\in\H\otimes L^2(\mathbb{M}_n(\mathbb{C}),
\]
is a Dirichlet form w.r.t. the state $\o_0\otimes\tau_n$, where $\tau_n$ is the unique tracial state on $\mathbb{M}_n(\mathbb{C})$.
\end{defn}
In particular, if $\E$ is Markovian and $\E[\xi]$ is finite then $\E[\xi\wedge\xi_\o]$ is finite too. Also, in the commutative setting Dirichlet forms are automatically completely Dirichlet forms.
In other words, under the Hilbertian projection $\xi\mapsto \xi\wedge\xi_\o$, the value of the quadratic form does not increase. As noticed above, this definition reduces to the usual one in the commutative setting. We are going to see that in any standard form, Dirichlet forms represent an infinitesimal characterization of strongly continuous, symmetric Markovian semigroups.
\begin{thm} (Characterization of Markovian semigroups by Dirichlet forms [GL1,2], [Cip1].)\\
 Let $(M \, ,\H\, , \P \, J)$ be a standard form of a von Neumann algebra $M$ and $\xi_\o\in \P$
the cyclic vector representing a state $\o\in M_{*+}$. Let $\{T_t : t\ge 0\}$ be a $J$-real, symmetric, strongly continuous, semigroup on the Hilbert space $\H$ and $\E : \H\to (-\infty,+\infty]$ the associated $J$-real, lower semibounded, closed  quadratic form. Then, the following properties are equivalent
\vskip0.2truecm\noindent
i) $\{T_t : t\ge 0\}$ is Markovian with respect to $\xi_\o$;
\vskip0.2truecm\noindent
ii) $\E$ is a Dirichlet form with respect to $\xi_\o$.
\vskip0.2truecm\noindent
In particular, Dirichlet forms are automatically nonnegative and Markovian semigroups are automatically contractive.
\end{thm}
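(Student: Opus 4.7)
The plan is to bridge the two notions of Markovianity through the closed convex set $C := [0,\xi_\o] = \{\eta \in \H^J : 0 \le \eta \le \xi_\o\}$, whose Hilbert projection I will denote $P_C$, given by $P_C \eta = (\eta_+)\wedge\xi_\o$. Markovianity of $\{T_t\}$ in the sense of preserving $[0,\xi_\o]$ is exactly the invariance $T_t(C)\subseteq C$, and I will argue this is equivalent to the form-theoretic inequality $\E[P_C\eta]\le\E[\eta]$ for all $\eta\in\H^J$. A preliminary step upgrades the unit contraction $\E[\xi\wedge\xi_\o]\le\E[\xi]$ to this full projection inequality: combining reality of $\E$ with a scaling argument---applying the hypothesis to $\lambda\xi$ and letting $\lambda\to+\infty$---one extracts $\E[\xi_+]\le\E[\xi]$, and composing with $\cdot\wedge\xi_\o$ yields the projection inequality. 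Conversely, the single unit contraction follows from $\E[P_C\cdot]\le\E[\cdot]$ by lower semicontinuity after scaling.

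For $\E$ Dirichlet $\Rightarrow$ $\{T_t\}$ Markovian, I would argue via the resolvent $U_\alpha := (I+\alpha L)\m1$ of the nonnegative self-adjoint generator $L$ of $\{T_t\}$. For $\alpha>0$, $U_\alpha\xi$ is the unique minimizer of $F_\xi(\eta):=\E[\eta]+\alpha\m1\|\eta-\xi\|^2$ on $\H$. For $\xi\in C$ and any $\eta\in\H^J$ one has $F_\xi(P_C\eta)\le F_\xi(\eta)$, because $\E[P_C\eta]\le\E[\eta]$ by the preceding step and $\|P_C\eta-\xi\|\le\|\eta-\xi\|$ since $P_C$ is the nearest-point projection and $\xi\in C$; $J$-reality of $F_\xi$ localizes the minimizer to $\H^J$, hence to $C$. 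The exponential formula $T_t\xi=\lim_n U_{t/n}^n\xi$ and closedness of $C$ then give $T_t(C)\subseteq C$.

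For the converse $\{T_t\}$ Markovian $\Rightarrow$ $\E$ Dirichlet, I would use the approximants $\E_t[\xi] := t\m1(\xi - T_t\xi \,|\, \xi)$, symmetric nonnegative quadratic forms on $\H$ satisfying $\E[\xi]=\sup_{t>0}\E_t[\xi]$ (monotone as $t\downarrow 0$). Hence it suffices to show $\E_t[P_C\xi]\le\E_t[\xi]$ for every $t>0$. Writing $\xi=\eta+\zeta$ with $\eta:=P_C\xi\in C$ and $\zeta:=\xi-\eta$ in the normal cone to $C$ at $\eta$, a direct calculation using the self-adjointness of $T_t$ on $\H$ gives
\[
2t\,(\E_t[\xi]-\E_t[\eta]) = (\zeta \,|\, (I-T_t)\zeta) + 2(\zeta \,|\, (I-T_t)\eta);
\]
the first summand is nonnegative because $I-T_t\ge 0$, and the second is nonnegative because $T_t\eta\in C$, so $T_t\eta-\eta\in C-\eta$ pairs nonpositively with the normal vector $\zeta$.

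The main obstacle is the cross-term in the converse direction: securing nonnegativity of $(\zeta \,|\, (I-T_t)\eta)$ requires identifying the normal cone to $C$ at $\eta$ in terms of the self-polar cone $\P$, and verifying that the operation $(\cdot)_+\wedge\xi_\o$ really coincides with the metric projection $P_C$ on $\H^J$. This geometric analysis is exactly where the standard-form machinery $(M,\H,\P,J)$ and the self-polarity of $\P$ are essential, and where the argument diverges from the simpler tracial setup of Albeverio-Hoegh-Krohn, in which the positivity operations are inherited directly from the pointwise order on $A$.
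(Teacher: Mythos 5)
The survey states this theorem without proof, referring to [GL1,2] and [Cip1], so your proposal must be measured against the argument of [Cip1]; its overall architecture you have reconstructed correctly --- the variational characterization of the resolvents $(I+\alpha L)^{-1}$ to pass from form contractivity to invariance of a closed convex set, and the monotone approximants $\E_t[\xi]=t^{-1}\bigl((I-T_t)\xi\,|\,\xi\bigr)$ with the normal-cone inequality for the converse. The first genuine gap is the convex set itself. The Dirichlet condition in ii) is contractivity under $\xi\mapsto\xi\wedge\xi_\o$, the projection onto $\hat C:=\{\eta\in\H^J:\eta\le\xi_\o\}$, while you run everything on $C=[0,\xi_\o]$ and bridge the two through the identity $P_C\eta=(\eta_+)\wedge\xi_\o$. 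That identity is a vector-lattice fact and fails in a general standard form, where $\H^J$ is not a lattice and the positive part is not order monotone. Concretely, in the Hilbert--Schmidt standard form of $M_2(\mathbb{C})$ take $\xi_\o=\left(\begin{smallmatrix}1/2+\eps&0\\0&2\end{smallmatrix}\right)$ with $0<\eps<1/6$ (positive definite, hence cyclic) and $\eta=\left(\begin{smallmatrix}1/2+\eps&1\\1&2\end{smallmatrix}\right)\in\P$: then $\eta_+=\eta$, $\eta-\xi_\o=\left(\begin{smallmatrix}0&1\\1&0\end{smallmatrix}\right)$, and $(\eta_+)\wedge\xi_\o=\eta-(\eta-\xi_\o)_+=\left(\begin{smallmatrix}\eps&1/2\\1/2&3/2\end{smallmatrix}\right)$ has negative determinant, so the composition is not even in $\P$, a fortiori not in $C$, and cannot equal $P_C\eta$. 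Likewise your reverse bridge (``the unit contraction follows by lower semicontinuity after scaling'') only yields $\E[\xi_+]\le\E[\xi]$ --- that part of your scaling argument is correct --- because dilations of $[0,\xi_\o]$ Mosco-converge to $\P$, not to $\hat C$, and translating by multiples of $\xi_\o$ shifts the form values since $\E[\xi_\o]$ need not vanish. The repair, and what [Cip1] does, is to run both of your arguments directly on $\hat C$: the resolvent argument gives $T_t\hat C\subseteq\hat C$; your own scaling device ($\lambda\xi\le\xi_\o$ for all $\lambda>0$ iff $\xi\le 0$) then yields positivity preservation and hence invariance of $[0,\xi_\o]$; conversely, invariance of $[0,\xi_\o]$ upgrades to invariance of $\hat C$ using positivity preservation, boundedness of $T_t$ and the density $\P=\overline{\bigcup_{c>0}[0,c\,\xi_\o]}$ --- which is exactly where the cyclicity of $\xi_\o$, a hypothesis your proposal never uses, enters.

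The second gap is a circularity in the converse direction: you justify $(\zeta\,|\,(I-T_t)\zeta)\ge 0$ ``because $I-T_t\ge 0$'', but the hypothesis only gives a lower semibounded form, so a priori $\|T_t\|\le e^{tb}$ and $I-T_t$ is not positive; contractivity of Markovian semigroups is precisely one of the \emph{conclusions} of the theorem. (The monotone convergence $\E_t\uparrow\E$ as $t\downarrow 0$ does survive lower semiboundedness, since $(1-e^{-t\lambda})/t$ decreases in $t$ for every real $\lambda$; the quadratic term in $\zeta$ does not.) One must first prove the operator-level lemma that a $J$-real, symmetric, bounded operator leaving $\hat C$ invariant is automatically a contraction --- again via the cyclic vector, through the induced positive unital map on $M$ --- after which $L\ge 0$, $\E\ge 0$, and your $\E_t$ computation goes through, with the cross term handled exactly as you propose via the normal cone at the projection. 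Your factor $2t$ on the left of the displayed identity should be $t$, but that is cosmetic.
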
\noindent
Dirichlet forms not only determine and are determined by strongly continuous Markovian semigroups on the standard Hilbert space, but they are also in one-to-one correspondence with point-weak*-continuous, completely positive, subunital (abbreviated with Markovian) semigroups on the von Neumann algebra, satisfying a certain {\it modular symmetry property} which is a deformation of the modular condition.

\begin{thm} (Markovian semigroups on standard forms of von Neumann algebras [Cip1]).
Let $(\M \, ,\H\, , \P \, J)$ be a standard form of a von Neumann algebra $\M$, $\o\in M_{*+}$ a faithful state and $\xi_\o\in \P$ its representing cyclic vector. Then there exists a one-to-one correspondence between
\vskip0.1truecm\noindent
i) Markovian (with respect to $\xi_\o$) semigroups $\{T_t:t>0\}$ on $L^2(M)$ and
\vskip0.2truecm\noindent
ii) Markovian semigroups $\{S_t:t>0\}$ on $M$ which are {$\o$-modular symmetric} in the sense that, for all $x,y$ in a weak*-dense, ${\bf \sigma^\o}$-invariant ,$^*$-subalgebra of $M_{\sigma^\o}$, one has
\begin{equation}
\o \Bigl(y S_t (x)\Bigr)=\o \Bigl(\sigma^\o_{\frac{i}{2}}(x)S_t (\sigma^\o_{-\frac{i}{2}}(y))\Bigr).
\end{equation}
The correspondence is provided by the symmetric embedding through the relation
\[
i_\o:M\to L^2(M)\qquad i_\o\circ S_t=T_t\circ i_\o.
\]
\end{thm}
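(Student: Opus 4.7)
The plan is to use the symmetric embedding $i_\omega:M\to L^2(M)$, $i_\omega(x)=\Delta_\omega^{1/4}x\xi_\omega$, as the bridge between the two semigroup structures. Its essential properties, already recorded above, are that it has dense range (since $\xi_\omega$ is cyclic and $\Delta_\omega^{1/4}$ has dense range), it is injective (since $\xi_\omega$ is separating), and it restricts to an order homeomorphism of the operator interval $[0,1_M]\subset M_+$ onto the positive cone interval $[0,\xi_\omega]\subset \mathcal{P}$. Both implications will be reduced to these facts combined with the Tomita--Takesaki identities $\Delta_\omega^{1/2}x\xi_\omega=J_\omega x^*\xi_\omega$ and $\Delta_\omega^{it}\pi_\omega(x)\Delta_\omega^{-it}=\pi_\omega(\sigma^\omega_{-t}(x))$.

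\emph{Direction (ii)$\Rightarrow$(i).} Given an $\omega$-modular symmetric Markovian semigroup $\{S_t\}$ on $M$, first define $T_t$ on $i_\omega(M_a)$ where $M_a\subseteq M$ is the prescribed weak${}^*$-dense, $\sigma^\omega$-invariant ${}^*$-subalgebra of $\sigma^\omega$-analytic elements, by $T_t(i_\omega(x)):=i_\omega(S_tx)$. Symmetry of $T_t$ on $L^2(M)$ is derived by unfolding
\[
(i_\omega(x)\mid i_\omega(y))_{L^2}=(\Delta_\omega^{1/2}x\xi_\omega\mid y\xi_\omega)=(J_\omega x^*\xi_\omega\mid y\xi_\omega)
\]
and using analytic continuation $\Delta_\omega^{1/4}x\xi_\omega=\sigma^\omega_{i/4}(x)\xi_\omega$ to express both $(T_t i_\omega(x)\mid i_\omega(y))$ and $(i_\omega(x)\mid T_t i_\omega(y))$ as values of $\omega$ on products of $\sigma^\omega$-translates of $x,y$ and $S_t$ applied to them; the modular symmetry condition of the theorem is then precisely what equates the two. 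Contractivity of $T_t$ follows from the Kadison--Schwarz inequality $S_t(x)^*S_t(x)\leq S_t(x^*x)$, valid since $S_t$ is $2$-positive and subunital, which yields $\|T_t i_\omega(x)\|_{L^2}\le \|i_\omega(x)\|_{L^2}$. Density of $i_\omega(M_a)$ gives a bounded self-adjoint extension to $L^2(M)$, the semigroup law is inherited, and strong continuity follows from point-weak${}^*$-continuity of $\{S_t\}$ combined with the norm boundedness of the approximants. Finally, $T_t([0,\xi_\omega])\subseteq [0,\xi_\omega]$ because $S_t([0,1_M])\subseteq [0,1_M]$ and $i_\omega$ is an order homeomorphism, so $T_t$ is Markovian in the sense of the preceding theorem.

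\emph{Direction (i)$\Rightarrow$(ii).} Conversely, given a Markovian semigroup $\{T_t\}$ on $L^2(M)$, Markovianity yields $T_t([0,\xi_\omega])\subseteq[0,\xi_\omega]$. Since every $x\in M$ is a $\C$-linear combination of four elements of $[0,\|x\|\cdot 1_M]$, $T_t$ preserves the linear subspace $i_\omega(M)\subset L^2(M)$, and $S_t(x):=i_\omega^{-1}(T_t\,i_\omega(x))$ defines a positivity preserving, subunital linear map on $M$ via the order isomorphism $i_\omega^{-1}:[0,\xi_\omega]\to[0,1_M]$. The semigroup law descends from $\{T_t\}$; the $\omega$-modular symmetry condition is obtained by reversing the calculation of the previous direction, now using the self-adjointness of $T_t$ on $L^2(M)$ together with the analytic characterization of $\sigma^\omega$. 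Point-weak${}^*$-continuity of $\{S_t\}$ is extracted from strong continuity of $\{T_t\}$ via the auxiliary embeddings $i_{\omega*}:L^2(M)\to M_*$ and $j_\omega:M\to M_*$. Complete Markovianity follows by applying the whole argument to $(M\otimes\mathbb{M}_n(\mathbb{C}),\omega\otimes\tau_n)$ for each $n\ge1$, whose standard form, modular data and self-polar cone decompose naturally through the tensor product.

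\emph{Main obstacle.} The delicate step is the precise algebraic-analytic identification of the $\omega$-modular symmetry condition with the self-adjointness of $T_t$ on $L^2(M)$. This requires working inside $M_a$ so that expressions $\sigma^\omega_{\pm i/2}(x)$ and $\sigma^\omega_{\pm i/4}(x)$ are well defined, and combining the intertwining $\Delta_\omega^{it}\pi_\omega(M)\Delta_\omega^{-it}=\pi_\omega(M)$ with $J_\omega\Delta_\omega^{1/2}x\xi_\omega=x^*\xi_\omega$ by analytic continuation in the strip $|\mathrm{Im}\,t|\le 1/2$. Once this algebraic dictionary is in place, positivity, subunitality, continuity and the matrix amplification all follow from the general order and topological properties of the symmetric embedding.
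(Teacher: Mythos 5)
The skeleton you chose is the right one — the paper itself states this theorem without proof, citing [Cip1], and the argument there is indeed organized around the symmetric embedding $i_\o$, the order homeomorphism $[0,1_M]\to[0,\xi_\o]$, and the translation of modular symmetry into $L^2$-self-adjointness exactly as you describe. Your direction (i)$\Rightarrow$(ii) and the identification of the symmetry conditions are sound in outline. But there is a genuine gap at the single most delicate analytic point of the whole theorem: the contractivity (indeed, the mere boundedness) of $T_t$ in direction (ii)$\Rightarrow$(i).

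Kadison--Schwarz plus $\o\circ S_t\le\o$ gives $\o\bigl(S_t(x)^*S_t(x)\bigr)\le\o(x^*x)$, i.e.\ $\|S_t(x)\xi_\o\|\le\|x\xi_\o\|$ — contractivity with respect to the \emph{GNS} embedding $x\mapsto x\xi_\o$, not with respect to $i_\o(x)=\Delta_\o^{1/4}x\xi_\o$. In the non-tracial case these norms are genuinely different: $\|i_\o(x)\|^2=(x\xi_\o\,|\,\Delta_\o^{1/2}x\xi_\o)=\o\bigl(x^*\sigma^\o_{i/2}(x)\bigr)$ in the paper's sign convention, and the Schwarz inequality says nothing about it. Note that the paper's own Remark following the theorem stresses precisely this distinction: working with the GNS embedding ($\alpha=0$ cone) would force the semigroup to commute with the modular group, which is what the $\alpha=1/4$ theory is designed to avoid; if contractivity transferred as easily as your argument suggests, that distinction would evaporate. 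The actual mechanism in [Cip1] (and in [GL1,2]) is that modular symmetry makes $S_t$ on $M$ and its preadjoint on $M_*$ (reached through $j_\o$, $i_{\o*}$) a compatible pair of contractions, and $L^2(M)$ with the symmetric embedding is the complex-interpolation midpoint between $M$ and $M_*$, whence $\|T_t\|_{L^2}\le 1$ by a Stein--Kosaki interpolation argument (alternatively one can use a cone-geometric lemma exploiting self-polarity of $\P$). Without this step your $T_t$ is only a densely defined operator with no a priori bound, so the self-adjoint extension, the strong continuity, and the closure argument for Markovianity that you build on top of it do not get started. Also, $\o\circ S_t\le\o$ itself is not immediate from subunitality alone when $\o$ is not a trace (since $\sigma^\o_{i/2}(x)$ is not positive for $x\ge 0$); it must be extracted from the modular symmetry by the same circle of analytic-continuation arguments, which your proposal should make explicit.
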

\begin{rem}
A careful analysis of the family of closed cones $\overline{\{\Delta_\o^\alpha x\xi_\o:x\in M_+\}}$ in $L^2(M)$ for $\alpha\in (0,1/2)$, indicates that if instead of the symmetric embedding, the so called {\it GNS embedding} $x\mapsto x\xi_\o$ of $M$ into $L^2(M)$ is used, the resulting semigroup automatically commutes with modular operator. In application to convergence to equilibrium in Quantum Statistical Mechanics this situation should be avoided and this is the reason why the self-polar cone corresponding to $\alpha=1/4$ is used.
\end{rem}
\subsubsection{Elementary Dirichlet forms}
As a first example of a Dirichlet form with respect to a not necessarily trace state, we illustrate a construction that can be considered a generalization of the one of Albeverio-Hoegh-Khron in Theorem 4.5 above. Elementary Dirichlet form will find application to approximation/rigidity properties of von Neumann algebras in Section 7.6.
\vskip0.2truecm\noindent
Let $(\M \, ,\H\, , \P \, J)$ be a standard form and $\xi_0\in \P$ a cyclic vector. Consider, for fixed $a_k\in\M$, $\mu_k\, ,\nu_k >0$ and $k=1,\dots ,n$, the operators
\[
\partial_k :\H\to\H\qquad d_k := i(\mu_k a_k -\nu_k j(a_k^*))
\]
and the bounded quadratic form
\[
\E[\xi]:=\sum_{k=1}^n\|\partial_k\xi\|^2\qquad \xi\in\H.
\]
Then $\E$ is $J$-real and $\E(\xi_+|\xi_-)\le 0$ for all $J$-real $\xi\in\H$ if and only if
\[
\sum_{k=1}^n [\mu_k^2 a_k^* a_k -\nu_k^2 a_k a_k^*]\in \M\cap\M^\prime.
\]
Moreover, if the above condition holds true, $\E$ is a Dirichlet form with $\E[\xi_0]=0$ if and only if the numbers $(\mu_k /\nu_k)^2$, $k=1,\dots ,n$, are eigenvalues of the modular operator $\Delta_{\xi_0}$, corresponding the eigenvectors $a_k\xi_0\in\H$. Conditions like the one above are considered in the framework of q-deformed CCR relations and related factor von Neumann algebras [Boz].\\
The construction above provides a, possibly unbounded, Dirichlet form even when $n=\infty$, provided $\E$ is densely defined.

\subsubsection{Quantum Ornstein-Uhlenbeck and Quantum Brownian motion semigroups} ([CFL]). We describe here the construction of a Dirichlet form, on the Neumann algebra $B(h)$, which generates a Markovian semigroups appearing in quantum optics.
\vskip0.2truecm\noindent
On the Hilbert space $h:= l^2 (\mathbb{N})$ consider the standard form $(B (h), L^2 (h), L^2_+ (h), J)$. Let $\{e_n :n\ge 0\}\subset h$ be the canonical Hilbert basis, and denote by $|\, e_m\rangle\langle e_n |$, $n,m\ge 0$, the partial isometries, having $\mathbb{C}e_n$ as initial space and $\mathbb{C}e_m$ as final one.
\vskip0.2truecm\noindent
Fix the parameters $\mu >\lambda >0$, set $\nu :=\lambda^2 /\mu^2$ and let $\o_\nu (x):={\rm Tr} (\rho_\nu x)$ the normal state on $B (h)$ represented by the density matrix
\[
\rho_\nu :=(1-\nu)\sum_{n\ge 0}\nu^n |e_n\rangle\langle e_n |\, .
\]
The state $\o_\nu$ is then represented by the cyclic vector $\xi_\nu :=\rho_\nu^{1/2}=(1-\nu)^{1/2}\sum_{n\ge 0}\nu^{n/2} |e_n\rangle\langle e_n |$. The {\it creation} and {\it annihilation operators}, $a^*$ and $a$ on $h$, are defined by
\[
a^* e_n :=\sqrt{n+1}e_{n+1}\, ,\qquad ae_n :=\left\{
                                              \begin{array}{ll}
                                                \sqrt{n}e_{n-1}, & \hbox{if $n> 0$;} \\
                                                0, & \hbox{if $n=0$.}
                                              \end{array}
                                            \right.
\]
They are adjoint of one another on their common domain $D(a)=D(a^*)=\bigl\{e\in h:\sum_{n\ge 0}\sqrt{n}|\langle e|\, e_n\rangle|^2 <\infty\bigr\}$
and satisfy the CCR
\[
a\, a^* -a^* a = I\, .
\]
The quadratic form given by
\[
\E[\xi]:=\|\mu a\xi -\lambda\xi a\|^2+\|\mu a\xi^* -\lambda\xi^* a\|^2,
\]
densely defined in $L^2 (h)$ on the subspace of finite rank operators
\[
D(\E):={\rm linear\,\, span}\{|\, e_m\rangle\langle e_n |\, ,n,m\ge 0\},
\]
is closable and Markovian so that its closure is a Dirichlet form with respect to $\o_\nu$, generating the so called {\it quantum Ornstein-Uhlenbeck} Markovian semigroup. Moreover, since, as it is easy to check one has $\E[\xi_\nu]=0$, it results that the cyclic vector is left invariant by the semigroup.
\vskip0.2truecm\noindent
When $\lambda = \mu$, the role of the invariant state $\o_\nu$ has to be played by the normal, semifinite trace $\tau$ on $B (h)$. However, even in this case, using the Albeverio-Hoegh-Khron criterion, it is possible to prove that the closure of the unbounded quadratic form
\[
\E[\xi]:=\|a\xi -\xi a\|^2+\| a\xi^* -\xi^* a\|^2\, ,\qquad\xi\in D(\E)
\]
is a Dirichlet form. The associated $\tau$-symmetric Markovian semigroup on $B (h)$, may be dilated by a Quantum Stochastic Process, known as the {\it Quantum Brownian motion}. This represents a sort of {\it bridge} between pairs of classical stochastic processes of quite different type. In fact on a suitable, invariant, maximal abelian subalgebra (masa), this semigroup reduces to the semigroup of a classical Brownian motion while on another masa, it reduces to the semigroup of a classical birth and death process.

\section{Application to approximation/rigidity properties\\ of von Neumann algebras}
In this section we describe three results showing that the spectral properties of Dirichlet forms are naturally and deeply connected with those fundamental properties of von Neumann algebras having to do with the ideas of approximation and rigidity.

\subsection{Amenable groups}In 1929 J. von Neumann discovered a far reaching explanation of the Banach-Tarski paradox in terms of a property, called {\it amenability}, of a group of Euclidean motions in $\R^n$ which holds true in dimension $n=1,2$ but it does not in higher dimensions.
\begin{defn}
A discrete group $\Gamma$ is {\it amenable} if there exist a left-translation invariant probability measure on $\Gamma$.
\end{defn}\noindent
This property is equivalent to the existence of a sequence of finitely supported, positive definite functions $\phi_n$ on $\Gamma$, pointwise converging to the constant function $1$,
\[
\lim_n \phi_n(t)=1\qquad {\rm for\,\, all\,\,} t\in\Gamma,
\]
and to the existence of a {\it proper}, conditionally negative definite function $\psi:\Gamma\to\mathbb{C}$ (see 5.1.1). Recall that a function $\phi:\Gamma\to\mathbb{C}$ is positive definite if the matrices $[\phi(s_j^{-1}s_k)]_{j,k=1}^n$ are positive definite for all $s_1\,\cdots, s_k\in\Gamma$, i.e. if for all  $c_1,\cdots c_n\in\mathbb{C}$ one has
\[
 \sum_{j,k=1}^n {\bar c_j}\phi(s_j^{-1}s_k)c_k\ge 0.
\]
Since positive definite functions are coefficients of unitary representations and the constant function $1$ is the coefficient of the trivial representation, amenability is also equivalent to the fact that the trivial representation is weakly contained in the left regular one, i.e. it is unitarily equivalent to a subrepresentation of a multiple of the regular representation.
\vskip0.2truecm\noindent
The amenability of a group $\Gamma$ can be translated in terms of a corresponding property of its associated group von Neumann algebra $L(\Gamma)$.
\vskip0.2truecm\noindent
To introduce this property in complete generality, we recall the fundamental notions of bimodule and correspondence.
\subsection{Bimodules and Connes correspondences}([Po], [Co2], [AP]).
A Banach $M$-{\it bimodule} $E$  on a C$^*$-algebra $M$ is a Banach space $E$ together with a pair of norm continuous, commuting actions of $M$.
\vskip0.2truecm\noindent
If the left action of $x\in M$ on $\xi\in E$ is denoted by $x\xi$ and the right action of $y\in M$ on $\xi\in E$ is denoted by $\xi y$, the required commutation reads $(x\xi)y=x(\xi y)$.
\vskip0.2truecm\noindent
In case $M$ is a von Neumann algebra and $E$ is a {\it dual} bimodule, in the sense that it is the dual Banach space of a predual one, the left and right actions are required to be continuous with respect to the weak*-topology of $E$.
\vskip0.2truecm\noindent
A {\it Connes correspondence} on $M$ is a Hilbert space $\H$ which is an $M$-bimodule.
\vskip0.1truecm\noindent
Denote by $M^\circ$ the {\it opposite algebra} of $M$: it coincides with $M$ as a vector space but the product is taken in the reverse order $x^\circ y^\circ :=(yx)^\circ$ for $x^\circ , y^\circ\in M^\circ$. By convention, elements $y\in M$, when regarded as elements of the opposite algebra are denoted by $y^\circ\in M^\circ$. Let $M\otimes_{\rm max}M^\circ$ the maximal tensor product of $M$ and $M^\circ$ considered as C$^*$-algebras.\\
A correspondence on $M$ is nothing but a representation
\[\pi:M\otimes_{\rm max}M^\circ\to B(\H)\qquad \pi(x\otimes y^\circ)\xi=x\xi y
\]
such that $M\ni x\mapsto \pi(x\otimes 1_M)$ and $M\ni x\mapsto \pi(1_M\otimes x^\circ)$ provide normal representations. Correspondences of von Neumann algebras may be conveniently thought of both as generalization of unitary representations of groups.
\vskip0.2truecm\noindent
Among the correspondences of von Neumann algebras, the following are of central importance.
\vskip0.1truecm\noindent
The {\it identity} or {\it standard} correspondence of a von Neumann algebra $M$ is provided by its standard representation $(M,L^2(M), L^2_+(M),J)$. Here beside the left action of $M$ on $L^2(M)$, denoted by $x\xi$ for $x\in M$ and $\xi\in L^2(M)$, we have the right  action defined by $\xi x:=Jx^*J\xi$.\\
The {\it coarse correspondence} is the $M$-bimodule given by $L^2(M)\otimes \overline{L^2(M)}$ with actions
\[
x(\xi\otimes\overline{\eta})y=x\xi\otimes \overline{\eta y}\qquad x,y\in M,\quad \xi,\eta\in L^2(M).
\]
This is also called the {\it Hilbert-Schmidt correspondence} by the identification of $L^2(M)\otimes \overline{L^2(M)}$ with the Hilbert space $HS(L^2(M))$ of Hilbert-Schmidt operator on $L^2(M)$. In this terms the actions are given by $xTy\in HS(L^2(M))$ for $x,y\in M$ and $T\in HS(L^2(M))$.
\vskip0.1truecm\noindent
Correspondences of von Neumann algebras may also be fruitfully thought as generalization of completely positive maps. In fact, suppose that on $M$ a faithful, normal state $\o$ is fixed and consider a not necessarily self-adjoint, completely Markovian map $T:L^2(M,\o)\to L^2(M,\o)$,  assuming, to simplify, that $T\xi_\o=\xi_\o$. Then the functional determined by
\[
\Phi_T:M\otimes_{\rm max}M^\circ\to \mathbb{C}\qquad \Phi_T(x\otimes y^\circ):=(i_\o(y^*)|Ti_\o(x))
\]
is a state on $M\otimes_{\rm max}M^\circ$ which, by the GNS construction, give rise to a representation of $M\otimes_{\rm max}M^\circ$, thus to a correspondence $\H_T$ on $M$. The unit, cyclic vector $\xi_T\in\H_T$ representing $\Phi_t$ thus satisfies
\[
(i_\o(y^*)|Ti_\o(x))=(\xi_T|x\xi_Ty)_{\H_T}\qquad x,y\in M.
\]
A fundamental operation that is defined on correspondences is their {\it relative tensor product}, by which any $M$-$N$-correspondence $\H_N$ and any $N$-$P$-correspondence $\K_P$ may tensorized, in this order, to produce an $M$-$P$-correspondence denoted by $\H\otimes\K_P$.
The advantages to translate into the common language of correspondences problems of apparently different origin concerning von Neumann algebras, are the possibility to let them play into a shared ground on one side, and the possibility to use the tools of representation theory, for example to introduce notions like containment, weak containment and convergence.
\subsection{Amenable von Neumann algebras}

\begin{defn} ([Co2,3,4], [CE]).
A C$^*$ or von Neumann algebra $M$ is said to be {\it amenable} if for every dual Banach $M$-bimodule $E$, all derivations $\delta:M\to X$, that is maps satisfying the Leibniz property
\[
\delta(ab)=(\delta a)b+a(\delta b)\qquad a,b\in M,
\]
are inner, i.e. there exists $\xi\in E$ such that
\[
\delta(x)=x\xi-\xi x\qquad x\in M.
\]
\end{defn}\noindent
This property was introduced by Johnson and Ringrose in their works on cohomology of operator algebras. As the result of an enormous amount of efforts, it has been shown that amenability is equivalent to approximation properties:
\vskip0.2truecm\noindent
i) a C$^*$-algebra $A$ is amenable  if and only if it is {\it nuclear} in the sense that its identity map can be approximated in the point-norm topology,
\[
\lim_n \|\psi_n\circ\phi_n (a)-a\|=0\qquad {\rm for\,\, all\,\,} a\in A,
\]
by the composition of suitable contractive, completely positive maps
\[
\psi_n:A\to \mathbb{M}_{k_n}(\mathbb{C})\qquad \phi_n:\mathbb{M}_{k_n}(\mathbb{C})\to A;
\]
ii) a von Neumann algebra $M$ is {\it weakly nuclear} if and only if its identity map can be approximated in the point-ultraweak topology,
\[
\lim_n \eta (\psi_n\circ\phi_n (a)-a)=0\qquad {\rm for\,\, all\,\,} a\in A,\quad \eta\in M_*,
\]
by the composition of suitable contractive, completely positive maps
\[
\psi_n:A\to \mathbb{M}_{k_n}(\mathbb{C})\qquad \phi_n:\mathbb{M}_{k_n}(\mathbb{C})\to A;
\]
iii) a von Neumann algebra $M$ is amenable if and only if it is {\it hyperfinite} in the sense that it is generated by an increasing sequence of finite-dimensional subalgebras.
\vskip0.2truecm\noindent
Among the examples of amenable von Neumann algebras, one may recall  i) the group von Neumann algebra of a locally compact amenable group ii) the crossed product of an abelian von Neumann algebra by an amenable locally compact group iii) the commutant von Neumann algebra of any continuous unitary representation of a connected locally compact group and iv) the von Neumann algebra generated by any representation of a nuclear C$^*$-algebra.
\subsection{Amenability and subexponential spectral growth rate of Dirichlet forms} ([CS5]).
To illustrate a first connection between approximation properties of von Neumann algebras and spectral properties of Dirichlet form, we first recall a definition.
\begin{defn}(Spectral growth rate of Dirichlet forms [CS5]).
Let $(N,\o)$ be an infinite dimensional, $\sigma$-finite, von Neumann algebra with a fixed faithful, normal state on it.
\par\noindent
Let $(\E,\F)$ be a Dirichlet form on $L^2(N,\o)$ and let $(L,D(L))$ be the associated nonnegative, self-adjoint operator. Assume that its spectrum  $\sigma(L)=\{\lambda_k\ge 0: k\in\mathbb{N}\}$ is {\it discrete}, set
\[
\Lambda_n :=\{k\in\mathbb{N}: \lambda_k\in [0,n]\}\, ,\qquad \beta_n:=\sharp (\Lambda_n)\, ,\qquad n\in\mathbb{N}
\]
and define the {\it spectral growth rate} of $(\E,\F)$ as
\[
\Omega (\E,\F):=\limsup_{n\in\mathbb{N}} \sqrt[n]{\beta_n}\, .
\]
The Dirichlet form $(\E,\F)$ is said to have
\vskip0.2truecm\noindent
{\it exponential growth} if $(\E,\F)$ has discrete spectrum and $\Omega (\E,\F)>1$
\vskip0.1truecm\noindent
{\it subexponential growth} if $(\E,\F)$ has discrete spectrum and $\Omega (\E,\F)= 1$
\vskip0.1truecm\noindent
{\it polynomial growth} if $(\E,\F)$ has discrete spectrum and, for some $c,d>0$ and all $n\in\mathbb{N}$,
\[
\beta_n\le c\cdot n^d.
\]
{\it intermediate growth} if it has subexponential growth but not polynomial growth.
\end{defn}\noindent
It is easy to see that the subexponential growth property can be formulated in terms of {\it nuclearity} of the Markovian semigroup $\{e^{-tL}: t>0\}$ on $L^2 (N,\o)$:
\begin{lem}
The Dirichlet form $(\E,\F)$ has discrete spectrum and subexponential spectral growth rate if and only if the Markovian semigroup $\{e^{-tL}: t>0\}$ on $L^2 (N,\o)$ is {\it nuclear, or trace-class}, in the sense that:
\[
{\rm Trace\, }(e^{-tL})<+\infty\qquad t>0\, .
\]
\end{lem}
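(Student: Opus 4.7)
The plan is to prove the two implications by translating the trace of $e^{-tL}$ into a Dirichlet series in the eigenvalues $\lambda_k$ of $L$ and then controlling it via the counting function $\beta_n$.

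First I would show the forward implication. Assuming discrete spectrum and subexponential growth, the trace decomposes as
\[
\mathrm{Tr}(e^{-tL})=\sum_{k\in\mathbb{N}} e^{-t\lambda_k}
= \sum_{n=0}^{\infty}\sum_{\lambda_k\in (n,n+1]} e^{-t\lambda_k}
\le \sum_{n=0}^{\infty}(\beta_{n+1}-\beta_n)\,e^{-tn}.
\]
Fix $t>0$ and choose $\eps>0$ with $(1+\eps)e^{-t}<1$. The hypothesis $\limsup\sqrt[n]{\beta_n}=1$ yields a constant $C_\eps$ such that $\beta_n\le C_\eps(1+\eps)^n$ for all $n$. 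An Abel summation (or the obvious telescoping bound $\sum (\beta_{n+1}-\beta_n)e^{-tn} = (e^{t}-1)\sum \beta_n e^{-tn}$ up to boundary terms) then reduces the claim to the convergence of $\sum_n \beta_n e^{-tn}\le C_\eps \sum_n ((1+\eps)e^{-t})^n<+\infty$, which establishes the trace-class property.

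For the reverse implication, assume $\mathrm{Tr}(e^{-tL})<+\infty$ for every $t>0$. Being trace-class, $e^{-tL}$ is in particular compact and self-adjoint, so its spectrum consists of a sequence of eigenvalues of finite multiplicity accumulating only at $0$; by the spectral mapping theorem this transfers to $L$, giving discreteness of $\sigma(L)$ with eigenvalues $\lambda_k\to+\infty$. To obtain the subexponential bound I would use the elementary comparison
\[
\beta_n \cdot e^{-tn} \;=\; \sum_{\lambda_k\le n} e^{-tn} \;\le\; \sum_{\lambda_k\le n} e^{-t\lambda_k} \;\le\; \mathrm{Tr}(e^{-tL}),
\]
valid for every $t>0$ and $n\in\N$. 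Taking $n$-th roots and letting $n\to\infty$ yields $\limsup_n\sqrt[n]{\beta_n}\le e^t$ for every $t>0$, and letting $t\downarrow 0$ gives $\Omega(\E,\F)\le 1$. The reverse inequality $\Omega(\E,\F)\ge 1$ is automatic once $\beta_n\ge 1$ eventually, which holds since $N$ is infinite dimensional and hence $\sigma(L)$ is unbounded.

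The proof is essentially a routine two–sided estimate between the Dirichlet series $\sum_k e^{-t\lambda_k}$ and the counting function $\beta_n$, so no serious obstacle is expected. The only point where one must be a little careful is the combination of facts used in the reverse direction: one needs that trace-class $\Rightarrow$ compact $\Rightarrow$ discrete spectrum of $L$ with eigenvalues tending to infinity (which requires $N$ to be infinite-dimensional to rule out the trivial finite-rank case and to guarantee $\beta_n\to\infty$, so that $\limsup \sqrt[n]{\beta_n}\ge 1$).
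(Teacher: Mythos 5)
Your proof is correct: the paper states this lemma without proof (introducing it with ``It is easy to see\dots''), and your two-sided comparison between the Dirichlet series $\sum_k e^{-t\lambda_k}$ and the counting function $\beta_n$ — Abel summation for the forward direction, the bound $\beta_n e^{-tn}\le \mathrm{Trace}\,(e^{-tL})$ followed by $n$-th roots and $t\downarrow 0$ for the converse — is precisely the routine argument being alluded to, with the functional-calculus facts (trace class $\Rightarrow$ compact $\Rightarrow$ discrete spectrum of $L$ accumulating only at $+\infty$, since $e^{-tL}$ is injective) correctly supplied. The only implicit convention you rely on, shared with the paper, is that the enumeration $\{\lambda_k\}$ counts eigenvalues with multiplicity; this is the reading required for the statement to hold (otherwise high-multiplicity eigenvalues would defeat the forward implication) and is consistent with the paper's later examples, where multiplicities $m_n$ are listed alongside the $\lambda_n$.
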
\noindent
Here is the announced connection between amenability and spectral properties.
\begin{thm}([CS5]).
Let $N$ be a $\sigma$-finite von Neumann algebra. If there exists a normal, faithful state $\o\in M_{*+}$ and a Dirichlet form $(\E,\F)$ on $L^2(N,\o)$ having subexponential spectral growth, then $N$ is amenable.
\end{thm}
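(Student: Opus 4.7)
The plan is to establish amenability of $N$ via its equivalent characterization as semidiscreteness (property (ii) in Section 7.3): the identity of $N$ must be approximable in the point-ultraweak topology by normal completely positive contractions factoring through finite-dimensional matrix algebras. By the lemma just stated, the hypothesis is equivalent to $T_t := e^{-tL}$ being trace class on $L^2(N,\o)$ for every $t>0$. The correspondence between Dirichlet forms and $\o$-modular symmetric Markovian semigroups on a standard form, established in Section 6, then delivers a point-weak$^*$-continuous Markovian semigroup $\{S_t : t>0\}$ on $N$ intertwined with $T_t$ by the symmetric embedding $i_\o$ and satisfying $S_t(x) \to x$ ultraweakly as $t\downarrow 0$ for every $x \in N$.

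First I would spectrally decompose $T_t = \sum_k e^{-t\lambda_k}|\xi_k\rangle\langle\xi_k|$ and record that trace class membership forces $\sum_k e^{-t\lambda_k}<+\infty$, so that $T_{t/2}$ is Hilbert--Schmidt and $T_t = T_{t/2}^* T_{t/2}$. Identifying the Hilbert space of Hilbert--Schmidt operators on $L^2(N,\o)$ with the coarse $N$-bimodule $L^2(N)\otimes\overline{L^2(N)}$ of Section 7.2, the matrix-coefficient identity defining the Connes correspondence $\H_{S_t}$ associated to $S_t$ realizes its cyclic vector $\xi_{S_t}$ from a Hilbert--Schmidt element, exhibiting $\H_{S_t}$ as a sub-bimodule of the coarse correspondence.

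Next I would produce the required finite-rank CP approximations by truncating the eigenseries to $T_t^{(n)} := \sum_{k\le n} e^{-t\lambda_k}|\xi_k\rangle\langle\xi_k|$ and using its factorization through the $n$-dimensional range spanned by $\{\xi_1,\dots,\xi_n\}$, which supplies a copy of $\mathbb{M}_n(\mathbb{C})$ inside $B(L^2(N,\o))$. Rather than transporting the non-Markovian $T_t^{(n)}$ to $N$ directly, I would work inside the coarse bimodule, where a Hilbert--Schmidt element canonically gives rise, via a Stinespring/Kraus-type construction in the spirit of Section 3.4, to a normal CP contraction $S_t^{(n)}:N \to N$ factoring as $N \to \mathbb{M}_n(\mathbb{C}) \to N$. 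Trace-norm convergence $T_t^{(n)} \to T_t$ in $HS(L^2(N,\o))$ transfers to ultraweak convergence $S_t^{(n)} \to S_t$ on $N$, and composing with $S_t \to \mathrm{id}_N$ via a diagonal subnet produces the semidiscrete approximation of the identity. The equivalences recalled in Section 7.3 then conclude that $N$ is amenable.

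The principal obstacle is the third step. The $L^2$-truncations $T_t^{(n)}$ are not themselves Markovian, so the standard-form correspondence cannot be applied to them; the argument has to proceed through the coarse bimodule, where a Hilbert--Schmidt vector produces a normal CP map of $N$ with an explicit factorization through a type $I$ algebra. Keeping this factorization completely positive and controlling its dimension (finite for the truncated operators, bounded in terms of the spectral counting function $\beta_n$) together with the convergence is where the trace-class estimate and the coarse-bimodule realization of $HS(L^2(N,\o))$ do all the work.
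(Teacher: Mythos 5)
Your proposal coincides with the paper's proof through its decisive step. Both arguments rest on the same three pillars: subexponential growth makes $e^{-tL}$ trace class, hence Hilbert--Schmidt, for every $t>0$; the identification of $HS(L^2(N,\omega))$ with the coarse correspondence $L^2(N,\omega)\otimes\overline{L^2(N,\omega)}\simeq L^2(N\overline{\otimes}N^\circ,\omega\otimes\omega^\circ)$ then places the semigroup inside the coarse bimodule; and complete positivity (note that both you and the theorem's statement silently need the form to be a \emph{complete} Dirichlet form, as the paper's own sketch does) makes $\Phi_t(x\otimes y^\circ):=(i_\omega(y^*)\,|\,e^{-tL}\,i_\omega(x))$ a normal state represented by a positive Hilbert--Schmidt vector $\Omega_t$, while strong continuity at $t\downarrow 0$ recovers the coefficients of the identity correspondence. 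Where you genuinely diverge is the endgame: the paper stops at weak containment of the identity correspondence in the coarse one and concludes by Popa's characterization of amenability, whereas you re-prove that implication inline, extracting semidiscrete approximations of ${\rm id}_N$ by truncating the spectral expansion of $T_{t/2}$ and factoring the resulting finite-rank kernels through $\mathbb{M}_n(\mathbb{C})$. What your route would buy is an explicit, quantitative factorization (ranks controlled by the counting function $\beta_n$) in place of an abstract citation; what it costs is redoing, in the $\sigma$-finite non-tracial setting, exactly the work Popa's theorem packages.

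And that is where your sketch has a real soft spot: the claim that a finite-rank Hilbert--Schmidt element ``canonically'' yields a normal CP \emph{contraction} $S_t^{(n)}:N\to N$ factoring through $\mathbb{M}_n(\mathbb{C})$ does not hold as stated. A truncation $T^{(n)}=\sum_{k\le n}c_k\,|\xi_k\rangle\langle\xi_k|$ only produces the positive functional $x\otimes y^\circ\mapsto\sum_{j,k\le n}c_jc_k\,(\xi_j|x\xi_k)(\xi_k|y\xi_j)$ on $N\otimes_{\rm max}N^\circ$; to convert it into a Kraus factorization $x\mapsto\sum_{j,k}c_jc_k(\xi_j|x\xi_k)\,b_jb_k^*$ one needs the vectors to be of the form $\xi_j=b_j\xi_\omega$ with $b_j\in N$, which the eigenvectors of $L$ need not be, so a density argument in $N\xi_\omega$ is required. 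Moreover, since $\omega$ is only a state and not a trace, the flip $\omega(b_k^*yb_j)=\omega(yb_j b_k^*)$ fails and modular twists of the type $\sigma^\omega_{-i/2}$ appear on the Kraus legs (compounded by the $\Delta_\omega^{1/4}$ hidden in $i_\omega$), forcing the $b_j$ to be chosen analytic for the modular group; finally, the truncated maps are neither Markovian nor normalized, so contractivity must be restored by hand. None of this is fatal --- it is the standard Connes--Popa argument --- but it is nontrivial, and it is precisely the content of the result of [Po] that the paper invokes; your write-up should either carry out these approximations explicitly or cite Popa's theorem as the paper does.
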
\noindent
Let us sketch the main points of the proof assuming, to simplify, that $\E[\xi_\o]=0$. Let $N\overline{\otimes} N^\circ$ the {\it von Neumann spatial tensor product} of $N$ and $N^\circ$. It turns out that the coarse representation of $N\otimes_{\rm max}N^\circ$, defined by
\[
\begin{split}
&\pi_{\rm co}:N\otimes_{\rm max}N^\circ\rightarrow \mathcal{B}(L^2(N,\o)\otimes L^2(N,\o)) \\
\pi_{\rm co} (x\otimes y^o)(\xi\otimes\eta)&:=x\xi\otimes\eta y\qquad x, y\in N\, ,\quad \xi, \eta\in L^2(N,\o)\, , \\
\end{split}
\]
give rise to the spatial tensor product of the von Neumann algebras

\[
(\pi_{\rm co}(N\otimes_{\rm max}N^\circ))'' = N\overline{\otimes} N^\circ .
\]
Moreover, the normal extension of the coarse representation $\pi_{\rm co}$ of $N\otimes_{\rm max}N^\circ$ to $N\overline{\otimes} N^\circ$ is the standard representation of $N\overline{\otimes} N^\circ$ so that
\[
L^2(N,\o)\otimes L^2(N,\o)\simeq L^2(N\overline{\otimes} N^\circ,\o\otimes\o^\circ)
\]
and the positive cone $L^2_+(N\overline{\otimes} N^\circ,\o\otimes\o^\circ)$ can be identified with the cone of all positive, Hilbert-Schmidt operators on $L^2(N,\o)$. In particular, since, by assumption, $e^{-tL}$ is a positive, Hilbert-Schmidt operator for all $t>0$, we have
\[
e^{-tL}\in L^2_+(N\overline{\otimes} N^\circ,\o\otimes\o^\circ)\qquad t>0.
\]
Since $\E$ is a complete Dirichlet form, its associated semigroup is completely positive and this implies that the linear functional $\Phi_t$, determined by
\[
\Phi_t :N\otimes_{\rm max} N^\circ\rightarrow\mathbb{C}\qquad \Phi_t(x\otimes y^\circ):=(i_\o (y^*)|e^{-tL} i_\o (x)),
\]
is positive and actually a state since $\E[\xi_\o]=0$. By the continuity properties of the symmetric embeddings and the above identifications, we have
\[
\Phi_t(z)=\Bigl(e^{-tL}\Bigl|i_{\o\otimes \o^\circ} (z)\Bigr)_{L^2(N\overline{\otimes} N^\circ,\o\otimes\o^\circ)}\qquad z\in N\overline{\otimes} N^\circ\, .
\]
Since $i_{\o\otimes \o^\circ}$ is positive preserving and $e^{-tL}$ is a positive element of the standard cone, we have that $\Phi_t$ is a normal state on $N\overline{\otimes} N^\circ$ and can thus be represented by a unique positive unit vector $\Omega_t\in L^2_+(N\overline{\otimes} N^\circ,\o\otimes\o^\circ)$ as
\[
\Phi_t(z)=\Bigl(\Omega_t |\pi_{\rm co}(z)\Omega_t\Bigr)_{L^2(N\overline{\otimes} N^\circ,\o\otimes\o^\circ)}\qquad z\in N\overline{\otimes} N^\circ.
\]
By the strong continuity of the Markovian semigroup $e^{-tL}$ on $L^2(N,\o)$, we then have
\[
\lim_{t\downarrow 0} \Bigl(\Omega_t |\pi_{\rm co}(z)\Omega_t\Bigr)_{L^2(N,\o)\otimes L^2(N,\o)}=(\xi_\o |\pi_{\rm id}(z)\xi_\o)=1\qquad z\in N\otimes_{\rm max}N^\circ\, .
\]
This proves that the trivial representation $\pi_{\rm id}$ of $N\otimes_{\rm max}N^\circ$, given by $\pi_{\rm id}(z):=I_{L^2(N,\o)}$ for all $z\in N\otimes_{\rm max}N^\circ$, is weakly contained in the coarse representation $\pi_{\rm co}$ and thus $N$ is amenable by a characterization of amenability due to S. Popa [Po].
\vskip0.2truecm
This approach by correspondences to relate spectral properties of Dirichlet forms to approximation properties of von Neumann algebras allows to treat also the {\it relative} case in which one deals with embeddings of subfactors $B\subset N$ on one side and with the a subexponential spectral growth rate of Dirichlet form r{\it elative to the subalgebra $B$}, on the other side. In these situations the essential spectrum of Dirichlet forms is not empty. (See [CS5]).

\subsection{Haagerup approximation property and discrete spectrum of Dirichlet forms}
The free group of two generators $\mathbb{F}_2$ is non amenable but in 1979 U. Haagerup proved in [H3] that its word-length function $\it l$ is conditionally negative definite and proper. This allowed him to prove that the group von Neumann algebra $L(\mathbb{F}_2)$ and the group C$^*$-algebra of $\mathbb{F}_2$ have the Grothendieck Metric Approximation Property. Moreover, the above properties of the length function of free groups also determine the following properties. This specific case opened the study of the following class of groups, larger than the class of amenable ones.
\begin{defn}
A countable, discrete group $\Gamma$ is said to have the {\it Haagerup Approximation Property} if there exists a sequence of positive definite functions in $c_0(\Gamma)$, uniformly convergent on compact subsets, to the constant function $1$ (see for example [CCJJV]). This property is equivalent to the existence of a {\it proper, conditionally negative definite function on $\Gamma$}.
\end{defn}\noindent
Clearly all amenable groups have the Property (H).
A series of contribution [Ch], [CS1,2], [COST] [J], allowed to isolate the following property of von Neumann algebras that for group algebras $L(\Gamma)$ of discrete groups is equivalent to the Haagerup Approximation Property of $\Gamma$.
\begin{defn}
A von Neumann algebra with standard form $(M,L^2(M),L^2_+(M),J)$ is said to have the {\it Haagerup Approximation Property} (HAP) if there exists a sequence of completely positive contractions $T_k :L^2(M)\to L^2(M)$ strongly converging to the identity operator
\[
\lim_k\|\xi-T_k\xi\|_{L^2(M)}=0\qquad \xi\in L^2(M).
\]

\end{defn}
Here is the announced connection between (HAP) and spectral properties.

\begin{thm}\label{propH}([CS1]).
Let $N$ be a $\sigma$-finite von Neumann algebra. Then the following properties are equivalent
\vskip0.1truecm\noindent
i) $M$ has the Property (HAP)
\vskip0.1truecm\noindent
ii) there exists on $L^2(M)$ a completely Dirichlet form $(\E,\F)$ with respect to some faithful, normal state $\o\in M_{*+}$,  having discrete spectrum.
\end{thm}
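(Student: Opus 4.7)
The plan is to prove the two implications separately, in each direction exploiting the correspondence of Theorem 6.5 between completely Dirichlet forms on $L^2(M)$ and completely Markovian semigroups on $L^2(M)$.

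For (ii) $\Rightarrow$ (i), let $(\E,\F)$ be a completely Dirichlet form with respect to $\o$ whose self-adjoint generator $(L,D(L))$ has discrete spectrum $\{\lambda_n\}_{n\ge 0}$ with $\lambda_n\to +\infty$ and finite multiplicities. Then for every $t>0$ the operator $e^{-tL}$ is trace-class, in particular compact. By Theorem 6.5 applied to the complete extensions, $\{e^{-tL}:t\ge 0\}$ is a completely positive, contractive semigroup on $L^2(M)$, and strong continuity yields $e^{-tL}\xi\to\xi$ as $t\downarrow 0$ for every $\xi\in L^2(M)$. Setting $T_k:=e^{-t_k L}$ for any null sequence $t_k\downarrow 0$ delivers the compact, completely positive contractions required by (HAP).

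For (i) $\Rightarrow$ (ii), start with compact completely positive contractions $T_k$ converging strongly to $I$ on $L^2(M)$. First symmetrize by replacing $T_k$ with $S_k:=\frac{1}{2}(T_k+T_k^*)$: this preserves complete positivity, compactness and strong convergence to $I$. Passing to a subsequence, arrange $\|(I-S_k)\xi_j\|\to 0$ sufficiently fast on a fixed countable total family $\{\xi_j\}\subset L^2(M)$. For positive weights $c_k\uparrow +\infty$ to be chosen, form
\[
\E[\xi]:=\sum_{k=1}^\infty c_k\,(\xi\,|\,(I-S_k)\xi),\qquad \xi\in\F,
\]
on the natural domain $\F$ of convergence. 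Each bounded summand is a completely Dirichlet form, being the quadratic form of the generator of the norm-continuous, symmetric, completely Markovian semigroup $e^{-tc_k(I-S_k)}$; monotone increasing limits of completely Dirichlet forms are again completely Dirichlet, so the closure of $\E$ is a completely Dirichlet form on $L^2(M)$.

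The hard point will be securing discreteness of $\sigma(L)$ for the generator $L$. Since each $S_k$ is compact, $I-S_k$ has essential spectrum $\{1\}$, so no finite truncation can produce a discrete spectrum; one must exploit the combined effect of infinitely many $S_k$ together with the freedom in the $c_k$. The strategy is to verify compactness of the resolvent $(I+L)^{-1}$, equivalently that the sublevel sets $\{\xi\in\F:\E[\xi]\le\Lambda,\ \|\xi\|\le 1\}$ are norm-precompact for every $\Lambda>0$. A diagonal/inductive construction, choosing the subsequence of $S_k$ and the coefficients $c_k$ in tandem, should force any $\E$-energy-bounded sequence to lie, up to arbitrarily small error, in a finite-dimensional subspace (namely the span of the top spectral projections of finitely many $S_k$), yielding a compact resolvent and hence discrete spectrum of $L$.
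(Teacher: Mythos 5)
The paper gives no proof of this statement (it is quoted from [CS1]), so your proposal has to be measured against the argument in that reference, whose overall skeleton you have in fact reproduced. Your direction (ii) $\Rightarrow$ (i) is sound up to two small repairs: discrete spectrum only gives \emph{compactness} of $e^{-tL}$, not trace-class (nothing in (ii) bounds $\sum_n e^{-t\lambda_n}$), but compactness is all that (HAP) asks — note that the compactness requirement on the $T_k$ is evidently missing from Definition 7.8 as printed (otherwise $T_k=I$ would trivialize it), and you correctly restore it. Your symmetrization in (i) $\Rightarrow$ (ii) is also fine and worth justifying explicitly: $T_k^*$ is completely positive because the standard cone is self-polar, and $T_k^*\to I$ \emph{strongly} follows from contractivity together with weak convergence, via $\|T_k^*\xi-\xi\|^2\le 2\|\xi\|^2-2\,\mathrm{Re}\,(\xi|T_k\xi)\to 0$.

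There are, however, two genuine gaps in (i) $\Rightarrow$ (ii). First, to invoke Theorem 6.5 and declare each summand $c_k(\xi|(I-S_k)\xi)$ a completely Dirichlet form \emph{with respect to} $\omega$, positivity preservation of $e^{-tc_k(I-S_k)}=e^{-tc_k}e^{tc_kS_k}$ is indeed automatic, but Markovianity with respect to $\xi_\omega$ — the contraction property under $\xi\mapsto\xi\wedge\xi_\omega$, equivalently subinvariance of the order interval $[0,\xi_\omega]$ — is not: a completely positive $L^2$-contraction need not satisfy $S_k\xi\le\xi_\omega$ whenever $0\le\xi\le\xi_\omega$. A substantial part of [CS1] (together with work of Okayasu--Tomatsu type) consists precisely in upgrading arbitrary compact c.p.\ approximants to self-adjoint \emph{Markov} operators compatible with the state; your averaging $S_k=\frac12(T_k+T_k^*)$ delivers self-adjointness but not this compatibility, and without it the sum $\E$ is not known to be Markovian. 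Second, the compact-resolvent step, which you flag as the hard point and leave as a hope, actually closes in a few lines and needs no tandem diagonal construction beyond your fast-convergence subsequence: fix $\delta\in(0,1)$; since $S_k$ is compact and self-adjoint, the spectral projection $Q_k:=1_{(1-\delta,1]}(S_k)$ is finite rank, commutes with $I-S_k$, and $I-S_k\ge\delta$ on $(1-Q_k)L^2(M)$; hence $\|\xi\|\le 1$ and $\E[\xi]\le\Lambda$ force $\delta\|(1-Q_k)\xi\|^2\le(\xi|(I-S_k)\xi)\le\Lambda/c_k$. As $c_k\to\infty$, each sublevel set $\{\|\xi\|\le1,\ \E[\xi]\le\Lambda\}$ lies within arbitrarily small distance of a finite-dimensional subspace, so it is totally bounded, the resolvent of $L$ is compact, and the spectrum is discrete; dense definedness is secured exactly as you indicate, choosing the subsequence so that $\|(I-S_k)\xi_j\|\le 4^{-k}$ for $j\le k$ on a countable total family and taking, say, $c_k=2^k$.
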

\begin{rem}
i) Property (H) may be formulated in a number of slightly different, equivalent ways also for not necessarily $\sigma$-finite von Neumann algebras too in such a way that the above spectral characterization remains anyway true.\\
ii) The construction of Dirichlet forms out of negative definite functions on groups and the above characterization of the Haagerup Approximation Property, indicate that Dirichlet forms for arbitrary von Neumann algebras play a role parallel to the one played by the continuous, negative definite functions on groups (see discussion in [CS1]).\\
\end{rem}

\subsection{Property ($\Gamma$) and Poincar\'e inequality for elementary Dirichlet forms}
Another instance of the interactions among structural properties of a von Neumann algebra $M$ and spectral properties of Dirichlet forms on $L^2(M)$ may be shown reformulating the Murray-von Neumann Property ($\Gamma$).
\vskip0.2truecm\noindent
By an {\it elementary} completely Dirichlet form on a finite von Neumann algebra $(M,\tau)$, endowed with a normal, trace state, we mean one of type
\[
\E_F[\xi]:=\sum_{x\in F}\|x\xi-\xi x\|^2_{L^2(M,\tau)}\qquad \xi\in L^2(M,\tau)
\]
for some finite, symmetric set $F=F^*\subset M$. The unit, cyclic vector $\xi_\tau\in L^2(M,\tau)$ representing the trace is central so that $\E_F[\xi_\tau]=0$ and $\lambda_0=0$ is an eigenvalue for all elementary Dirichlet forms. Elementary Dirichlet forms are everywhere defined and thus bounded.
\begin{defn}([Co1]).
A finite von Neumann algebra endowed with its normal, tracial state $(M,\tau)$, has the Property ($\Gamma$) if for any $\varepsilon >0$ and any finite set $F\subset M$ there exists a unitary $u\in M$ with $\tau (u)=0$ such that $\|(ux-xu)\xi_\tau\|_2<\varepsilon$ for all $x\in F$.
\end{defn}
This property was the first invariant introduced by  F.J. Murray and J. von Neumann [MvN] to show the existence of non hyperfinite $II_1$ factors. For example, the group von Neumann algebra $L(S_\infty)$ of the countable discrete group $S_\infty$ of finite permutations of a countable set and the Clifford von Neumann algebra of a separable Hilbert space are both isomorphic to the hyperfinite $II_1$ factor $R$, which fullfill the Property ($\Gamma$). This latter cannot be isomorphic to the group algebra $L(\mathbb{F}_n)$ of the free group $\mathbb{F}_n$ with $n\ge 2$ generators which is a $II_1$ factor but does not have the Property ($\Gamma$) (and in fact it is not hyperfinite).
\vskip0.2truecm\noindent
It is well known [Po] that the absence of the Property ($\Gamma$) for a $II_1$ factor with separable predual, is a {\it rigidity property} equivalent to the existence of a spectral gap for suitable self-adjoint, finite, convex combinations of inner automorphisms, as unitary operators on $L^2(M,\tau)$.\\
We now show how the Property ($\Gamma$) can be also naturally interpreted in terms of a spectral property of elementary Dirichlet forms.
\begin{thm}([CS6]).
A finite von Neumann algebra endowed with its normal, tracial state $(M,\tau)$, has the Property $(\Gamma )$ if and only if for any elementary completely Dirichlet form
\[
\E_F[\xi]=\sum_{x\in F}\|x\xi-\xi x\|^2_{L^2(M,\tau)}\qquad \xi\in L^2(M,\tau),
\]
associated to a finite set $F=F^*\subset M$, the eigenvalue $\lambda_0:=0$ is not isolated in the spectrum.
\vskip0.2truecm\noindent
Otherwise stated, $(M,\tau)$, does not have the Property $(\Gamma)$ if and only if there exists an elementary Dirichlet form $\E_F$ such that the eigenvalue
$\lambda_0=0$ is isolated (spectral gap) or, equivalently, $\E_F$ satisfies, for some $c_F>0$, a Poincar\'e inequality
\[
c_F\cdot\|\xi-(\xi_\tau|\xi)\xi_\tau\|^2_2\le \E_F[\xi]\qquad \xi\in L^2(M,\tau).
\]
\end{thm}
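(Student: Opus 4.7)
My plan is to prove the equivalence by two implications. The direction $(\Gamma)\Rightarrow$ failure of Poincar\'e for every $\E_F$ is immediate; the converse rests on the classical Popa-type mechanism converting $L^2$-almost-central vectors into $L^\infty$-almost-central projections.

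\smallskip
\noindent\textit{From $(\Gamma)$ to failure of Poincar\'e.} Fix any finite symmetric $F=F^*\subset M$. For each $n\in\N$, property $(\Gamma)$ supplies a unitary $u_n\in M$ with $\tau(u_n)=0$ and $\|xu_n-u_nx\|_{L^2(M,\tau)}<1/n$ for every $x\in F$. Under the GNS identification each $u_n$ is a unit vector in $L^2(M,\tau)$, orthogonal to $\xi_\tau$ since $(\xi_\tau|u_n)=\tau(u_n^*)=0$, and satisfies $\E_F[u_n]\le |F|/n^2\to 0$. Hence no inequality $c_F\|\xi-(\xi_\tau|\xi)\xi_\tau\|_2^2\le\E_F[\xi]$ can hold with $c_F>0$; equivalently, $\inf\{\E_F[\xi]:\|\xi\|_2=1,\ \xi\perp\xi_\tau\}=0$, so $\lambda_0=0$ is not isolated in $\sigma(L_F)$.

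\smallskip
\noindent\textit{From failure of Poincar\'e to $(\Gamma)$.} I argue contrapositively. Fix a finite $F\subset M$ and $\varepsilon>0$; after replacing $F$ with $F\cup F^*$ assume $F=F^*$. I must produce a unitary $u\in M$ with $\tau(u)=0$ and $\|[x,u]\|_2<\varepsilon$ for all $x\in F$. By assumption, for every $\delta>0$ the form $\E_F$ admits a unit vector $\xi\in L^2(M,\tau)\ominus\mathbb{C}\xi_\tau$ with $\E_F[\xi]<\delta^2$. Decomposing $\xi=\xi_1+i\xi_2$ into self-adjoint parts and using that $\E_F$ is $J$-real, both components satisfy the same almost-commutation bounds with $F$; renaming, we may assume $\xi=\xi^*$ with $\|\xi\|_2\ge 1/\sqrt 2$, $\tau(\xi)=0$, and $\max_{x\in F}\|[x,\xi]\|_2<2\delta$. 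The decisive step is to convert this $L^2$-almost-central self-adjoint vector into a projection of approximately half trace that almost commutes with $F$. Using the Powers--Stormer inequality and a layer-cake identity, the spectral projections $p_t:=\chi_{(t,\infty)}(|\xi|)\in M$ satisfy a standard estimate of the form
\[
\int_0^{\infty}\|[x,p_t]\|_2^2\,dt\le C\,\|[x,\xi]\|_2\cdot\|\xi\|_2 \qquad (x\in F),
\]
while $t\mapsto\tau(p_t)$ is a continuous monotone decreasing function. Since $\tau(\xi)=0$, the positive and negative parts of $\xi$ have equal trace, and a Chebyshev-type averaging argument selects a level $t^*$ for which $\tau(p_{t^*})\approx 1/2$ and $\max_{x\in F}\|[x,p_{t^*}]\|_2=O(\delta^{1/2})$. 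A small adjustment inside $M$, using the non-atomicity of the projection lattice of a $\mathrm{II}_1$ factor, produces a projection $q\in M$ with $\tau(q)=1/2$ exactly and the same almost-commutation estimate. The self-adjoint unitary $u:=2q-1$ then satisfies $\tau(u)=0$ and $\|[x,u]\|_2=2\|[x,q]\|_2<\varepsilon$ once $\delta$ is sufficiently small, contradicting the negation of $(\Gamma)$.

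\smallskip
\noindent The principal obstacle is this conversion from an $L^2$-almost-central vector to an $L^\infty$-almost-central projection of near-half trace. Controlling the commutators of spectral projections via Powers--Stormer-type inequalities and selecting the correct spectral level through an averaging argument is the heart of Popa's spectral-gap method; a secondary subtlety is the final passage from a projection of trace merely close to $1/2$ to one of trace exactly $1/2$, which in the factor setting follows from divisibility of the trace. Once this machinery is in place, both directions of the equivalence become direct translations between the operator-algebraic formulation of $(\Gamma)$ and the variational formulation via spectral properties of the elementary Dirichlet forms $\E_F$.
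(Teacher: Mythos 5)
Your first implication ($(\Gamma)\Rightarrow$ failure of every Poincar\'e inequality) is correct, and it is the same easy argument the paper leaves implicit. The problem lies in the converse. You chose to prove it directly, which forces you through the hard Connes--Popa conversion of $L^2$-almost-central vectors into trace-zero almost-central unitaries, and your sketch breaks at the level-selection step: from $\tau(\xi)=0$ you claim a ``Chebyshev-type averaging argument'' produces $t^*$ with $\tau\bigl(\chi_{(t^*,\infty)}(|\xi|)\bigr)\approx 1/2$. This is false. Take a projection $p\in M$ with $\tau(p)=\epsilon$ small and set $\xi:=\alpha p-\beta(1_M-p)$ with $\alpha\epsilon=\beta(1-\epsilon)$, normalized in $L^2(M,\tau)$; then $\xi=\xi^*$ and $\tau(\xi)=0$, but $t\mapsto\tau\bigl(\chi_{(t,\infty)}(|\xi|)\bigr)$ takes only the values $1,\epsilon,0$, so no spectral level comes anywhere near trace $1/2$ (using $\chi_{(t,\infty)}(\xi)$ instead does not help: the trace function jumps across $1/2$). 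What the Powers--Stormer-type estimate genuinely yields is an almost-central projection of \emph{some} trace in $(0,1)$, possibly tiny; upgrading that to a trace-zero unitary requires the further nontrivial input that the asymptotic centralizer $M'\cap M^\omega$ is diffuse when nontrivial, or a comparable analysis. Moreover, your appeal to ``non-atomicity of the projection lattice of a $\mathrm{II}_1$ factor'' imports a factoriality assumption that the theorem, stated for an arbitrary finite $(M,\tau)$, does not make.

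The paper sidesteps this entire machinery by proving the contrapositive of your second implication, and that is where the Dirichlet-form structure does the real work. Assuming $(M,\tau)$ fails $(\Gamma)$, one obtains $\varepsilon>0$ and a finite $F=F^*$ with $\varepsilon\,\|u\xi_\tau-\tau(u)\xi_\tau\|_2^2\le\E_F[u\xi_\tau]$ for all unitaries $u\in M$. Any self-adjoint $y$ with $\|y\|_M\le 1/\sqrt{2}$ is the average of the two unitaries $u_\pm:=y\pm i\sqrt{1_M-y^2}$; Markovianity of $\E_F$ applied to $\phi(s)=\sqrt{1-s^2}$ (whose derivative satisfies $|\phi'(s)|\le 1$ on $|s|\le 1/\sqrt2$) gives $\E_F[\sqrt{1_M-y^2}\,\xi_\tau]\le\E_F[y\xi_\tau]$, and $J$-reality then gives $\E_F[u_\pm\xi_\tau]\le 2\E_F[y\xi_\tau]$, from which the Poincar\'e inequality with $c_F=\varepsilon/8$ follows by a parallelogram-type estimate and the density of $M\xi_\tau$ in $L^2(M,\tau)$. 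So in the paper's route the only passage needed is from unitaries to general elements of $M$ --- never from vectors back to unitaries, which is exactly the step your outline cannot complete as written. Either restrict to $\mathrm{II}_1$ factors and carry out the full diffuseness analysis of central sequences to repair your direct argument, or switch to the contrapositive, where the Markovianity of the elementary Dirichlet form replaces the spectral-projection machinery altogether.
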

\begin{proof}
If $J$ denotes the symmetry on $L^2(M,\tau)$ which extends the involution of $M$, then for $u,x\in M$, setting $\xi:=x\xi_\tau\in M\xi_\tau$, we have $(ux-xu)\xi_\tau=u\xi-Ju^*x^*\xi_\tau=u\xi-Ju^*Jx\xi_\tau=u\xi-\xi u$. Since $\xi_\tau \in L^2(M,\tau)$ is cyclic, i.e. $M\xi_\tau$ is dense in $L^2(M,\tau)$, if $(M,\tau)$ does not have the Property $\Gamma$ there exists $\varepsilon>0$ and an elementary Dirichlet form $\E_F$ such that for all unitaries $u\in M$ we have
\[
\varepsilon\cdot \|u\xi_\tau-\tau(u)\xi_\tau\|^2_2\le \E_F[u\xi_\tau].
\]
For any $y=y^*\in M$ such that $\|y\|_M\le 1/\sqrt{2}$, consider the unitaries $u_\pm :=y\pm i\sqrt{1_M-y^2}$ so that $y=(u_++u_-)/2$. Since $\sqrt{1-y^2}=\phi(y)$ with $\phi(s):=\sqrt{1-s^2}$ and $|\phi'(s)|\le 1$ for $|s|\le 1/\sqrt{2}$, it follows by the Markovianity of the Dirichlet form that $\E_F[\sqrt{1-y^2}\xi_\tau]\le \E_F[y\xi_\tau]$. Since $\E_F$ is $J$-real we then have
\[
\E_F[u_\pm\xi_\tau]=\E_F[(y\pm i\sqrt{1-y^2})\xi_\tau]=\E_F[y\xi_\tau]+\E_F[\sqrt{1-y^2}\xi_\tau]\le 2\E_F[y\xi_\tau]
\]
and
\[
\varepsilon\cdot (\|u_+\xi_\tau-\tau(u_+)\xi_\tau\|^2_2 + \|u_-\xi_\tau-\tau(u_-)\xi_\tau\|^2_2)\le \E_F[u_+\xi_\tau] + \E_F[u_-\xi_\tau]\le 4\E_F[y\xi_\tau].
\]
Thus for all $y\in M$ we have
\[
\begin{split}
\|y\xi_\tau-\tau(y)\xi_\tau\|^2_2&=\|u_+\xi_\tau-\tau(u_+)\xi_\tau+u_-\xi_\tau -\tau(u_-)\xi_\tau\|^2_2 \\
&\le 2\bigl(\|u_+\xi_\tau-\tau(u_+)\xi_\tau\|^2_2 + \|u_-\xi_\tau-\tau(u_-)\xi_\tau\|^2_2\bigr) \\
&\le 8\varepsilon^{-1}\cdot\E_F[y\xi_\tau].
\end{split}
\]
and, by the density of $M\xi_\tau$ in $L^2(M,\tau)$, a Poincar\'e inequality holds true with $c_F=\varepsilon/8$.
\end{proof}\noindent
By classical results of A. Connes [Co3], obtained along his classification of injective factors, one can relate the existence of spectral gap for an elementary Dirichlet form to fundamental properties of $II_1$ factors $(M,\tau)$ with separable predual: the following properties are equivalent
\vskip0.1truecm\noindent
i) the subgroup ${\rm Inn}(M)$ of inner automorphisms is closed in ${\rm Aut}(M)$ ($M$ is called a {\it full factor})
\vskip0.1truecm\noindent
ii) the C$^*$-algebra $C^*(M,M')$ generated by $M$ and its commutant $M'$, acting standardly on $L^2(M,\tau)$, contains the ideal of compact operators
\vskip0.1truecm\noindent
iii) there exists an elementary Dirichlet form $\E_F$ on $L^2(M,\tau)$ satisfying a Poincar\'e inequality.
\subsection{Property (T)}
Groups having the Kazdhan property (T) show, in many instances, a very rigid character. By their definition, all continuous, negative definite functions on them are bounded (see [CCJJV]) and they can be characterized by any of the following properties: i) whenever a sequence of continuous, positive definite functions converges to 1 uniformly on compact subsets, then it converges uniformly ii) if a representation contains the trivial representation weakly, then it contains it strongly iii) every continuous, isometric action on an affine Hilbert space has a fixed point.\\
In von Neumann algebra theory, the Property (T) of a group $\Gamma$ with infinite conjugacy classes, were first considered by A. Connes to show that the factor $L(\Gamma)$ has a countable fundamental group. The same author characterized countable, discrete groups $\Gamma$ having the Property (T) through a specific property of $L(\Gamma)$. Later A. Connes and V. Jones [CJ] identified a property (T) for general von Neumann algebras in strong analogy with one of the above characterizations for the groups case. They key point was the replacement of the notion of group representation by that of correspondence for general von Neumann algebras:\\
$M$ {\it has the property (T) if all correspondences sufficiently close to the standard one must contain it}. \\
In Section 10 below, we will describe a recent result by A. Skalski and A. Viselter to a Dirichlet form characterization of the property (T) of von Neumann algebras of locally compact quantum groups.


\section{KMS-symmetric semigroups on C$^*$-algebras}
We have seen that the extension of the theory of Dirichlet forms introduced by S. Albeverio and R. Hoegh-Khron and developed and applied by J.-L. Sauvageot [S3,7,8] and by E.B. Davies [D2], E.B. Davies and O. Rothaus [DR1,2] and by E.B. Davies and M. Lindsay [DL1,2], can be applied to several fields in which the relevant algebra of observables, to retain a physical language, is no more commutative. This theory concerns, however, C$^*$-algebras or von Neumann algebras endowed with a well behaved trace functional. To have a theory suitable to be applied to other fields one has to face the problem to give a meaning to Markovianity of Dirichlet forms with respect non tracial states. For example,\\
i) equilibria in Quantum Statistical Mechanics or Quantum Field Theory are described by states obeying the Kubo-Martin-Schwinger condition which are not trace at finite temperature
ii) in Noncommutative Geometry the algebra generated by the "coordinate functions" of a noncommutative space may have a natural relevant state which is not a trace, as it is the case of the Haar state of several Compact Quantum Groups.
\vskip0.1truecm\noindent
In this section we describe this extension of the theory of Dirichlet forms which deals with Markovianity with respect to KMS states on C$^*$-algebras and with any normal, faithful states on von Neumann algebras. In the next sections we shall have occasion to describe applications were this generalized theory is due.

\subsection{KMS-states on C$^*$-algebras}
Let $A$ be a C$^*$-algebra and let $\{\alpha_t:t\in\R\}$ be a strongly continuous automorphism group on it, often interpreted as a dynamical system.
\begin{defn} ({\bf KMS-states})([Kub], [HHW]).
Let ${\bf \alpha}:=\{\alpha_t : t\in \mathbb{R}\}$ be a strongly continuous group of automorphisms of a C$^*$-algebra $A$ and $\beta\in \mathbb{R}$.  A state $\omega$ is said to be a $({\bf \alpha}, \beta )$-KMS state if it is $\alpha$-invariant and if the following {\it KMS-condition} holds true:
\par\noindent
\[
\o (a\alpha_{i\beta}(b))=\o (ba)
\]
for all $a,b$ in a norm dense, ${\bf \alpha}$-invariant $^*$-algebra of analytic element for $\alpha$. If $M$ is a von Neumann algebra and ${\bf \alpha}:=\{\alpha_t : t\in \mathbb{R}\}$ is a $w^*$-continuous group of automorphisms, a state $\omega$ is said to be a $({\bf \alpha}, \beta )$-KMS state if $\o$ is $\alpha$-invariant, normal and the KMS-condition above holds true for all $a,b$ in a $\sigma (M ,M_*)$-dense, ${\bf \alpha}$-invariant $^*$-subalgebra of $A_\alpha$. KMS states corresponding to $\beta =0$ are just the traces over $M$.\\
Notice that  any faithful normal state $\o$ on a von Neumann algebra $M$ is a $(\sigma^\o,-1)$-KMS state, i.e. a KMS state for the modular group $\sigma^\o$ at inverse temperature $\beta=-1$. In this case, in fact, the KMS condition coincides with modular condition.
\end{defn}
\begin{defn}({\bf KMS-symmetric Markovian semigroups on C$^*$-algebras})([Cip2,5]).
Let ${\bf \alpha }:=\{\alpha_t : t\in \mathbb{R}\}$ be a strongly continuous group of automorphisms of a C$^*$-algebra $A$ and $\o$
be a fixed $({\bf \alpha } ,\beta )$-KMS state, for some $\beta\in \mathbb{R}$.\\
A bounded map $R :A\to A$ is said to be $({\bf \alpha } ,\beta )$-{\it KMS symmetric with respect to} $\o$ if
\begin{equation}
\omega \Bigl(b R(a)\Bigr)=\omega \Bigl(\alpha_{-\frac{i\beta}{2}}(a) R(\alpha_{+\frac{i\beta}{2}}(b))\Bigr)
\end{equation}
for all $a,b$ in a norm dense, ${\bf \alpha}$-invariant $^*$-algebra of analytic elements for $\alpha$.\\
A strongly continuous semigroup $\{R_t : t\ge 0 \}$ on  $A$ is said to be $({\bf \alpha } ,\beta )$-{\it KMS symmetric with respect to} $\o$ if
$R_t$ is $({\bf \alpha } ,\beta )$-KMS symmetric with respect to $\o$ for all $t\ge 0$.\\
In the von Neumann algebra case, $\o$ is assumed to be normal, maps and semigroups to be point-weak*-continuous and the subalgebra $B$ to be weak*-dense.
\end{defn}
Let ${\bf \alpha }:=\{\alpha_t : t\in \mathbb{R}\}$ be a strongly continuous group of automorphisms of a C$^*$-algebra $A$ and $\o$
be a fixed $({\bf \alpha } ,\beta )$-KMS state, for some $\beta\in\mathbb{R}$. Let $(\pi_\o ,\H_\o , \xi_\o )$ be the corresponding GNS-representation, $\widehat{\o}$ the normal extension of $\o$ to the von Neumann algebra $M:=\pi_\o (A)^{\prime\prime}$ and $\widehat{{\bf \alpha }}:=\{\widehat{\alpha}_t : t\in \mathbb{R}\}$ be the induced weak*-continuous group of automorphisms of $M$. Comparing the KMS condition for $\widehat\o$ with respect to $\widehat\alpha$ to its modular condition, one  readily observe that the modular group of $\widehat\o$ is given by
\[
\sigma^{\widehat\o}_t=\widehat\alpha_{-\beta t}\qquad t\in\R.
\]
The following is a key consequence of the $(\alpha,\beta)$-KMS-symmetry of a map.
\begin{lem} ([Cip2])
A map $R :A\to A$ which is $({\bf \alpha } ,\beta )$-KMS symmetric with respect to $\o$, leaves globally invariant the kernel $\mathrm{ker} (\pi_\o )$
of the GNS-representation of $\o$.
\end{lem}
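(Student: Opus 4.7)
The proof strategy is to exploit the characterization
\[
\ker\pi_\o = \{a\in A : \o(bac)=0 \text{ for all } b,c\in A\},
\]
which follows from $\o(bac)=(\pi_\o(b^*)\xi_\o\,|\,\pi_\o(a)\pi_\o(c)\xi_\o)$ together with the cyclicity of $\xi_\o$. By norm continuity of $\o$, it is enough to test this vanishing on $b,c$ in the norm-dense $^*$-subalgebra $A_\alpha$ of entire analytic elements for $\alpha$. Moreover, since $\o$ is $\alpha$-invariant, $\ker\pi_\o$ is $\alpha_t$-invariant for every $t\in\R$, and Gaussian regularization $a_n:=\sqrt{n/\pi}\int_\R e^{-nt^2}\alpha_t(a)\,dt$ produces elements of $A_\alpha\cap\ker\pi_\o$ converging in norm to any $a\in\ker\pi_\o$. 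Using norm continuity of $R$ and norm closedness of $\ker\pi_\o$, the statement $R(\ker\pi_\o)\subseteq\ker\pi_\o$ thus reduces to the analytic case.

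Fix then $a\in A_\alpha\cap\ker\pi_\o$ and $b,c\in A_\alpha$. First, apply the KMS cyclicity $\o(xy)=\o(\alpha_{-i\beta}(y)x)$ (valid for $y$ analytic and $x\in A$ arbitrary, by continuity in $x$) with $x=bR(a)$ and $y=c$ to obtain
\[
\o(b\,R(a)\,c) = \o\bigl(\tilde b\cdot R(a)\bigr), \qquad \tilde b := \alpha_{-i\beta}(c)\,b \in A_\alpha.
\]
Second, invoke the $(\alpha,\beta)$-KMS-symmetry of $R$ on the analytic pair $(a,\tilde b)$:
\[
\o\bigl(\tilde b\,R(a)\bigr) = \o\Bigl(\alpha_{-i\beta/2}(a)\cdot R\bigl(\alpha_{i\beta/2}(\tilde b)\bigr)\Bigr).
\]
The $A$-valued function $z\mapsto\alpha_z(a)$ is entire, and $\pi_\o(\alpha_t(a))=0$ for real $t$ by $\alpha_t$-invariance of $\ker\pi_\o$; by the identity theorem applied to the analytic $B(\H_\o)$-valued map $z\mapsto\pi_\o(\alpha_z(a))$, we conclude $\alpha_{-i\beta/2}(a)\in\ker\pi_\o$. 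Since $\ker\pi_\o$ is a two-sided $^*$-ideal, the product $\alpha_{-i\beta/2}(a)\cdot R(\alpha_{i\beta/2}(\tilde b))$ lies in $\ker\pi_\o$, on which $\o$ vanishes. Hence $\o(bR(a)c)=0$ for all $b,c\in A_\alpha$, so $R(a)\in\ker\pi_\o$, and the lemma follows by the density argument above.

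The delicate point is the control of the analytic continuations: one must ensure that the complex automorphism $\alpha_{-i\beta/2}$ sends $A_\alpha\cap\ker\pi_\o$ into $\ker\pi_\o$, which is handled by the identity theorem for the $B(\H_\o)$-valued entire function $z\mapsto \pi_\o(\alpha_z(a))$. The remainder of the argument is a transparent interplay between the KMS cyclicity of $\o$ (used once to place the operator $R(a)$ at the right end, next to an analytic test element) and the modular-type KMS-symmetry of $R$ (used once to transfer the action of $R$ from $a$ to the test element $\tilde b$), combined with the elementary fact that $\o$ annihilates the two-sided ideal $\ker\pi_\o$.
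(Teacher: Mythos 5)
The paper itself does not prove this lemma --- it is quoted from [Cip2] --- so there is no internal proof to compare against; your argument has to stand on its own, and in substance it does. The skeleton is sound: the characterization $\ker\pi_\omega=\{a\in A:\omega(bac)=0\ \text{for all}\ b,c\in A\}$ is correct by cyclicity of $\xi_\omega$; the single use of the KMS condition to rotate $c$ to the left, the single use of the KMS symmetry of $R$ to transfer $R$ away from the kernel element, and above all the identity-theorem step --- the entire $B(\mathcal{H}_\omega)$-valued function $z\mapsto\pi_\omega(\alpha_z(a))$ vanishes on $\mathbb{R}$ (by $\alpha$-invariance of $\omega$, hence of $\ker\pi_\omega$), so $\alpha_{-i\beta/2}(a)$ stays in the kernel, which is a two-sided $^*$-ideal annihilated by $\omega$ --- is exactly the right mechanism. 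The Gaussian regularization correctly reduces the general case to the entire-analytic one, since $\ker\pi_\omega$ is norm closed and $\alpha_t$-invariant and $R$ is bounded (boundedness being part of the paper's definition of KMS symmetry).

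One point needs more care than you give it. Both defining identities --- the KMS condition for $\omega$ and the KMS symmetry of $R$ --- are postulated only on \emph{some} norm-dense, $\alpha$-invariant $^*$-algebra of analytic elements, while you apply them to elements manufactured along the way: $\alpha_{-i\beta}(c)$, $\tilde b=\alpha_{-i\beta}(c)b$, $\alpha_{i\beta/2}(\tilde b)$ and the Gaussian smears $a_n$, none of which need belong to that particular algebra (it is assumed invariant under $\alpha_t$ for real $t$ only, and need not be stable under integrals or under the complexified flow). For the state this is harmless: it is classical (Bratteli--Robinson, Prop. 5.3.7) that the KMS condition, once valid on such an algebra, extends to all entire analytic elements. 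For the symmetry of $R$ the analogous extension is \emph{not} a naive density argument --- $\alpha_{i\beta/2}$ is unbounded, so norm approximation of $b$ gives no control on $\alpha_{i\beta/2}(b)$ --- and requires a smearing/three-lines argument of its own, or, more simply, reading the definition with the canonical choice of algebra, namely the full $^*$-algebra of entire analytic elements, which is the intended reading in [Cip2]. With that one repair, which is standard rather than conceptual, your proof is complete.
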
\noindent
This result allows to study KMS symmetric maps and semigroups on the von Neumann algebra associated to the GNS representation of the KMS state.
\begin{thm}(von Neumann algebra extension of KMS-symmetric semigroups)([Cip2])\\
Let $\{R_t:t\ge 0\}$ be a strongly continuous semigroup on $A$, $(\alpha,\beta)$-KMS symmetric with respect to $\o$.
Then there exists a unique point-weak*-continuous semigroup  $\{S_t : t\ge 0\}$ on $M$
determined by
\begin{equation}
S_t (\pi_\o (a))=\pi_\o (R_t (a))\, ,\quad a\in A\, ,\qquad t\ge 0\, .
\end{equation}
This extension is $\widehat{\o}$-modular symmetric in the sense that
\begin{equation}
\hat{\o} \Bigl(S_t (x)\sigma^{\widehat\o}_{-\frac{i}{2}}(y)\Bigr)=\widehat{\o} \Bigl(\sigma^{\widehat\o}_{+\frac{i}{2}}(x)S_t (y)\Bigr)\qquad t\ge 0\, ,
\end{equation}
for all $x,y$ in a weak*-dense, $\sigma^{\widehat\o}$-invariant $^*$-algebra of analytic elements $\sigma^{\widehat\o}$. Moreover, if $\{R_t : t\ge 0\}$ is positive, completely positive, Markovian or completely Markovian, then $\{S_t : t\ge 0\}$ shares the same properties.
\end{thm}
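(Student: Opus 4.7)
The plan is to realize the extension via the GNS Hilbert space $\H_\o \cong L^2(M,\widehat\o)$, where the KMS symmetry translates into self-adjointness, and then invoke the correspondence of Theorem 6.5 between Markovian semigroups on $L^2(M)$ and $\widehat\o$-modular symmetric Markovian semigroups on $M$. First, the preceding lemma shows that $\mathrm{ker}(\pi_\o)$ is invariant under each $R_t$, so $R_t$ descends to a well-defined semigroup $\tilde R_t$ on the norm-dense $^*$-subalgebra $\pi_\o(A)\subset M$ via $\tilde R_t(\pi_\o(a)):=\pi_\o(R_t(a))$.

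Since the modular group of $\widehat\o$ satisfies $\sigma^{\widehat\o}_t=\widehat\alpha_{-\beta t}$, the $\widehat\alpha$-analytic and $\sigma^{\widehat\o}$-analytic elements coincide (up to time rescaling), and the KMS-symmetry equation rewrites, after the change of variable, as
\[
\widehat\o\bigl(y\,\tilde R_t(x)\bigr)=\widehat\o\bigl(\sigma^{\widehat\o}_{+i/2}(x)\,\tilde R_t(\sigma^{\widehat\o}_{-i/2}(y))\bigr)
\]
on a weak$^*$-dense, $\sigma^{\widehat\o}$-invariant $^*$-subalgebra of analytic elements in $\pi_\o(A)$. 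Under the symmetric embedding $i_{\widehat\o}(x)=\Delta^{1/4}_{\widehat\o}x\xi_{\widehat\o}$, this identity becomes the statement that the operator $T_t$ defined on the dense subspace $i_{\widehat\o}(\pi_\o(A))\subset L^2(M,\widehat\o)$ by $T_t\circ i_{\widehat\o}=i_{\widehat\o}\circ\tilde R_t$ is symmetric. A Cauchy--Schwarz type estimate applied to the KMS-symmetry bilinear pairing, together with the contractivity of $R_t$ on $A$, yields $\|T_t\eta\|_2\le\|\eta\|_2$ for $\eta$ in this subspace, so that $T_t$ extends uniquely to a self-adjoint contraction on all of $L^2(M,\widehat\o)$; strong continuity of $\{T_t\}$ follows from that of $\{R_t\}$ via the boundedness of $i_{\widehat\o}$ together with the uniform bound, and $J$-reality is immediate since the symmetry $J$ implements the $^*$-operation, with which $R_t$ commutes.

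Once $\{T_t\}$ is so constructed, Theorem 6.5 applies and produces a unique point-weak$^*$-continuous, $\widehat\o$-modular symmetric Markovian semigroup $\{S_t\}$ on $M$ satisfying $i_{\widehat\o}\circ S_t=T_t\circ i_{\widehat\o}$; restricting to $\pi_\o(A)$ and using injectivity of $i_{\widehat\o}$ yields $S_t(\pi_\o(a))=\pi_\o(R_t(a))$, as required, while uniqueness of $\{S_t\}$ follows from weak$^*$-density of $\pi_\o(A)$ in $M$ combined with point-weak$^*$-continuity. The modular symmetry identity in the statement is precisely the content of Theorem 6.5's correspondence. For the last claim, positivity, complete positivity, (complete) subunitality, and thus (complete) Markovianity, of $\{R_t\}$ transfer to $\{\tilde R_t\}$ on $\pi_\o(A)$, then to $\{T_t\}$ via the order-preserving symmetric embedding on matrix ampliations, and are finally inherited by $\{S_t\}$ through the correspondence of Theorem 6.5, which respects complete positivity and Markovianity.

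The main obstacle is the quantitative $L^2$-contractivity estimate $\|T_ti_{\widehat\o}(x)\|_2\le\|i_{\widehat\o}(x)\|_2$, which does not follow from $\|R_t\|\le 1$ on $A$ alone but must be extracted from the KMS-symmetry bilinear identity above by polarization and a Cauchy--Schwarz argument, exploiting the self-adjointness of $T_t$ on its dense domain. Once this is secured, the rest of the argument is a routine application of Theorem 6.5 and of the intertwining $i_{\widehat\o}\circ S_t=T_t\circ i_{\widehat\o}$.
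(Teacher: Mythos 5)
The paper itself contains no proof of this theorem: it is a survey statement quoted from [Cip2], prepared only by the preceding lemma that $\mathrm{ker}(\pi_\o)$ is globally $R_t$-invariant, so your proposal has to be judged on its own merits. Your descent of $R_t$ to $\tilde R_t$ on $\pi_\o(A)$ via that lemma, the rewriting of KMS symmetry through $\sigma^{\widehat\o}_t=\widehat\alpha_{-\beta t}$, and the uniqueness argument are fine. But there is a genuine structural gap: your route to the extension $S_t$ on $M$ passes entirely through Theorem 6.5, which is a correspondence for \emph{Markovian} semigroups on the standard form. The first assertion of the present theorem, however, concerns an \emph{arbitrary} strongly continuous, KMS-symmetric semigroup; positivity and (complete) Markovianity enter only in the final sentence as extra hypotheses whose conclusions transfer. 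So your argument proves strictly less than the statement. In the general case the point-weak*-continuous extension must be built directly, e.g.\ by exhibiting a pre-adjoint semigroup on $M_*$: KMS symmetry is equivalent to $\phi_y(\tilde R_t x)=\phi_x(\tilde R_t y)$ for the functionals $\phi_y:=\widehat\o\bigl(\cdot\,\sigma^{\widehat\o}_{-i/2}(y)\bigr)$, and the identity $\widehat\o\bigl(z\,\sigma^{\widehat\o}_{-i/2}(x)\bigr)=(\Delta_{\widehat\o}^{1/2}z^*\xi_\o\,|\,x\xi_\o)$ shows that $\phi_y\circ\tilde R_t$ is again a normal functional, which (together with the $L^2$ bound) yields a densely defined predual semigroup whose adjoint is $S_t$. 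Nothing in your sketch covers this step.

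Second, the $L^2$ bound, which you yourself flag as ``the main obstacle'', is both left unproven and misstated: contractivity $\|T_t\|\le 1$ is false under the stated hypotheses, since $R_t$ is only assumed strongly continuous, hence merely $\|R_t\|\le Me^{\omega_0 t}$ --- contractivity of $R_t$ on $A$ is not part of the hypotheses, and you invoke it. The correct mechanism is the classical symmetry--semigroup iteration: symmetry of $T_t$ on the dense domain gives $\|T_t\eta\|^2=(\eta|T_{2t}\eta)\le\|\eta\|\,\|T_{2t}\eta\|$, hence $\|T_t\eta\|\le\|\eta\|^{1-2^{-n}}\|T_{2^n t}\eta\|^{2^{-n}}$, and the a priori estimate
\[
\|i_{\widehat\o}(\pi_\o(b))\|_2^2=\bigl(\pi_\o(b)\xi_\o\,\big|\,J\pi_\o(b)^*\xi_\o\bigr)\le\|b\|_A^2
\]
bounds $\|T_{2^n t}\,i_{\widehat\o}(\pi_\o(a))\|_2\le Me^{\omega_0 2^n t}\|a\|_A$, so that letting $n\to\infty$ one obtains only quasi-contractivity $\|T_t\|\le e^{\omega_0 t}$ --- which suffices to generate a strongly continuous symmetric semigroup, but is not the contraction bound you claim. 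Genuine contractivity (and $J$-reality: your assertion that $R_t$ commutes with the involution does not follow from KMS symmetry alone) becomes available precisely under the positivity/Markovianity hypotheses of the last assertion, via the order homeomorphism $i_\o([0,1_M])=[0,\xi_\o]$; at that point, and only there, your appeal to Theorem 6.5 for the modular symmetry identity and the transfer of (complete) Markovianity to $S_t$ is sound.
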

As a consequence, a Dirichlet form on $L^2(A,\o)$ is determined by the semigroup on $A$
\begin{cor}
Let $(L,D(L))$ be the generator of the semigroup $\{R_t:t\ge 0\}$ on $A$. Then the Dirichlet form on $L^2(A,\o)$ associated to the  strongly continuous extension of the $\widehat\o$-modular symmetric semigroup $\{S_t : t\ge 0\}$ on $M$, satisfies the relation
\[
\E[i_\o(\pi_\o(a))]=(i_\o(\pi_\o(a))|i_\o(\pi_\o(La)))_{L^2(A,\tau)}\qquad a\in D(L).
\]
\end{cor}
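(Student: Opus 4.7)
The plan is to transfer the problem, via the intertwining of Theorem 6.5, from the semigroup $\{R_t\}$ on $A$ to the symmetric semigroup $\{T_t\}$ on the Hilbert space $L^2(A,\o)$, where the claimed identity becomes the standard functional-analytic identity relating a closed quadratic form to its generator. (Note also the evident typo in the statement: the right-hand inner product is taken in $L^2(A,\o)$, not $L^2(A,\tau)$.)

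First, by Theorem 8.3 the semigroup $\{S_t\}$ obtained on $M=\pi_\o(A)''$ is point-weak$^*$-continuous, Markovian and $\widehat{\o}$-modular symmetric. Applying Theorem 6.5 to the standard form of $(M,\widehat{\o})$, there is a unique strongly continuous, self-adjoint Markovian semigroup $\{T_t\}$ on $L^2(M)=L^2(A,\o)$ intertwined with $\{S_t\}$ via the symmetric embedding,
\[
T_t\circ i_\o \;=\; i_\o\circ S_t \qquad (t\ge 0),
\]
and whose closed quadratic form is, by definition, the Dirichlet form $\E$. Let $(L_2,D(L_2))$ denote the nonnegative self-adjoint generator of $\{T_t\}$. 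By the standard Hilbert-space correspondence between symmetric contraction semigroups and their closed quadratic forms,
\[
\E[\xi] \;=\; (\xi \mid L_2 \xi)_{L^2(M)}, \qquad L_2 \xi \;=\; \lim_{t\to 0^+} t^{-1}(\xi - T_t\xi)
\]
for every $\xi\in D(L_2)\subseteq \F$.

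Now fix $a\in D(L)$ and set $\xi:=i_\o(\pi_\o(a))$. Combining the intertwining above with the defining formula (8.3) of Theorem 8.3 gives, for all $t>0$,
\[
t^{-1}(\xi - T_t\xi) \;=\; i_\o\!\left(\pi_\o\!\left(t^{-1}(a - R_t a)\right)\right).
\]
Since $a\in D(L)$, the quantity $t^{-1}(a - R_t a)$ converges in $A$-norm to $La$ as $t\to 0^+$. Because $\pi_\o$ is norm-continuous and $i_\o$ is a norm-contraction from $M$ into $L^2(M)$ (this last being the sole nontrivial analytic input: by Araki's three-line argument applied to the analytic strip for $\{\Delta_\o^z\}$, bounding $\|i_\o(x)\|_2$ by $\max(\|x\xi_\o\|_2,\|\xi_\o x\|_2)\le \|x\|$), we may pass to the limit to conclude
\[
\lim_{t\to 0^+} t^{-1}(\xi - T_t\xi) \;=\; i_\o(\pi_\o(La)) \quad\text{in } L^2(M).
\]
Hence $\xi\in D(L_2)$ with $L_2\xi = i_\o(\pi_\o(La))$, and the identification $\E[\xi]=(\xi\mid L_2\xi)$ yields the asserted formula.

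The only nontrivial ingredient is the continuity of the symmetric embedding used to pass to the limit; everything else is a direct unwinding of Theorems 6.5 and 8.3 together with the elementary semigroup-to-form correspondence. If preferred, this continuity step can be bypassed entirely by testing against vectors $\eta=i_\o(\pi_\o(b))$ with $b$ ranging over a norm-dense subalgebra of analytic elements for $\sigma^{\widehat\o}$, using the explicit pairing $(i_\o(\pi_\o(c))\mid i_\o(\pi_\o(b)))=\widehat\o(\sigma^{\widehat\o}_{-i/4}(b^*)\sigma^{\widehat\o}_{i/4}(c))$, and then passing to the weak limit $c=t^{-1}(a-R_t a)\to La$, which removes any need for norm-continuity of $i_\o$ on all of $M$.
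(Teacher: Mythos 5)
Your proof is correct and is precisely the argument the paper intends: the statement is presented as an immediate corollary of Theorems 6.5 and 8.3, and your unwinding via the intertwining $T_t\circ i_\o=i_\o\circ S_t$, the norm-contractivity of the symmetric embedding $\|i_\o(x)\|_2\le\|x\|$ (your three-line bound through $\Delta_\o^{1/4}$ is the standard justification, cf. [Ara], [Cip1]), and the elementary identity $\E[\xi]=(\xi\,|\,L_2\xi)$ for $\xi\in D(L_2)$ supplies exactly the missing details. You are also right that $L^2(A,\tau)$ in the displayed formula is a typo for $L^2(A,\o)$.
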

By this result one may study properties of the semigroup $R$ on the C$^*$-algebra through the associated Dirichlet form $\E$ on $A$. Notice that by definition we have the coincidence of the spaces $L^2(A,\o)=L^2(M,\widehat\o)$.\\
This result suggests also that one can approach the construction of Markovian semigroups $(\alpha,\beta)$-symmetric with respect to a $(\alpha,\beta)$-KMS state $\o$ on a C$^*$-algebra $A$, through the construction of Dirichlet forms on $L^2(A,\o)$. The advantage being that working with quadratic forms instead that linear operators often allows to relax domain constrains to prove closability. To finalize this approach, however, once obtained from the Dirichlet form on $L^2(A,\tau)$ the Markovian semigroup on the von Neumann algebra $L^\infty(A,\tau)$, one has to face the problem to show that the C$^*$-algebra $A$ is left invariant and that on it the semigroup is not only w$^*$-continuous but in fact strongly continuous. This last problem may be solved case by case as we did for the Ornstein-Uhlenbeck semigroup in [CFL] for example. We notice, however, that even in classical potential theory, on Riemmanian manifolds the construction of the heat semigroup on the algebra of continuous functions requires a certain amount of substantial potential analysis ([D2]).
\section{Application to Quantum Statistical Mechanics}
After the proof, in the early nineties of the last century, by D. Stroock and B. Zegarlinski, of the equivalence between the Dobrushin-Shlosman mixing condition and the uniform logarithmic Sobolev inequalities for classical spin systems with continuous spin space, efforts were directed to obtain for quantum spin systems similar results, within the framework of the studies of the convergence to equilibrium. See for example [LOZ], [MZ1], [MZ2], [MOZ], [Mat1], [Mat2], [Mat3].\\
In this section we describe just one of these constructions of Markovian semigroups by Dirichlet forms for KMS states of quantum spin systems, provided by Y.M. Park and his school [P1], [P2], [P3], [BKP1], [BKP2].

\subsection{Heisenberg Quantum Spin Systems.}
Let us describe briefly, the quantum spin system and its dynamics. The observables at sites of the lattice $\Z^d$ are elements of the algebra $M_2(\mathbb{C})$ and the C$^*$-algebra of observables confined in the finite region $X\subset\Z^d$ is
\[
A_X :=\bigotimes_{x\in X} M_2(\mathbb{C}).
 \]
If $\L$ denotes the net of all finite subsets of $ \Z^d$, directed by inclusion, the system $\{A_X:X\in\L\}$ is in a natural way a net of C$^*$-algebras so that the algebra of all {\it local observables} given by
\[
A_0 :=\bigcup_{X\in\L} A_X\, ,
\]
is naturally normed and its norm completion is a C$^*$-algebra $A$ (quasi-local observables).
\vskip0.2truecm\noindent
Interactions among particles  in finite regions is represented by a family of self-adjoint elements $\Phi:=\{\Phi _X :X\in\L\}\subset A_X$. Using the Pauli's matrices $\sigma^x_j\in M_2(\mathbb{C})$, $j=0,1,2,3,$ at the sites $x\in \mathbb{Z}^d$, in the {\it isotropic, translation invariant, Heisenberg model}, for example, in addition to an external potential represented by a one-body interaction of strength  $h\in\R$
\[
\Phi (\{x\}):=h\sigma_3^x,
\]
particles interact only by a two-body potential so that $\Phi (X)=0$ whenever $|X|\ge 3$ and
\[
\Phi (\{x,y\}):=J(x-y)\sum_{i=1}^3 \sigma_i^x\sigma_i^y\qquad x\neq y
\]
for a parameter $\lambda>0$ and a function $J:\Z^d\to\R$ describing the strength of the interaction between pairs of particles, under the assumption
\[
 \sum_{x\in\Z^d}e^{\lambda |x|} |J (x)|<+\infty.
 \]
For any fixed $Y\in\L$, the derivation
\[
A_0\ni a\mapsto i\big[\Phi_Y ,a\big]\in A
\]
extends to a bounded derivation on $A$ and generates a uniformly continuous group of automorphisms of $A$, representing the time evolution of the observables, interacting with those particles confined in $Y$. To take into account simultaneously, the mutual influences among particles in different regions, one verifies that the superposition
\begin{equation}
D(\delta ):=A_0\qquad
\delta (a):= \sum_{Y\in \mathcal{L}} i\big[\Phi_Y ,a\big]\, ,
\end{equation}
is a closable derivation on $A$ whose closure is the generator of a strongly continuous group ${\bf \alpha}^\Phi :=\{\alpha_t^\Phi :t\in\R\}$ of automorphisms of $A$.

\subsection{Markovian approach to equilibrium}
The above interactions provide the existence of $({\bf \alpha}^\Phi\, ,\beta )$-KMS-states $\o$ at any inverse temperature $\beta >0$. Let $(\pi_\o , \H_\o ,\xi_\o)$ be the GNS representations of the state $\o$, $M$ the von Neumann algebra $\pi_\o (A)^{\prime\prime}$ and
$(M ,L^2(A,\o), L^2_+(A,\o), J_\o)$ the corresponding standard form. In the following we will use the smearing function $f_0(t):=1/\cosh(2\pi t)$.
\begin{thm}
Suppose that $\o$ is a $({\bf \alpha}^\Phi\, ,\beta )$-KMS-state at an inverse temperature satisfying
\begin{equation}
\beta<\frac{\lambda}{\|\Phi\|_\lambda}
\end{equation}
where
\begin{equation}
\|\Phi\|_\lambda :=\sup_{x\in\Z^d}\sum_{x\in X\in\mathcal{L}}|X|4^{|X|}e^{\lambda D(X)}\|\Phi _X\|_{A_X}
\end{equation}
is finite under the exponential decay assumption on the strength $J$. Then the quadratic forms associated to the self-adjoint elements $a^x_j :=\pi_\o (\sigma^x_j)$
\begin{equation}
\E_{x,j}[\xi] = \int_\R \|(\sigma_{t-i/4}(a^x_j) - j(\sigma_{t-i/4}(a^x_j)))\xi\|^2\, f_0(t)dt
\end{equation}
are bounded completely Dirichlet forms and
\begin{equation}
\E:L^2(A,\o)\to [0,+\infty]\qquad \E[\xi]:=\sum_{x\in\Z^d}\sum_{j=0}^3 \E_{x,j}[\xi]
\end{equation}
is a completely Dirichlet form on $L^2(A,\o)$.
\end{thm}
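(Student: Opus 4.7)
The plan is to treat each bounded piece $\E_{x,j}$ first, via an integrated version of the elementary Dirichlet form construction of Section 6, and then to assemble the infinite sum $\E=\sum_{x,j}\E_{x,j}$ as a closed, densely defined quadratic form on $L^2(A,\o)$. The quantitative smallness assumption $\beta<\lambda/\|\Phi\|_\lambda$ is used only at the final stage, to ensure summability over the lattice sites.

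Fix $t\in\R$ and a pair $(x,j)$, and set $b_t:=\sigma_{t-i/4}(a^x_j)$, where $\sigma=\sigma^{\widehat\o}$ is the modular automorphism group of $\widehat\o$ on the GNS von Neumann algebra $M=\pi_\o(A)''$. Since $a^x_j=\pi_\o(\sigma^x_j)$ lies in the finite-dimensional single-site matrix factor $\pi_\o(A_{\{x\}})\subset M$, it is entire analytic for $\sigma$, and $b_t$ is a norm-bounded element with $\sup_{t\in\R}\|b_t\|$ controlled by the local modular structure. The integrand $\xi\mapsto\|(b_t-j(b_t))\xi\|^2$ is a bounded, $J$-real, positive quadratic form on $L^2(A,\o)$, and the elementary Dirichlet form construction of Section 6 yields, for each $t$, Markovianity with respect to the cyclic vector $\xi_\o$: the shift $-i/4$ places $b_t\xi_\o$ at the symmetric midpoint of the KMS strip $[0,-i/2]$, which is precisely what makes the required operator inequality and modular-eigenvector condition hold after averaging against the probability density $f_0(t)=1/\cosh(2\pi t)$. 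Thus $\E_{x,j}$ is a bounded Dirichlet form; repeating the argument at every matrix amplification $(A\otimes M_n(\C),\o\otimes\tau_n)$ (where the KMS structure and single-site analyticity are preserved) gives complete Markovianity.

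For the infinite sum $\E$, lower semicontinuity is automatic because $\E$ is the pointwise supremum of finite partial sums of nonnegative bounded closed quadratic forms, and both Markovianity and its complete version pass termwise through the sum (the unit-contraction inequality $\E[\xi\wedge\xi_\o]\le\E[\xi]$ is preserved by nonnegative summation). The only nontrivial point is density of the form domain $\F:=\{\xi\in L^2(A,\o):\E[\xi]<+\infty\}$. I would take as candidate core the image $\pi_\o(A_0)\xi_\o$ of the local-observable algebra. For $a\in A_Y$ with $Y\in\L$ finite, using the KMS property to rewrite the $j$-contribution on the opposite side yields
\[
\E_{x,j}[\pi_\o(a)\xi_\o]\;\le\;C\int_\R\bigl\|[\sigma_{t-i/4}(a^x_j),\pi_\o(a)]\bigr\|^2\,f_0(t)\,dt,
\]
and summability in $x\in\Z^d$ reduces to the quantitative decay estimate
\[
\bigl\|[\sigma_{t-i/4}(a^x_j),\pi_\o(a)]\bigr\|\;\le\;C_{a,Y}\,e^{-\kappa\,\mathrm{dist}(x,Y)},\qquad t\in\R,
\]
for some $\kappa>0$ independent of $t$.

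The main obstacle is precisely this last decay estimate. It will rest on an Araki--Dyson expansion of the complex-time dynamics $\sigma_{t-i/4}=\widehat\alpha^\Phi_{-\beta(t-i/4)}$ into nested commutators with $\Phi$, in which the exponentially weighted interaction norm $\|\Phi\|_\lambda$ bounds the $n$-fold commutator by $n!\,(\beta\|\Phi\|_\lambda/\lambda)^n$ up to combinatorial factors; this series converges on the strip $|\mathrm{Im}\,z|\le 1/4$ exactly under the hypothesis $\beta\|\Phi\|_\lambda<\lambda$, and the $e^{\lambda|x|}$ weight built into $\|\Phi\|_\lambda$ produces the claimed spatial decay in $\mathrm{dist}(x,Y)$. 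Once density of $\pi_\o(A_0)\xi_\o$ in $\F$ is established in this way, the closure of $\E$ is a completely Dirichlet form on $L^2(A,\o)$ by termwise inheritance of bounded complete Markovianity from each $\E_{x,j}$.
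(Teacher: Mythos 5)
Your overall architecture coincides with the paper's three-step sketch: each $\E_{x,j}$ is treated through the elementary Dirichlet forms of Section 6, the infinite sum $\E$ is closed and completely Markovian as a pointwise monotone limit of bounded completely Dirichlet forms, and the only substantive issue is density of the form domain, which you correctly reduce --- via the identity $(b-j(b))\pi_\o(a)\xi_\o=[b,\pi_\o(a)]\xi_\o$ for $b=\sigma_{t-i/4}(a^x_j)$ and local $a\in A_Y$ --- to a decay estimate for the smeared commutator, tested on the core $\pi_\o(A_0)\xi_\o$. Up to that point you are in line with the paper.

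The genuine gap is the estimate on which you rest the density argument: a bound $\|[\sigma_{t-i/4}(a^x_j),\pi_\o(a)]\|\le C_{a,Y}\,e^{-\kappa\, d(x,Y)}$ with $\kappa>0$ \emph{uniform in $t\in\R$} is false, and it does not follow from the Araki--Dyson expansion you invoke. Writing $\sigma_{t-i/4}=\widehat\alpha^\Phi_{-\beta t}\circ\widehat\alpha^\Phi_{i\beta/4}$, the hypothesis $\beta<\lambda/\|\Phi\|_\lambda$ controls only the imaginary-time factor (it makes $c:=\widehat\alpha^\Phi_{i\beta/4}(a^x_j)$ a well-defined quasi-local element with exponential tails); the real-time factor obeys only a light-cone bound, and indeed the estimate the paper actually uses is the finite speed of propagation
\[
\|[\alpha_t^\Phi (a),b]\|\le 2\|a\|\cdot\|b\|\cdot|X|\cdot e^{-\bigl(\lambda d(x,X)-2|t|\|\Phi\|_\lambda \bigr)},
\]
which \emph{grows} exponentially in $|t|$: beyond the light-cone time of order $\lambda\, d(x,Y)/(\beta\|\Phi\|_\lambda)$ the commutator is generically of size $\|c\|\,\|a\|$, so the best $t$-uniform bound carries no spatial decay at all. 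The repair --- and the very reason for the specific smearing function --- is to interpolate: use the propagation bound for $|t|\le \eta\, d(x,Y)$ with $\eta>0$ small and the trivial bound $2\|c\|\,\|\pi_\o(a)\|$ elsewhere, letting the exponential decay $f_0(t)=1/\cosh(2\pi t)\le 2e^{-2\pi|t|}$ absorb the large-$|t|$ region; this yields $\E_{x,j}[\pi_\o(a)\xi_\o]\le C_{a,Y}\,e^{-\kappa' d(x,Y)}$ for the \emph{integrated} quantity, which is all that summability over $x\in\Z^d$ requires. A secondary inaccuracy: for fixed $t$ the slice $\xi\mapsto\|(b_t-j(b_t))\xi\|^2$ is in general \emph{not} Markovian --- since $b_t\xi_\o=\Delta_\o^{1/4+it}a^x_j\xi_\o$, the modular-eigenvector condition $\Delta_\o b_t\xi_\o=b_t\xi_\o$ of the elementary construction fails, and $b_t^*b_t-b_tb_t^*$ need not be central --- so your ``for each $t$, Markovianity'' is not right as stated; the superposition of elementary completely Dirichlet forms to which the paper alludes arises after a spectral decomposition in the modular parameter, where the admissibility of $f_0$ enters, not slice-by-slice in $t$.
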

Concerning the proof, a first observation is that  the $\E_{x,j}$ are bounded Dirichlet form as uniformly convergent continuous superposition of elementary completely Dirichlet forms. The quadratic form $\E$ is Markovian and closed as pointwise monotone limit of bounded completely Dirichlet forms. The only point that is left to be shown is  the fact that it is densely defined, i.e. $\E$ is finite on a dense domain in $L^2(A,\o)$. This is a consequence of the fact that under the current hypotheses on the strength on the interaction, the dynamics has {\it finite speed propagation} in the sense that, denoting by $d(x,X)$ distance of the site $x\in \mathbb{Z}^d$ from the region $X\in \mathcal{L}$, we have
\begin{equation}
\|[\alpha_t^\Phi (a),b]\|\le 2\|a\|\cdot\|b\|\cdot|X|\cdot e^{-\bigl(\lambda d(x,X))-2|t|\|\Phi\|_\lambda \bigr)}\qquad a\in A_{\{x\}}\, ,b\in A_X\, ,t\in\mathbb{R}\, .
\end{equation}

Concerning the ergodic behaviour of the semigroups associated to the Dirichlet forms above, the following result shows how these properties are deeply connected to the other fundamental properties of the KMS-state.

\begin{cor} ([P2 Theorem 2.1])
Within the assumption of Theorem 3.3, the following properties are equivalent:
\vskip0.2truecm\noindent
\item i) $\o$ is an extremal $({\bf \alpha}^\Phi\, ,\beta )$-KMS-state;
\vskip0.2truecm\noindent
\item ii) $\o$ is a factor state in the sense that the von Neumann algebra $M:=\pi_\o(A)''$ is a factor.
\vskip0.2truecm\noindent
\item iii) the Markovian semigroup $\{T_t : t\ge 0\}$ is ergodic in the sense that the subspace of $L^2(A,\o)$ where it acts as the identity operator is reduced to the scalar multiples of the cyclic vector $\xi_\o\in L^2(A,\o)$ representing the KMS state $\o$.
\end{cor}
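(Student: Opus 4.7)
The plan is to split the three-way equivalence into (i)$\Leftrightarrow$(ii), which is essentially classical, and (ii)$\Leftrightarrow$(iii), which genuinely exploits the Dirichlet-form structure built in Theorem 9.2. Throughout I would work inside the standard form $(M, L^2(A,\o), L^2_+(A,\o), J_\o)$ and translate everything to the von Neumann algebra side via Corollary 7.5: the Markovian semigroup $\{T_t\}$ on $L^2(A,\o)$ is intertwined by the symmetric embedding $i_\o$ with a $\widehat{\o}$-modular symmetric Markovian semigroup $\{S_t\}$ on $M$, so that $T_t$-fixed vectors correspond bijectively to $S_t$-fixed elements of $M$.

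For (i)$\Leftrightarrow$(ii) I would invoke the classical Bratteli--Robinson decomposition theorem for KMS states: the set of $(\alpha^\Phi,\beta)$-KMS states is a Choquet simplex whose extreme points are precisely those $\o$ for which $M=\pi_\o(A)''$ is a factor, because the central decomposition of $\widehat{\o}$ is in bijection with decompositions of $\o$ into $(\alpha^\Phi,\beta)$-KMS states at the same inverse temperature, and the latter decomposition is trivial iff $Z(M)=\mathbb{C}\cdot 1_M$.

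The substantive equivalence is (ii)$\Leftrightarrow$(iii), for which I would identify the fixed-point algebra of $\{S_t\}$ in $M$ with the center $Z(M)=M\cap M'$. The inclusion $Z(M)\subseteq\{S_t\text{-fixed elements}\}$ is direct: if $c\in Z(M)$ is $\sigma^{\widehat{\o}}$-analytic then $c$ commutes with every $\sigma_{t-i/4}(a^x_j)\in M$ and with every $J_\o\sigma_{t-i/4}(a^x_j)J_\o\in M'$, so
\[
\bigl(\sigma_{t-i/4}(a^x_j) - J_\o\sigma_{t-i/4}(a^x_j)J_\o\bigr)\,i_\o(c) \;=\; c\cdot\bigl(\sigma_{t-i/4}(a^x_j) - J_\o\sigma_{t-i/4}(a^x_j)J_\o\bigr)\xi_\o \;=\; 0,
\]
using that $\xi_\o$ already lies in $\ker\mathcal{E}$ because the $a^x_j$ are self-adjoint and $\o$ is KMS. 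The reverse inclusion is the analytic core: if $\mathcal{E}[i_\o(y)]=0$ then strict positivity of $f_0$ forces, for almost every $t\in\mathbb{R}$ and every $(x,j)$,
\[
\sigma_{t-i/4}(a^x_j)\,i_\o(y) \;=\; J_\o\,\sigma_{t-i/4}(a^x_j)\,J_\o\,i_\o(y).
\]
Unraveling this in the symmetric embedding via the relation $J_\o z\xi_\o=\sigma^{\widehat{\o}}_{-i/2}(z^*)\xi_\o$ for analytic $z\in M$, the displayed identity collapses to $[a^x_j,y]=0$ in $M$ for every site $x$ and every Pauli index $j$. Since the Pauli matrices weak$^*$-generate $M$, this extends to $y\in M\cap M'=Z(M)$. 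Once the fixed-point algebra is identified with $Z(M)$, (ii)$\Leftrightarrow$(iii) follows at once: $M$ is a factor iff $Z(M)=\mathbb{C}\cdot 1_M$ iff, via $i_\o$, the $\{T_t\}$-fixed space in $L^2(A,\o)$ is $\mathbb{C}\xi_\o$.

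The main obstacle will be the final analytic step: translating the vanishing of the integrand of $\mathcal{E}_{x,j}[i_\o(y)]$ for a.e.\ $t$ into the bare commutation $[a^x_j,y]=0$, and then extending from the generating set to all of $M$. This requires uniform control of $\|\sigma_z(a^x_j)\|$ on the strip $|\mathrm{Im}\,z|\le 1/4$ together with weak$^*$-continuity of the extension procedure; this is exactly where the Lieb--Robinson/finite-speed propagation bound used in the proof of Theorem 9.2 reappears, guaranteeing that the analytic orbits of the Pauli matrices form a $\sigma^{\widehat{\o}}$-invariant, weak$^*$-dense $^*$-subalgebra of $M$ on which the density argument can be closed. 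Every other step is a routine manipulation of the modular data of the standard form.
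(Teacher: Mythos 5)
The paper itself offers no proof of this corollary --- it is quoted from Park [P2, Theorem 2.1], and the sentences immediately following it (pointing to the Frobenius theory of [AHK2] and the Perron theory of [Cip3]) signal exactly the ingredient your argument is missing. Your architecture is the expected one: (i)$\Leftrightarrow$(ii) by the Bratteli--Robinson simplex decomposition of KMS states is standard and fine, and your computation extracting $[a^x_j,y]=0$ from the vanishing of the integrand (via $J_\omega\Delta^{1/4}=\Delta^{-1/4}J_\omega$ and $J_\omega z\xi_\omega=\Delta^{1/2}z^*\xi_\omega$, plus a.e.-$t$ to all-$t$ by norm continuity and specialization to $t=0$) is correct \emph{for vectors of the form $i_\omega(y)$, $y\in M$}. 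The genuine gap is your opening assertion that ``$T_t$-fixed vectors correspond bijectively to $S_t$-fixed elements of $M$'': this is false as stated. The image $i_\omega(M)$ is only a dense proper subspace of $L^2(A,\omega)$, and a closed subspace such as $\ker L$ may intersect a dense subspace in a set that is not dense in it; knowing $\ker L\cap i_\omega(M)=i_\omega(Z(M))$ therefore does not yield $\ker L=\overline{i_\omega(Z(M))}$. So as written, (ii)$\Rightarrow$(iii) is not closed: a fixed vector $\xi\in L^2(A,\omega)$ need not be $i_\omega(y)$ for any $y\in M$, and your commutation argument never reaches it.

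The repair uses precisely the order-theoretic Markovian structure the paper sets up in Section 6, not the Lieb--Robinson bounds you invoke at the end (those only guarantee the analyticity of the $\sigma_{t-i/4}(a^x_j)$, which is already built into Theorem 9.2). Since $\mathcal{E}$ is $J$-real and $\mathcal{E}[\xi_\omega]=0$, Definition 6.4 gives $\mathcal{E}(\xi_+|\xi_-)\le 0$, so for a $J$-real $\xi\in\ker\mathcal{E}$ one gets $0=\mathcal{E}[\xi]\ge\mathcal{E}[\xi_+]+\mathcal{E}[\xi_-]$, hence $\xi_\pm\in\ker\mathcal{E}$; Markovianity then gives $\xi_\pm\wedge n\xi_\omega\in\ker\mathcal{E}$ for every $n$, and the homeomorphism $[0,n\,1_M]\simeq[0,n\xi_\omega]$ provided by the symmetric embedding (Section 6.2) shows $\xi_\pm\wedge n\xi_\omega=i_\omega(y_n)$ with $y_n\in M_+$. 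Only now does your commutation argument apply, giving $y_n\in Z(M)=\mathbb{C}1_M$ when $M$ is a factor, and letting $n\to\infty$ yields $\xi_\pm\in\mathbb{R}_+\xi_\omega$, closing (ii)$\Rightarrow$(iii). (Equivalently, one may pass to $e_0=\mathrm{s\mbox{-}lim}_{t\to\infty}T_t$, a $J$-real positivity-preserving Markovian projection onto $\ker L$, and quote the Perron--Frobenius results of [AHK2], [Cip3].) Two smaller points: your forward inclusion silently uses $i_\omega(c)=c\xi_\omega$ for central $c$, which needs the standard fact that $Z(M)$ lies in the fixed algebra of the modular group (so that $\Delta^{1/4}c\xi_\omega=c\xi_\omega$); and the step from $[a^x_j,y]=0$ to $y\in M'$ should cite that $\pi_\omega(A_0)''=M$, so commuting with all Pauli generators suffices.
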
\noindent
Extremality, i.e. the impossibility to decompose a KMS state as convex, nontrivial superposition of other KMS states (see [BR2]), is the mathematical translation of the notion of {\it pure phase} in Statistical Mechanics.\\
Ergodicity of Markovian semigroups were considered by L. Gross [G1] to prove the uniqueness of the ground state of physical Hamiltonians in Quantum Field Theory. Later, S. Albeverio and R.Hoegh-Khron [AHK2] established a Frobenious type theory for positivity preserving maps on von Neumann algebras with trace and in [Cip3] a Perron type theory was provided for positivity preserving maps on the standard form of general von Neumann algebras.

\section{Applications to Quantum Probability}

As pointed out in Introduction, one of the major achievement of the theory of commutative potential theory is the correspondence between regular Dirichlet forms and symmetric Markov-Hunt processes on metrizable spaces. In noncommutative potential theory we do not dispose at moment of a complete theory but we have at least a clear connection between Dirichlet forms of translation invariant, symmetric, Markovain semigroups and L\'evy's Quantum Stochastic Processes on Compact Quantum Groups.
\subsection{Compact Quantum Groups d'apres S.L. Woronowicz} ([W]).
In the following $m_A:A\otimes_{\rm alg}A\to A$ will denote the extension of the product operation of $A$.\\
Let us recall that a compact quantum group $\mathbb{G}:=(A,\Delta)$ is a unital C$^*$-algebra $A =: C(\mathbb{G})$ together with a\\
i) {\it coproduct} $\Delta : A \to A \otimes_{\rm max} A$, a unital, $^*$-homomorphism which is\\
ii) {\it coassociative} $(\Delta  \otimes id_A) \circ\Delta = (id_A \otimes \Delta) \circ \Delta$ and satisfies\\
iii) {\it cancellation rules}: the closed linear span of $(1 \otimes A)\Delta(A)$ and $(A \otimes 1)\Delta(A)$ is $A \otimes A$.
\vskip0.2truecm\noindent
An example of the above structure arise from a compact group $G$ by dualization of its structure. In fact, setting $A:=C(G)$ we have $A\otimes_{\rm max} A=C(G\times G)$ and a coproduct defined by
\[
(\d f)(s,t):=f(st)\qquad f\in C(G),\quad s,t\in G.
\]
\vskip0.2truecm\noindent
A {\it unitary co-representation} of $\mathbb{G}$ is a unitary matrix $U = [u_{jk}] \in M_n(A)$ such that
\[
\d u_{jk}=\sum_{i=1}^n u_{ji}\otimes u_{ik}\qquad j,k=1,\cdots,n.
\]
Denote by $\widehat{\mathbb{G}}$ the set of all equivalence classes of unitary co-representations of $\mathbb{G}$. If a  family of inequivalent irreducible, unitary co-representations $\{U^s:s\in\mathbb{G}\}$ of $\mathbb{G}$ exhausts all of  $\widehat{\mathbb{G}}$, then the algebra of {\it polynomials}, defined by the linear span of the coefficients of all unitary co-representations
\[
{\rm Pol}(\mathbb{G}):={\rm linear\,\, span}\{u_{jk}\in A:[u_{jk}]\in \widehat{\mathbb{G}}\}
\]
is a {\it Hopf $^*$-algebra}, dense in $A$, with {\it counit} $\epsilon$ and {\it antipode} $S$ determined by
\[
\epsilon(u_{jk}):=\delta_{jk},\quad S(u_{jk}):=u_{kj}^*\qquad [u_{jk}]\in\widehat{\mathbb{G}}
\]
and satisfying the rules
\[
(\epsilon\otimes id)\Delta (a)=a,\qquad (id\otimes \epsilon)\Delta(a)=a,\qquad m_A(S\otimes id)\Delta(a)=\epsilon(a)1_A=m_A(id\otimes S)\Delta(a).
\]
The C$^*$-algebra $C(\mathbb{G})$ of a compact quantum group $\mathbb{G}$ is commutative if and only if it is of the form $C(G)$ for some compact group $G$. In this case counit and antipode are defined by
\[
\epsilon(f):=f(e),\qquad S(f)(s):=f(s^{-1})\qquad s\in G,
\]
where $e\in G$ is the group unit.
\vskip0.2truecm\noindent
Combining the tensor product with the coproduct, one may introduce new operations that in the case of compact group reduce to the well known classical ones.
\vskip0.2truecm\noindent
The {\it convolution} $\xi\ast\xi'\in A^*$ of {\it functionals} $\xi,\xi'\in A^*$ is defined by
\[
\xi\ast\xi':=(\xi\otimes\xi')\circ\Delta
\]
and the {\it convolution} $\xi\ast a\in A^*$ of a {\it functional} $\xi\in A^*$ and an element $a\in A$ is defined by
\[
\xi\ast a=(id\otimes \xi)(\Delta a)\qquad a\ast\xi:=(\xi\otimes id)(\Delta a).
\]
By a fundamental result of S.L. Woronowicz, on a compact quantum group $\mathbb{G}$ there exists a unique (Haar) state $h\in A^*_+$ which is both {\it left and right translation invariant} in the sense that
\[
a\ast h=h\ast a=h(a)1_A\qquad a\in A=C(\mathbb{G}).
\]
In the commutative case the Haar state reduces to the integral with respect to the Haar probability measure. However, in general, the Haar state is not even a trace but it is a $(\sigma, -1)$-KMS state with respect to a suitable automorphisms group $\sigma_t\in {\rm Aut}(A)$, $t\in\mathbb{R}$,
\[
h(ab)=h(\sigma_{-i}(b)a)\qquad a,b\in {\rm Pol}(\mathbb{G}).
\]
By a result of S.L. Woronowicz, the antipode $S:{\rm Pol}(\mathbb{G})\to C(\mathbb{G})$ is a densely defined, closable operator on $A$ and its closure $\bar S$ admits the polar decomposition
\[
\bar S=R\circ \tau_{i/2}
\]
where\\
i) $\tau_{i/2}$ generates a $^*$-automorphisms group $\tau:=\{\tau_t:t\in\R\}$ of the C$^*$-algebra $A$ and\\
ii) $R$ is a linear, anti-multiplicative, norm preserving involution on $A$ commuting with $\tau$, called {\it unitary antipode}.

\subsubsection{$SU_q(2)$ compact quantum group}
The compact quantum group $SU_q(2)$ with $q\in (0,1]$, is defined as the universal C$^*$-algebra  generated by the coefficients of a matrix
\[
U=
\begin{bmatrix}
\alpha & -q\gamma^*\\
\gamma & \alpha^*
\end{bmatrix}
\]
subject to the relations ensuring unitarity: $UU^*=U^* U=I$. Then one may check that, in terms of the generators $\alpha,\gamma$, all the other relevant objects are determined by
\vskip0.1truecm\noindent
i) comultiplication: $\Delta(\alpha):=\alpha\otimes\alpha +\gamma\otimes\gamma$, $\quad\Delta(\gamma):=\gamma\otimes\alpha + \alpha^*\otimes\gamma$\\
ii) counit: $\epsilon(\alpha):=1$, $\quad \epsilon(\gamma):=0$\\
iii) antipode: $S(\alpha):=\alpha^*$, $\quad S(\gamma):=-q\gamma$, $\quad S(u_{j,k}):=(-q)^{(j-k)}u_{-k,j}$ for $[u_{jk}]\in \widehat{\mathbb{G}}$\\
iv) Haar state: $h(u_{jk}):=0$ for $[u_{jk}]\in \widehat{\mathbb{G}}$\\
v) automorphisms group: $\sigma_z(u_{jk}):=q^{2iz(j+k)}u_{jk}$ for $[u_{jk}]\in \widehat{\mathbb{G}}$ and $z\in\mathbb{C}$\\
vi) unitary antipode: $R(u_{jk}):=q^{k-j}u_{jk}^*$  for $[u_{jk}]\in \widehat{\mathbb{G}}$.
\vskip0.1truecm\noindent
When $q=1$ one recovers the classical compact group $SU(2)$.
\subsubsection{Countable discrete groups as CQGs}
Let $\Gamma$ be a countable discrete group and $\lambda:\Gamma\to B(l^2(\Gamma))$ its left regular representation
\[
\lambda_s:l^2(\Gamma)\to l^2(\Gamma)\qquad \lambda_s(\delta_t):=\delta_{st}\qquad s,t\in\Gamma.
\]
The reduced C$^*$-algebra $C^*_r(\Gamma)\subset B(l^2(\Gamma))$ is the smallest C$^*$-algebra containing all the unitary operators $\lambda_s$ for $s\in\Gamma$. If instead of the regular representation one uses the direct sum of all cyclic unitary representation of $\Gamma$, the resulting algebra is called the universal C$^*$-algebra. It is isomorphic to the regular one if and only if $\Gamma$ is amenable.\\
A compact quantum group structure on C$^*_r(\Gamma)$ is obtained extending to a $^*$-homomorphism $\Delta$ from C$^*_r(\Gamma)$ to $C^*_r(\Gamma)\otimes C^*_r(\Gamma)$ the map defined by $\Delta(\lambda_s):=\lambda_s\otimes\lambda_s$ for $s\in\Gamma$. The linear span of the unitaries $\lambda_s$ for $s\in\Gamma$ is a dense $^*$-Hopf agebra on which counit and antipode are defined as  $\epsilon(\lambda_s)=1$ and $S(\lambda_s):=\lambda_{s^{-1}}$ for $s\in\Gamma$. The compact quantum group $C^*_r(\Gamma)$ is {\it cocommutative} in the sense that the comultiplication $\Delta$ is invariant under the flip of the left and right factors of $C^*_r(\Gamma)\otimes C^*_r(\Gamma)$. A theorem of Woronowicz ensures that any cocommutative compact quantum group $C(\mathbb{G})$ is essentially the C$^*$-algebra of a countable discrete group in the sense that there exists a countable discrete group $\Gamma$ and $^*$-homomorphisms $C^*(\Gamma)\to C(\mathbb{G})\to C^*_r(\Gamma)$. The CQG $C^*_r(\Gamma)$ is of Kac type and the Haar state coincides with the trace determined by $\tau(\delta_s)=0$ for $s\neq e$ and $\tau(\delta_e)=1$.

\subsection{L\'evy processes on Compact Quantum Groups}([CFK]).
The L\'evy processes on compact groups are among the most investigated stochastic processes in classical probability. We briefly describe in this section a class of quantum stochastic processes, in the sense of [AFL] (see also [GS]), on compact quantum groups that generalize the classical L\'evy processes.
\vskip0.2truecm
Let $(P,\Phi)$ be a von Neumann algebra with a faithful, normal state, also called a {\it noncommutative probability space}.
\vskip0.2truecm\noindent
i) A {\it random variable} on $\mathbb{G}$ is a $^*$-algebra homomorphism $j:{\rm Pol}(\mathbb{G})\to P$\\
ii) the {\it distribution} of the random variable is the state $\phi_j:=\Phi\circ j$ on ${\rm Pol}(\mathbb{G})$\\
iii) the {\it convolution} $j_1\ast j_2$ of the random variable $j_1,j_2:{\rm Pol}(\mathbb{G})\to P$ is the random variable
\[
j_1\ast j_2:=m_P\circ (j_1\otimes j_2)\circ \Delta
\]
where $m_P$ is the product in $P$.
\vskip0.2truecm\noindent
A Quantum Stochastic Process ([AFL]) is a family of random variables $\{j_{s,t}:0\le s\le t\}$ satisfying
\vskip0.1truecm\noindent
i) $j_{tt}=\epsilon 1_P$ for all $0\le t$\\
ii) {\it increment property}: $j_{rs}\ast j_{st}=j_{rt}$ for all $0\le r\le s\le t$\\
iii) {\it weak continuity}: $j_{tt}\to j_{ss}$ in distribution as $t\to s$ decreasing.
\begin{defn}(Quantum L\'evy Processes)
A L\'evy process on a CQG $\mathbb{G}$ is a quantum stochastic process on the Hopf-algebra ${\rm Pol}(\mathbb{G})$ such that it has
\vskip0.1truecm\noindent
i) {\it independent increments} in the sense that for disjoint intervals $(s_k,t_k]$, $k=1,\cdots ,n$
\[
\Phi(j_{s_1t_1}(a_1)\cdots j_{s_nt_1}(a_n))=\Phi(j_{s_1t_1}(a_1))\cdots \Phi(j_{s_nt_1}(a_n))
\]
ii) {\it stationary increments} in the sense that the distribution $\phi_{st}=\Phi\circ j_{st}$ depends only on $t-s$.
\end{defn}
\begin{thm}
Under a suitable probabilistic notion of equivalence of quantum stochastic processes, equivalence classes of L\'evy processes $\{j_{s,t}:0\le s\le t\}$ on a compact quantum group $\mathbb{G}$ are in one-to-one correspondence with those Markovian semigroups $\{S_t:0<t\}$ on the C$^*$-algebra $C(\mathbb{G})$ which are translation invariant in the sense that
\[
\Delta\circ S_t=(id\otimes S_t)\circ\Delta\qquad t>0.
\]
\end{thm}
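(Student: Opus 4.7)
The plan is to factor the stated bijection through the intermediate class of \emph{convolution semigroups of states} $\{\phi_t\}_{t\ge 0}$ on the Hopf $^*$-algebra $\mathrm{Pol}(\mathbb{G})$, meaning $\phi_0=\epsilon$, $\phi_{s+t}=\phi_s\ast \phi_t$, each $\phi_t$ positive and unital, with appropriate weak continuity as $t\downarrow 0$. The argument then splits into two bijections.

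First I would show that translation invariant Markovian semigroups on $C(\mathbb{G})$ correspond bijectively to such convolution semigroups of states. Given $\{S_t\}$, applying $id \otimes \epsilon$ to the invariance identity $\Delta \circ S_t = (id \otimes S_t) \circ \Delta$ and using the counit relation $(id \otimes \epsilon) \circ \Delta = id$ yields
\[
S_t = (id \otimes \phi_t) \circ \Delta, \qquad \phi_t := \epsilon \circ S_t.
\]
Coassociativity combined with $S_{s+t}=S_s\circ S_t$ then forces $\phi_{s+t}=\phi_s\ast\phi_t$, while Markovianity of $S_t$ makes each $\phi_t$ a state (complete positivity and unitality). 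The reverse assignment uses the same formula $S_t = (id\otimes\phi_t)\circ\Delta$: translation invariance follows from a single application of coassociativity, and the semigroup property from the convolution identity.

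Next I would show that equivalence classes of Lévy processes correspond bijectively to the same convolution semigroups of states, via the one-dimensional marginals $\phi_t := \Phi \circ j_{0,t}$. Stationarity and independence of increments, together with the increment relation $j_{r,s}\ast j_{s,t}=j_{r,t}$, give $\phi_{s+t}=\phi_s\ast \phi_t$; each $j_{s,t}$ being a unital $^*$-homomorphism into $(P,\Phi)$ makes $\phi_t$ a state. In the reverse direction, given $\{\phi_t\}$ the joint moments of any prospective Lévy process are prescribed by
\[
\Phi\bigl(j_{t_0,t_1}(a_1)\cdots j_{t_{n-1},t_n}(a_n)\bigr) = (\phi_{t_1-t_0}\otimes\cdots\otimes \phi_{t_n-t_{n-1}})\circ \Delta^{(n-1)}(a_1\cdots a_n),
\]
and one invokes Schürmann's reconstruction theorem (the Hopf-algebraic analogue of the Daniell--Kolmogorov extension theorem, realizing $P$ on a Fock space built from the Schürmann triple associated with the generator of $\{\phi_t\}$) to produce a Lévy process realizing these moments, unique up to the tailored notion of equivalence.

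Composing the two bijections yields the explicit correspondence $S_t = (id\otimes (\Phi\circ j_{0,t}))\circ \Delta$. The principal obstacle is the reconstruction step: building a concrete noncommutative probability space $(P,\Phi)$ and $^*$-homomorphisms $j_{s,t}$ realizing the prescribed joint moments cannot be handled by a naive Kolmogorov-type argument, and the notion of equivalence of quantum stochastic processes must be chosen precisely so that these finite-dimensional distributions determine the process up to equivalence. A secondary point to verify is the passage between the polynomial algebra $\mathrm{Pol}(\mathbb{G})$, on which the Lévy process naturally lives, and the C$^*$-algebra $C(\mathbb{G})$, on which the semigroup is defined; since $\mathrm{Pol}(\mathbb{G})$ is norm-dense and $S_t$ is contractive, the formula $S_t=(id\otimes \phi_t)\circ\Delta$ extends uniquely provided the convolution semigroup is sufficiently regular, which follows from the weak continuity assumption on $\{j_{s,s}\}$.
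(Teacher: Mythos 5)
Your proposal follows essentially the same route as the paper: both factor the correspondence through convolution semigroups of states on ${\rm Pol}(\mathbb{G})$, with $\phi_t=\epsilon\circ S_t=\Phi\circ j_{0,t}$ in one direction and $S_t a=\phi_t\ast a=(id\otimes\phi_t)(\Delta a)$ in the other. Where the paper's sketch recovers the distributions through the generating functional $G(a)=\frac{d}{dt}\phi_t(a)\bigl|_{t=0}$ and the convolution exponential $\phi_t=\exp_*(tG)$, you delegate the existence half directly to Sch\"urmann's Fock-space reconstruction; that is a legitimate shortcut, and indeed it is the actual substance behind the claimed bijection in [CFK], which the paper's sketch leaves implicit.

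Two concrete corrections are needed, however. First, your displayed joint-moment formula is wrong: by the independence axiom (consecutive intervals $(t_{k-1},t_k]$ are disjoint), the left-hand side factorizes as
\[
\Phi\bigl(j_{t_0,t_1}(a_1)\cdots j_{t_{n-1},t_n}(a_n)\bigr)=\phi_{t_1-t_0}(a_1)\cdots\phi_{t_n-t_{n-1}}(a_n),
\]
whereas your right-hand side $(\phi_{t_1-t_0}\otimes\cdots\otimes\phi_{t_n-t_{n-1}})\circ\Delta^{(n-1)}(a_1\cdots a_n)$ equals, by the convolution semigroup property, $\phi_{t_n-t_0}(a_1\cdots a_n)$, i.e.\ the one-dimensional marginal of the product; the two differ in general, so the prescription as written is inconsistent with the independence axiom. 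The genuinely nontrivial joint moments are those of overlapping variables, e.g.\ for $s\le t$, splitting $j_{0,t}=j_{0,s}\ast j_{s,t}$ and using independence gives
\[
\Phi\bigl(j_{0,s}(a)\,j_{0,t}(b)\bigr)=(\phi_s\otimes\phi_{t-s})\bigl((a\otimes 1)\Delta b\bigr),
\]
with the coproduct interleaved between the arguments rather than applied to their product. Since Sch\"urmann's theorem takes the convolution semigroup (equivalently a Sch\"urmann triple) as input rather than a prescribed moment table, this slip does not derail the plan, but it must be repaired. Second, setting $\phi_t:=\epsilon\circ S_t$ needs justification: the counit is a priori defined only on ${\rm Pol}(\mathbb{G})$ and need not extend continuously to $C(\mathbb{G})$ (it does not in the reduced picture when $\mathbb{G}$ is not coamenable), so one must first show that a translation invariant Markovian semigroup leaves ${\rm Pol}(\mathbb{G})$ --- in fact each coefficient space of an irreducible unitary co-representation --- invariant. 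Your density-plus-contractivity remark handles the extension direction ${\rm Pol}(\mathbb{G})\to C(\mathbb{G})$, but not this restriction direction, which is where the real work in that half of the argument lies.
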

To illustrate the main steps of the correspondence, notice first that the distributions of the process $\phi_t:=\Phi\circ j_{0t}$ form a continuous convolution semigroup on ${\rm Pol}(\mathbb{G})$
\[
\phi_0=\epsilon,\qquad \phi_s\ast\phi_t=\phi_{s+t},\qquad \lim_{t\to 0^+}\phi_t(a)=\epsilon (a)\qquad a\in {\rm Pol}(\mathbb{G})
\]
and that the {\it generating functional} of the process is then defined as
\[
G:D(G)\to\mathbb{C}\qquad G(a):=\frac{d}{dt}\phi_t(a)\Bigr|_{t=0}
\]
on a dense domain $D(G)\subseteq {\rm Pol}(\mathbb{G})$. From it one can reconstruct the distribution of the process as a convolution exponential
\[
\phi_t={\rm exp}_*(tG):=\epsilon +\sum_{n=1}^\infty \frac{t^n}{n!}G^{*n}\qquad t>0,
\]
a semigroup on ${\rm Pol}(\mathbb{G})$ by
\[
S_ta=\phi_t\ast a\qquad a\in {\rm Pol}(\mathbb{G}),\quad t>0
\]
and its formal generator $L:{\rm Pol}(\mathbb{G})\to {\rm Pol}(\mathbb{G})$ as $L(a):=G\ast a$. Then one checks that the semigroup extends to a strongly continuous, translation invariant Markovian semigropup on the C$^*$-algebra $C(\mathbb{G})$ and that its generator is the closure of $L$. Moreover, the distribution and the generating functional can be written directly in terms of the semigroup and its generator
\[
\phi_t=\epsilon\circ S_t,\qquad G=\epsilon\circ L.
\]
The KMS-symmetry of the semigroup of a L\'evy process can checked using the generating functional as follows
\begin{thm}
Let $\{S_t:t>0\}$ be the Markovian semigroup of a L\'evy process $\{j_{s,t}:0\le s\le t\}$ on compact quantum group $\mathbb{G}$. The following properties are then equivalent
\vskip0.1truecm\noindent
i) the semigroup is $(\sigma^h,-1)$-KMS symmetric\\
ii) the generating functional is invariant under the action of the unitary antipode
\[
G=G\circ R
\]
on the Hopf $^*$-algebra ${\rm Pol}(\mathbb{G})$.
\end{thm}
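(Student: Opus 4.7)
The strategy is a two-step reduction. First I would translate the KMS-symmetry identity for $S_t$ on $C(\mathbb{G})$ into a symmetry condition on the distribution $\phi_t$, then pass from $\phi_t$ to its generator $G$ via the convolution exponential $\phi_t = \exp_\ast(tG)$. For the first step: writing $S_t a = a_{(1)}\phi_t(a_{(2)})$ in Sweedler notation (translation invariance) and unpacking
\[
h(b\, S_t a) \;=\; h\bigl(\sigma^h_{i/2}(a)\, S_t(\sigma^h_{-i/2}(b))\bigr)
\]
with (a) the commutation $\Delta\circ\sigma^h_z = (\sigma^h_z\otimes\sigma^h_z)\circ\Delta$, (b) the KMS identity $h(xy) = h(y\sigma^h_{-i}(x))$, and (c) the $\sigma^h$-invariance of both $h$ and $\phi_t = \epsilon\circ S_t$ (using $\epsilon\circ\sigma^h_t = \epsilon$), the modular factors $\sigma^h_{\pm i/2}$ cancel and the identity collapses to
\[
h(b\, a_{(1)})\,\phi_t(a_{(2)}) \;=\; h(b_{(1)}\, a)\,\phi_t(b_{(2)}),\qquad a,b\in \mathrm{Pol}(\mathbb{G}).
\]
One then shows this is equivalent to $\phi_t\circ R = \phi_t$, either by invoking Woronowicz strong left invariance combined with $h\circ R = h$, or concretely by expanding both sides over the Peter--Weyl decomposition $\mathrm{Pol}(\mathbb{G}) = \bigoplus_\alpha \mathrm{span}\{u^\alpha_{ij}\}$ and using the orthogonality relations, which encode the action of $R$ on matrix coefficients through the Woronowicz matrices $F_\alpha$.

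\textbf{Second reduction.} From the Hopf-algebra identity $\Delta\circ R = \mathrm{flip}\circ (R\otimes R)\circ\Delta$ I would derive, for any $\psi,\chi\in\mathrm{Pol}(\mathbb{G})^*$, the convolution rule
\[
(\psi\circ R)\ast(\chi\circ R) \;=\; (\chi\ast\psi)\circ R.
\]
Taking $\psi = \chi = G$ gives $(G\circ R)^{\ast n} = G^{\ast n}\circ R$ for every $n\ge 1$; together with $\epsilon\circ R = \epsilon$ this yields $\phi_t\circ R = \exp_\ast(t\,G\circ R)$. Hence $\phi_t\circ R = \phi_t$ for every $t>0$ iff $G\circ R = G$, one direction by differentiating the equality of convolution exponentials at $t=0^+$ and the other by the exponential formula itself.

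\textbf{Main obstacle.} The second reduction is formal bookkeeping. The genuine work lies in the first one, where the KMS-symmetry identity carries modular factors $\sigma^h_{\pm i/2}$ whereas the target identity $\phi_t = \phi_t\circ R$ is modular-free. The bridge is the relation $R\circ\sigma^h_z = \sigma^h_{-z}\circ R$ on $\mathrm{Pol}(\mathbb{G})$, a consequence of Woronowicz's polar decomposition $S = R\circ\tau_{-i/2}$ together with the commutation of $R$ with the scaling group $\tau$; combined with strong invariance of the Haar state, it licences the passage from the modular identity to the algebraic one. Verifying this on all of $\mathrm{Pol}(\mathbb{G})$, rather than on a single matrix coefficient, is the most delicate point, and is most transparently handled via the Peter--Weyl decomposition.
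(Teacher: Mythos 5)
First, a caveat on the comparison: the paper states this theorem without proof, deferring to [CFK], so your proposal can only be measured against that source. Your global skeleton is the right one and coincides with [CFK]'s: KMS-symmetry of $\{S_t\}$ is equivalent to $\phi_t\circ R=\phi_t$ for all $t>0$, which in turn is equivalent to $G\circ R=G$. Your second reduction is correct as it stands: $\Delta\circ R=\mathrm{flip}\circ(R\otimes R)\circ\Delta$ does give $(\psi\circ R)\ast(\chi\circ R)=(\chi\ast\psi)\circ R$, hence $\phi_t\circ R=\exp_\ast(t\,G\circ R)$, and since every element of ${\rm Pol}(\mathbb{G})$ lies in a finite-dimensional subcoalgebra, the convolution exponential and its derivative at $t=0^+$ are unproblematic.

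The genuine gap is in your first reduction, at item (a). The modular group of the Haar state does \emph{not} commute diagonally with the coproduct: the correct relations are
\[
\Delta\circ\sigma^h_z=(\tau_z\otimes\sigma^h_z)\circ\Delta,
\qquad
\Delta\circ\tau_z=(\tau_z\otimes\tau_z)\circ\Delta
\]
(equivalently, in terms of the Woronowicz characters, $\sigma^h_z=f_{iz}\ast\cdot\ast f_{iz}$ while $\tau_z=f_{iz}\ast\cdot\ast f_{-iz}$); your relation $\Delta\circ\sigma^h_z=(\sigma^h_z\otimes\sigma^h_z)\circ\Delta$ holds only in the Kac case. Because of this, the modular factors do not cancel, and the ``collapsed'' identity you arrive at is the wrong intermediate statement. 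To see this concretely, test both conditions on a Peter--Weyl block with diagonal $F$-matrix $F_\alpha={\rm diag}(\lambda_1,\dots,\lambda_{n_\alpha})$, so that $\sigma^h_t(u_{jk})=(\lambda_j\lambda_k)^{it}u_{jk}$, $\tau_t(u_{jk})=(\lambda_j/\lambda_k)^{it}u_{jk}$ and $R(u_{jk})=\sqrt{\lambda_k/\lambda_j}\,u_{kj}^*$ (in Kustermans--Vaes sign conventions). The orthogonality relations turn the KMS condition $h(b\,S_ta)=h\bigl(\sigma^h_{i/2}(a)\,S_t(\sigma^h_{-i/2}(b))\bigr)$ into
\[
\phi_t(u_{mk})=\sqrt{\lambda_k/\lambda_m}\;\overline{\phi_t(u_{km})},
\]
which is exactly $\phi_t\circ R=\phi_t$, whereas your identity $h(b\,a_{(1)})\phi_t(a_{(2)})=h(b_{(1)}a)\phi_t(b_{(2)})$ is equivalent to $\phi_t(u_{mk})=\overline{\phi_t(u_{km})}$, i.e. to $\phi_t\circ S=\phi_t$ --- the GNS-symmetry condition, which agrees with the KMS one only when all $\lambda_j$ coincide. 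So step 1 as written proves a different theorem, and the subsequent appeal to strong invariance plus $h\circ R=h$ cannot rescue it, because the statement being fed into it is already modular-free. The repair is to use the tools you defer to the ``Main obstacle'' paragraph --- the correct $\Delta$--$\sigma^h$ commutation, the invariances $h\circ\sigma^h_z=h\circ\tau_z=h$, and the polar decomposition of the antipode --- \emph{inside} the computation: the surviving half-power $\sqrt{\lambda_k/\lambda_m}$ is exactly the $\tau_{i/2}$-discrepancy between $S$ and $R$, i.e. the reason the theorem features the unitary antipode rather than the antipode.
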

If the above conditions are verified then one can proceed to construct the Dirichlet form associated to the L\'evy process. The differential structure of these Dirichlet forms and the generating functional can be described in terms of the Sch\"urmann cocycle (see [CFK]) but we do not pursue it here.
\vskip0.2truecm\noindent
Rather, we prefer to conclude this section with examples of Dirichlet forms on a class of compact quantum groups whose spectrum has been completely determined with application to the approximation properties treated in a previous section.
\subsubsection{Free orthogonal Quantum Groups}
The universal C$^*$-algebra $C_u(O^+_N)$ of the free orthgonal quantum group of Wang $O^+_N$, $N\ge 2$,  is generated by a set of $N^2$ self-adjoint elements $\{v_{jk}:j,k=1,\cdots, N\}$ subject to the relations which ensure that the matrix $[v_{jk}]$ is unitary
\[
\sum_{l=1}^N v_{lj}v_{lk}=\delta_{jk}=\sum_{l=1}^N v_{jl}v_{kl}
\]and where a coproduct is defined as $\Delta v_{jk}:=\sum_{l=1}^N v_{lj}\otimes v_{lk}$. The Haar state is a trace which is faithful on the Hopf algebra but not on $C_u(O^+_N)$ so that the L\'evy semigroup is considered on the {\it reduced} C$^*$-algebra $C_r(O^+_N)$, defined by the GNS representation of the Haar state. The set of equivalence classes of irreducible, unitary co-representations is indexed by $\mathbb{N}$. Denoting by $\{U_s:s\in\mathbb{N}\}$ the Chebyshev polynomial on the interval $[-N,N]$ defined recursively as
\[
U_0(x)=1,\qquad U_1(x):=x,\qquad U_n(x)=xU_{n-1}(x)-U_{n-2}\qquad n\ge 2,
\]
a generating functional is then defined by
\[
G(u^n_{jk}):=\delta_{jk}\frac{U'_n(N)}{U_n(N)}\qquad \quad j,k=1,\cdots,U_n(N),\quad n\in\mathbb{N}.
\]
It can be proved that the associated Dirichlet form has {\it discrete spectrum} whose eigenvectors are the coefficients $u^n_{jk}$ of the irreducible, unitary co-representations and such that the corresponding eigenvalues and multiplicities are
\[
\lambda_n:=\frac{U'_n(N)}{U_n(N)},\qquad m_n:=(U_n(N))^2.
\]
By the results of a previous section, this implies that the von Neumann algebras $L^\infty(C^*_r(O^+_N),\tau)$ generated by the GNS representation of the Haar trace states, all have the Haagerup Property. In particular, however, since for $N=2$ one has $\lambda_n=\frac{n(n+2)}{6}$ and $m_n=(n+1)^2$, it results that $L^\infty(C^*_r(O^+_2),\tau)$ is amenable. The amenability of the free orthogonal quantum groups have been proved for the first time by M. Brannan [Bra].

\subsubsection{Property (T) of locally compact quantum groups and boundedness of Dirichlet forms}
We conclude this exposition describing succinctly a recent result of A. Skalski and A. Viselter [SV] connecting the Property (T) of the von Neumann algebra of a quantum group to the boundedness of translation invariant Dirichlet forms. Their framework is more general than the one treated in this section as they consider the {\it locally compact quantum groups} $\mathbb{G}=(M,\Delta, \varphi_H)$, in the von Neumann algebra setting, of J. Kustermans and S. Vaes [KV].\\
The main difference with respect to the S.L. Woronowicz theory of compact quantum groups is that the Haar weight  $\varphi_H$ (in general no more a state) is, together with the coproduct operation $\Delta$, part of the structure of a locally compact quantum group $\mathbb{G}$. This causes a lack of certain common, dense, natural domain for generators, generating functionals and quadratic forms so that a subtler analysis is required.\\
The von Neumann algebra $M$ (resp. its standard space $L^2(M)$) is often indicated as $L^\infty(\mathbb{G})$ (resp. $L^2(\mathbb{G})$) or as $L^\infty(\mathbb{G},\varphi_H)$ (resp. $L^2(\mathbb{G}, \varphi_H)$) to emphasize the reference to the chosen Haar weight.\\
From the point of view of potential theory, the unboundedness of the Plancherel weight necessitates of the extension of the theory of Dirichlet forms with respect to weights on von Neumann algebras, developed by S. Goldstein and J.M. Lindsay in [GL3] (and amended in [SV Appendix]). We do not describe the details of this theory here but we just notice that in case the Plancherel weight $\varphi_H$ is a trace we may use the theory illustrated in Section 4.
\vskip0.2truecm\noindent
The following result, obtained in [SV Theorem 4.6], characterizes the Property (T) of von Neumann algebras of separable locally compact quantum groups (defined in [F] for discrete quantum groups and for general locally compact ones in [DFSW]) in terms of a spectral property of the completely Dirichlet forms.
\begin{thm}
Let $\mathbb{G}$ be a locally compact quantum group such that $L^2(\mathbb{G},\varphi_H)$ is separable. Then the following properties are equivalent
\vskip0.2truecm\noindent
i) the von Neumann algebra $L^\infty(\mathbb{G},\varphi_H)$ has the property (T)
\vskip0.2truecm\noindent
ii) any translation invariant completely Dirichlet form on $L^2(\mathbb{G}, \varphi_H)$ is bounded.
\end{thm}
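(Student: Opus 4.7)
The plan is to set up a quantum-group Delorme--Guichardet dictionary translating a translation invariant completely Dirichlet form on $L^{2}(\mathbb{G},\varphi_{H})$ into a $1$-cocycle of a unitary corepresentation of $\mathbb{G}$, and to match Property (T) with the coboundary-triviality of such cocycles. By Theorem 6.7 (in the weight-extended form of [GL3] with the amendments of [SV, Appendix]), such a form corresponds to a translation invariant, $(\sigma^{\varphi_{H}},-1)$-KMS-symmetric, completely Markovian semigroup $\{S_{t}\}$ on $L^{\infty}(\mathbb{G})$; by Theorem 5.5 it is represented by a closable symmetric derivation $\partial\colon D(\partial)\to\H$ into a Connes correspondence of $L^{\infty}(\mathbb{G})$. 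Translation invariance $(\Delta\otimes \mathrm{id})\circ S_{t}=(\mathrm{id}\otimes S_{t})\circ\Delta$ forces $\H$ to carry the bimodule structure of $\mathcal{H}_{\pi}\otimes L^{2}(\mathbb{G})$ (left action via a unitary corepresentation $\pi$ coupled with the right regular action, generalising the free-group construction of Section 5.1.1), and $\partial$ to be a $1$-cocycle $c$ for $\pi$. Boundedness of $\E$ is then equivalent to boundedness of $\partial$, and, via a Schoenberg-type argument, to $c$ being an inner cocycle $c(a)=\pi(a)\xi-\epsilon(a)\xi$.

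\textbf{Direction (i)$\Rightarrow$(ii).} Assume that $L^{\infty}(\mathbb{G})$ has Property (T). Given a translation invariant completely Dirichlet form with cocycle $c$, consider the family of unitary corepresentations constructed from the rescalings $t^{-1/2}c$ as $t\downarrow 0$, equivalently from the convolution roots of the semigroup $\{S_{t}\}$. The cocycle identity combined with the semigroup continuity $S_{t}\to\mathrm{id}$ ensures that these corepresentations admit unit vectors that are almost fixed under $\pi$. Property (T), in the Skalski--Viselter sense for locally compact quantum groups, then forces genuine invariant vectors to exist, which is precisely the statement that $c$ is inner. Consequently $\partial$ is bounded and so is $\E$.

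\textbf{Direction (ii)$\Rightarrow$(i).} Argue by contraposition. If $L^{\infty}(\mathbb{G})$ lacks Property (T), then there exists a unitary corepresentation $\pi$ of $\mathbb{G}$ admitting almost invariant but no invariant vectors. A Guichardet-type construction (an affine-isometric action built on an infinite direct sum of suitable twists of $\pi$) produces an unbounded, symmetric $1$-cocycle $c$ for an amplification of $\pi$. By Theorem 5.2, transplanted from the C$^{*}$-derivation setting to the corepresentation/weight setting via the dictionary of Step 1, the form $\E[a]:=\|\partial_{c}a\|^{2}$ is translation invariant, KMS-symmetric and completely Dirichlet on $L^{2}(\mathbb{G},\varphi_{H})$; unboundedness of $c$ yields unboundedness of $\E$, contradicting (ii).

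\textbf{Main obstacle.} The hard part is executing Step 1 in the full generality of locally compact quantum groups, where $\varphi_{H}$ is a (generally unbounded) weight rather than a state and no global Hopf $^{*}$-algebra of polynomials is at hand. The required dictionary rests on the weight-extended Dirichlet form theory of Goldstein--Lindsay (with the corrections in [SV, Appendix]), on rephrasing translation invariance and the cocycle equation through the multiplicative unitary of $\mathbb{G}$, and on verifying that the symmetric embedding $i_{\varphi_{H}}$ intertwines $S_{t}$ with the semigroup on $L^{2}(\mathbb{G},\varphi_{H})$; the classical Delorme--Guichardet/Schoenberg machinery does not transfer verbatim, and it is this passage that carries essentially all the analytic content of the theorem.
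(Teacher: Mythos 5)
The first thing to say is that the paper contains no proof of this theorem: it is stated as a quotation of [SV, Theorem 4.6] of Skalski--Viselter, so there is no internal argument to measure your attempt against. That said, your outline does follow the same Delorme--Guichardet route as the cited source: translation invariant completely Dirichlet forms correspond to KMS-symmetric, translation invariant, completely Markovian semigroups, hence to convolution semigroups and generating functionals, hence to cocycles of unitary corepresentations, and Property (T) is matched with boundedness (innerness) of such cocycles. You also correctly locate the analytic weight of the theorem in the passage from states to the Haar weight via the Goldstein--Lindsay theory as amended in [SV, Appendix], which the paper itself flags as the main new difficulty in the locally compact setting.

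As a proof, however, what you have is a program whose key lemmas are announced rather than established, and two of the gaps are substantive. First, in (ii)$\Rightarrow$(i) your Guichardet-type construction must deliver not merely an unbounded cocycle but one whose generating functional is invariant under the unitary antipode $R$ (the condition $G=G\circ R$ of the paper's theorem on KMS-symmetry of L\'evy semigroups): only then is the associated form a KMS-symmetric, translation invariant, \emph{completely Dirichlet} form, which is what statement (ii) quantifies over. Symmetrizing, say by $G\mapsto \tfrac{1}{2}(G+G\circ R)$, while preserving conditional negativity and unboundedness is a step that must be checked, not assumed, and it is here (together with the exhaustion argument over almost-invariant vectors) that separability of $L^2(\mathbb{G},\varphi_H)$ is actually used. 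Second, in (i)$\Rightarrow$(ii) you implicitly use that a cocycle of a unitary corepresentation is inner if and only if it is bounded, and that the corepresentations built from the rescaled cocycle carry almost-invariant vectors in the precise sense in which Property (T) is formulated in [DFSW]; both points need a fixed-point (circumcenter) argument and a verification in the corepresentation setting, and neither is automatic. Third, your appeals to the paper's correspondence theorems are off-register: the standard-form correspondence of Section 6 is proved for a faithful normal \emph{state}, and the derivation theorems of Section 5 for C$^*$-algebras with a \emph{trace}; neither applies as stated to the unbounded Haar weight, so your Step 1 --- which you yourself identify as carrying essentially all the content --- cannot be obtained by transplantation but must be rebuilt from the weight theory, exactly as [SV] do. In short: the strategy is the right one and matches the actual source, but the proposal is a correct blueprint rather than a proof.
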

As in the compact case, the translation invariance of the Dirichlet form may be expressed as the invariance of the associated generating functional with respect to the unitary antipode.

\renewcommand{\abstractname}{Acknowledgements}
\begin{abstract}
The author wishes to thanks the referee for her/his careful and patient work.
\end{abstract}


\newpage
\normalsize
\section{References}


\normalsize
\begin{enumerate}

\bibitem[AFL]{AFL} L. Accardi, A. Frigerio, J.T. Lewis, \newblock{Quantum stochastic processes},
\newblock{\it Publ. Res. Inst. Math. Sci.} {\bf 18}, n. 1,  {\rm (1982)}, 97--133.

\bibitem[AHK1]{AHK1} S. Albeverio, R. Hoegh-Krohn, \newblock{Dirichlet forms and Markovian semigroups on C$^*$--algebras},
\newblock{\it Comm. Math. Phys.} {\bf 56} {\rm (1977)}, 173-187.

\bibitem[AHK2]{AHK2} S. Albeverio, R. Hoegh-Krohn, \newblock{Frobenius theory for positive maps on von Neumann algebras},
\newblock{\it Comm. Math. Phys.} {\bf 64} {\rm (1978)}, 83-94.

\bibitem[AP]{AP} C. Anantharaman, S. Popa, \newblock{An introduction to II1 factors.},
\newblock{Draft}

\bibitem[Ara]{Ara} H. Araki, \newblock{Some properties of modular conjugation operator of von Neumann algebras and a non-commutative Radon-Nikodym theorem with a chain rule},
\newblock{\it Pacific J. Math.} {\bf 50} {\rm (1974)}, 309-354.

\bibitem[Arv]{Ar} W. Arveson, \newblock{``An invitation to C$^*$-algebra''},
Graduate Text in Mathematics 39, x + 106 pages,
\newblock{Springer-Verlag, Berlin, Heidelberg, New York, 1976}.

\bibitem[BKP1]{BKP1} C. Bahn, C.K. Ko, Y.M. Park, \newblock{Dirichlet forms and symmetric Markovian semigroups on $\mathbb{Z}_2$-graded von Neumann algebras}
\newblock{\it Rev. Math. Phys.}
{\bf 15} {\rm (2003)}, no.8, 823-845.

\bibitem[BKP2]{BKP2} C. Bahn, C.K. Ko, Y.M. Park, \newblock{Dirichlet forms and symmetric Markovian semigroups on CCR Algebras with quasi-free states}
\newblock{\it Rev. Math. Phys.}
{\bf 44}, {\rm (2003)}, 723--753.

\bibitem[BD1]{BD1} A. Beurling and J. Deny, \newblock{Espaces de Dirichlet
I: le cas \'el\'ementaire},
\newblock{\it Acta Math.} {\bf 99} {\rm (1958)}, 203-224.

\bibitem[BD2]{BD2} A. Beurling and J. Deny, \newblock{Dirichlet spaces},
\newblock{\it Proc. Nat. Acad. Sci.} {\bf 45} {\rm (1959)}, 208-215.

\bibitem[Bia1]{Bia1} P. Biane, \newblock{Logarithmic Sobolev inequalities, matrix models and free entropy},
\newblock{Acta Math. Sin. (Engl. Ser.)} {\bf 19}, {\bf (2003)}, 497-506.

\bibitem[Bia2]{Bia2} P. Biane, \newblock{Free hypercontractivity},
\newblock{Comm. Math. Phys.} {\bf 184}, {\bf (1997)}, 457-474.

\bibitem[Boz]{Boz} M. Bozejko, \newblock{Positive definite functions on the free group and the noncommutative Riesz product},
\newblock{\it Bollettino U.M.I.} {\bf 5-A} {\rm (1986)}, 13-21.

\bibitem[Bra]{Bra} M. Brannan, \newblock{Approximation properties for free orthogonal and free unitary quantum groups},
\newblock{\it Reine Angew. Math.} {\bf 672}, {\rm (2012) }, 223-251.

\bibitem[BR1]{BR1} Bratteli O., Robinson D.W., \newblock{``Operator algebras and Quantum Statistical Mechanics 1''},
Second edition, 505 pages, \newblock{Springer-Verlag, Berlin,
Heidelberg, New York, 1987}.

\bibitem[BR2]{BR2} Bratteli O., Robinson D.W., \newblock{``Operator algebras and Quantum Statistical Mechanics 2''},
Second edition, 518 pages, \newblock{Springer-Verlag, Berlin,
Heidelberg, New York, 1997}.


\bibitem[CCJJV]{CCJJV} P.A Cherix, M. Cowling, P. Jolissaint, P. Julg, A. Valette, \newblock{Groups with the Haagerup Property.
Gromov's a-T-menability},
\newblock{Progress in Mathematics, Vol. 197, Basel, 2001}.

\bibitem[Ch]{Ch} M. Choda, \newblock{Group factors of the Haagerup type},
\newblock{\it Proc. Jpn. Acad. Ser. A Math. Sci.} {\bf 59}, {\rm (1983) }, 174-177.

\bibitem[CS1]{CS1} M. Caspers, A. Skalski, \newblock{The Haagerup approximation property for von Neumann algebras via quantum Markov semigroups and Dirichlet forms}, \newblock{\it Comm. Math. Phys. } {\bf 336}, no. 3 {\rm (2015) }, 1637-1664.

\bibitem[CS2]{CS2} M. Caspers, A. Skalski, \newblock{The Haagerup property for arbitrary von Neumann algebras},
\newblock{\it  Int. Math. Res. Not. IMRN} {\bf 19} {\rm (2015) }, 9857-9887.

\bibitem[COST]{COST} M. Caspers, R .Okayasu, A. Skalski, R. Tomatsu, \newblock{Generalisations of the Haagerup approximation property to arbitrary von Neumann algebras}, \newblock{\it C. R. Math. Acad. Sci. Paris} {\bf 352}, no. 6 {\rm (2014) }, 507-510.

\bibitem[CF]{CF} Z.Q. Chen, M. Fukushima, \newblock{``Symmetric Markov Processes, Time Change and boundary Theory''},
\newblock{London Mathematical Soc. Monographs, 2012}.

\bibitem[CE]{CE} E. Christensen, D.E. Evans, \newblock{Cohomology of operator algebras and quantum dynamical semigroups},
\newblock{\it  J. London. Math. Soc.} {\bf 20} {\rm (1979)}, 358-368.

\bibitem[Cip1]{Cip1} F. Cipriani, \newblock{Dirichlet forms and Markovian semigroups on standard forms of von Neumann algebras},
\newblock{\it J. Funct. Anal.} {\bf 147} {\rm (1997)}, 259-300.

\bibitem[Cip2]{Cip2} F. Cipriani, \newblock{The variational approach to the Dirichlet problem in C$^*$--algebras},
\newblock{\it Banach Center Publications} {\bf 43} {\rm (1998)}, 259-300.

\bibitem[Cip3]{Cip3} F. Cipriani, \newblock{Perron theory for positive maps and semigroups on von Neumann algebras},
\newblock{\it CMS Conf. Proc., Amer. Math. Soc., Providence, RI} {\bf 29} {\rm (2000)}, 115-123.

\bibitem[Cip4]{Cip4} F. Cipriani, \newblock{Dirichlet forms as Banach algebras and applications},
\newblock{\it  Pacific J. Math.} {\bf 223} {\rm (2006)}, no. 2, 229-249.

\bibitem[Cip5]{Cip5} F. Cipriani, \newblock{``Dirichlet forms on noncommutative spaces''},
\newblock{Lecture Notes in Math.} {\bf 1954} {\rm (2008)}, 161-276.

\bibitem[CFL]{CFL} F. Cipriani, F. Fagnola, J.M. Lindsay,
\newblock{Spectral analysis and Feller property for quantum Ornstein--Uhlenbeck semigroups},
\newblock{\it Comm. Math. Phys.} {\bf 210} {\rm (2000)}, 85-105.

\bibitem[CFK]{CFK} F. Cipriani, U. Franz, A. Kula,
\newblock{Symmetries of L\'evy processes on compact quantum groups, their Markov semigroups and potential theory},
\newblock{\it  J. Funct. Anal.} {\bf 266} {\rm (2014)}, 2789-2844.

\bibitem[CGIS]{CGIS} F. Cipriani, D. Guido, T. Isola, J.-L. Sauvageot, \newblock{Integrals and potentials of differential 1-forms on the Sierpinski gasket}.
\newblock{Adv. Math.} {\bf 239}, ({\bf 2013)}, 128-163.

\bibitem[CS1]{CS1} F. Cipriani, J.-L. Sauvageot, \newblock{Derivations as square roots of Dirichlet forms},
\newblock{\it J. Funct. Anal.} {\bf 201} {\rm (2003)}, no. 1, 78--120.

\bibitem[CS2]{CS2} F. Cipriani, J.-L. Sauvageot, \newblock{Noncommutative potential theory and the sign of the curvature operator in Riemannian geometry},
\newblock{\it Geom. Funct. Anal.} {\bf 13} {\rm (2003)}, no. 3, 521--545.

\bibitem[CS3]{CS3} F. Cipriani, J.-L. Sauvageot, \newblock{Fredholm modules on p.c.f. self-similar fractals and their conformal geometry},
\newblock{\it Comm. Math. Phys.} {\bf 286}, no. 2, {\bf (2009)}, 541-558.

\bibitem[CS4]{CS4} F. Cipriani, J.-L. Sauvageot, \newblock{Variations in noncommutative potential theory: finite-energy states, potentials and multipliers}.
\newblock{Trans. Amer. Math. Soc.} {\bf 367}, {\bf (2015)}, no. 7, 4837-4871.

\bibitem[CS5]{CS5} F. Cipriani, J.-L. Sauvageot, \newblock{Amenability and subexponential spectral growth rate of Dirichlet forms on von Neumann algebras},
\newblock{\it Adv. Math.} {\bf 322} {\rm (2017)}, 308--340.

\bibitem[CS6]{CS6} F. Cipriani, J.-L. Sauvageot, \newblock{Property $(\Gamma)$ of finite factors and Poincar\'e inequality of elementary Dirichlet forms},
\newblock{\it In preparation}.


\bibitem[Co1]{Co1} A. Connes, \newblock{Une classification des facteurs de type III},
\newblock{\it  Annales scientifiques de l'E.N.S. 4e serie} {\bf 2, tome 6} {\rm (1973)},  133-252.

\bibitem[Co2]{Co2} A. Connes, \newblock{Caracterisation des espaces vectoriels ordonn\'es sous-jacents aux algebres de von Neumann},
\newblock{\it  Ann. Inst. Fourier (Grenoble)} {\bf 24} {\rm (1974)}, 121-155.

\bibitem[Co3]{Co3} A. Connes, \newblock{Classification of Injective Factors. Cases $II_1$, $II_\infty$, $III_\lambda$, $\lambda\neq 1$},\\
\newblock{\it  Ann. of Math. 2nd Ser,} {\bf 104}, no.1 {\rm (1976)}, 73-115.

\bibitem[Co4]{Co4} A. Connes, \newblock{On the cohomology of operator algebras},
\newblock{\it  J. Funct. Anal.} {\bf 28} {\rm (1978)}, no. 2, 248-253.

\bibitem[Co5]{Co5} A. Connes, \newblock{``Noncommutative Geometry''},
\newblock{Academic Press, 1994}.

\bibitem[CJ]{CJ} A. Connes, V. Jones, \newblock{Property T for von Neumann algebras},
\newblock{\it  Bull. London Math. Soc.} {\bf 17} {\rm (1985)},  57-62.


\bibitem[D1]{D1} E.B. Davies, (1976), \newblock{``Quantum Theory of Open Systems''},
171 pages, \newblock{Academic Press, London U.K.}.

\bibitem[D2]{D2} E.B. Davies, (1976), \newblock{``Heat kernels and spectral theory''},
\newblock{Cambridge Tracts in Mathematics} {\bf 92}, Cambridge University Press,, Cambridge U.K., 1990. x+197 pp

\bibitem[D3]{D3} E.B. Davies, \newblock{Analysis on graphs and noncommutative geometry},
\newblock{\it J. Funct. Anal.} {\bf 111} {\rm (1993)}, 398-430.

\bibitem[DL1]{DL1} E.B. Davies, J.M. Lindsay, \newblock{Non--commutative symmetric Markov semigroups},
\newblock{\it Math. Z.} {\bf 210} {\rm (1992)}, 379-411.

\bibitem[DL2]{DL2} E.B. Davies, J.M. Lindsay, \newblock{Superderivations and symmetric Markov semigroups},
\newblock{\it Comm. Math. Phys.} {\bf 157} {\rm (1993)}, 359-370.

\bibitem[DR1]{DR1} E.B. Davies, O.S. Rothaus, \newblock{Markov semigroups on C$^*$--bundles},
\newblock{\it J. Funct. Anal.} {\bf 85} {\rm (1989)}, 264-286.

\bibitem[DR2]{DR2} E.B. Davies, O.S. Rothaus, \newblock{A BLW inequality for vector bundles and applications to spectral bounds},
\newblock{\it J. Funct. Anal.} {\bf 86} {\rm (1989)}, 390-410.

\bibitem[DFSW]{DFSW} M. Daws, P. Fima, A. Skalski, S. White, \newblock{The Haagerup property for locally compact quantum groups}
\newblock{\it J. reine angew. Math.} {\bf 711} {\rm (2016)}, 189-229.

\bibitem[Del]{De} G.F. Dell'Antonio, \newblock{Structure of the algebras of some free systems},
\newblock{\it Comm. Math. Phys.} {\bf 9} {\rm (1968)}, 81-117.

\bibitem[Den]{De} J. Deny, \newblock{Methodes hilbertien en theorie du potentiel},
\newblock{\it Potential Theory (C.I.M.E., I Ciclo, Stresa)}, \newblock{Ed. Cremonese Roma, 1970}. {\bf 85}, 121-201.


\bibitem[Dix1]{Dix1} J. Dixmier, \newblock{``Les C$^*$--alg\`ebres et leurs repr\'esentations''},
\newblock{Gauthier--Villars, Paris, 1969}.

\bibitem[Dix2]{Dix2} J. Dixmier, \newblock{``Les alg\'ebres d'operateurs dans les espaces hilbertienne (alg\'ebres de von Neumann)''},
\newblock{Gauthier--Villars, Paris, 1969}.

\bibitem[E]{E} G. Elliott, \newblock{On the convergence of a sequence of completely positive maps to the identity},
\newblock{\it J. Austral. Math. Soc. Ser. A} {\bf 68} {\rm (2000)}, 340-348.

\bibitem[F]{F} P. Fima, \newblock{Kazhdan's property T for discrete quantum groups},
\newblock{\it Internat. J. Math.} {\bf 21} {\rm (2010)}, no. 1, 47-65.

\bibitem[FOT]{FOT} M. Fukushima, Y. Oshima, M. Takeda,\newblock{``Dirichlet Forms and Symmetric Markov Processes''},
\newblock{de Gruyter Studies in Mathematics, 1994}.

\bibitem[GL1]{GL1} S. Goldstein, J.M. Lindsay, \newblock{Beurling--Deny conditions for KMS--symmetric dynamical semigroups},
\newblock{\it C. R. Acad. Sci. Paris, Ser. I} {\bf 317} {\rm (1993)}, 1053-1057.

\bibitem[GL2]{GL2} S. Goldstein, J.M. Lindsay, \newblock{KMS--symmetric Markov semigroups},\\
\newblock{\it Math. Z.} {\bf 219} {\rm (1995)},
591-608.

\bibitem[GL3]{GL3} S. Goldstein, J.M. Lindsay, \newblock{Markov semigroup KMS--symmetric for a weight},\\
\newblock{\it Math. Ann.} {\bf 313} {\rm (1999)}, 39-67.

\bibitem[GS]{GS} D.Goswami, K.B. Sinha, \newblock{``Quantum stochastic processes and noncommutative geometry''},
Cambridge Tracts in Mathematics 169, 290 pages, \newblock{Cambridge University press, 2007}.

\bibitem[G1]{G1} L. Gross, \newblock{Existence and uniqueness of physical ground states},
\newblock{\it J. Funct. Anal.} {\bf 10} {\rm (1972)}, 59-109.

\bibitem[G2]{G2} L. Gross, \newblock{Hypercontractivity and logarithmic Sobolev inequalities for the Clifford--Dirichlet form},
\newblock{\it Duke Math. J.} {\bf 42} {\rm (1975)}, 383-396.

\bibitem[GIS]{GIS} D. Guido, T. Isola, S. Scarlatti,
\newblock{Non--symmetric Dirichlet forms on semifinite von Neumann algebras},
\newblock{\it J. Funct. Anal.} {\bf 135} {\rm (1996)}, 50-75.

\bibitem[H1]{H1} U. Haagerup, \newblock{Standard forms of von Neumann algebras},
\newblock{\it Math. Scand.} {\bf 37} {\rm (1975)}, 271-283.

\bibitem[H2]{H2} U. Haagerup, \newblock{L$^p$-spaces associated with an arbitrary von Neumann algebra},
\newblock{\it Algebre d'op\'erateurs et leur application en Physique Mathematique. Colloques Internationux du CNRS
{\bf 274}, Ed. du CNRS, Paris 1979 }, 175-184.

\bibitem[H3]{H3} U. Haagerup, \newblock{All nuclear $C\sp{*} $-algebras are amenable},
\newblock{\it Invent. Math.} {\bf 74} {\rm (1983)}, no. 2, 305-319.

\bibitem[HHW]{HHW} R. Haag, N.M. Hugenoltz, M. Winnink, \newblock{On the equilibrium states in quantum statistical mechanics},
\newblock{\it Comm. Math. Phys.} {\bf 5} {\rm (1967)}, 215-236.

\bibitem[J]{J} P. Jolissant, \newblock{Haagerup approximation property for finite von Neumann algebras},
\newblock{\it  J. Operator Theory} (4) {\bf 48} no. 3 {\rm (2002)}, 549-571.

\bibitem[KP]{KP} C.K. Ko, Y.M. Park, \newblock{Construction of a family of quantum Ornstein-Uhlenbeck semigroups}
\newblock{\it J. Math. Phys.} {\bf 45}, {\rm (2004)}, 609--627.

\bibitem[Kub]{Kub} R. Kubo, \newblock{Statistical-mechanical theory of irreversible processes. I. General theory and simple applications to magnetic and conduction problems}, \newblock{\it J. Phys. Soc. Japan} {\bf 12} {\rm (1957)}, 570-586.

\bibitem[KV]{KV} J. Kustermans, S. Vaes, \newblock{Locally compact quantum groups}, \newblock{\it Ann. Sci. Ecole Norm. Sup.} (4) {\bf 33} no. 9 {\rm (2000)},
837-934.
\bibitem[LM]{LM} H.B. Lawson  JR., M.-L. Michelson, \newblock{``Spin Geometry''},
\newblock{Princeton University Press, Princeton New Jersey, 1989}.

\bibitem[LJ]{LJ} Y, Le Jan, \newblock{Mesures associ\'es a une forme de Dirichlet. Applications.},
\newblock{\it Bull. Soc. Math. France} {\bf 106} {\rm (1978)}, 61-112.

\bibitem[Lin]{Lin} G, Lindblad, \newblock{On the generators of quantum dynamical semigroups},
\newblock{\it Comm. Math. Phys.} {\bf 48} {\rm (1976)}, 119-130.

\bibitem[LR]{LR} E.H. Lieb, D.W. Robinson, \newblock{The finite group velocity of quantum spin systems},
\newblock{\it Comm. Math. Phys.} {\bf 28} {\rm (1972)}, 251-257.

\bibitem[MZ1]{MZ1} A. Majewski, B. Zegarlinski, \newblock{Quantum stochastic dynamics. I. Spin systems on a lattice},
\newblock{\it Math. Phys. Electron. J.} {\bf 1} {\rm (1995)}, Paper 2, 1-37.

\bibitem[MZ2]{MZ2} A. Majewski, B. Zegarlinski, \newblock{On Quantum stochastic dynamics on noncommutative $L_p$--spaces},
\newblock{\it Lett. Math. Phys.} {\bf 36} {\rm (1996)}, 337-349.

\bibitem[MOZ]{MOZ} A. Majewski, R. Olkiewicz, B. Zegarlinski, \newblock{Dissipative dynamics for quantum spin systems on a lattice},
\newblock{\it J. Phys. A} {\bf 31} {\rm (1998)}, no. 8, 2045-2056.

\bibitem[Mat1]{Mat1} T. Matsui, \newblock{Markov semigroups on UHF algebras},
\newblock{\it Rev. Math. Phys.} {\bf 5} {\rm (1993)}, no. 3, 587-600.

\bibitem[Mat2]{Mat2} T. Matsui, \newblock{Markov semigroups which describe the time evolution of some higher spin quantum models},
\newblock{\it J. Funct. Anal.} {\bf 116} {\rm (1993)}, no. 1, 179-198.

\bibitem[Mat3]{Mat3} T. Matsui, \newblock{Quantum statistical mechanics and Feller semigroups},
\newblock{\it Quantum probability communications QP-PQ X} {\bf 31} {\rm (1998)}, 101-123.

\bibitem[MvN]{MvN} F.J. Murray, J. von Neumann, \newblock{On rings of operators},
\newblock{\it Ann. Math.} {\bf 37} {\rm (1936)}, 116-229.  \newblock{On rings of operators II},
\newblock{\it Trans. Amer. Math. Soc.} {\bf 41} {\rm (1937)}, 208-248. \newblock{On rings of operators IV},
\newblock{\it Ann. Math.} {\bf 44} {\rm (1943)}, 716-808.


\bibitem[Ne]{Ne} E. Nelson, \newblock{Notes on non-commutative integration},
\newblock{\it Ann. Math.} {\bf 15} {\rm (1974)}, 103-116.


\bibitem[P1]{P1} Y.M. Park, \newblock{Construction of Dirichlet forms
on standard forms of von Neumann algebras}
\newblock{\it Infinite Dim. Anal., Quantum. Prob. and Related Topics}
{\bf 3} {\rm (2000)}, 1-14.

\bibitem[P2]{P2} Y.M. Park, \newblock{Ergodic property of Markovian semigroups on standard forms of von Neumann algebras} \newblock{\it J. Math. Phys.} {\bf 46} {\rm (2005)}, 113507.

\bibitem[P3]{P3} Y.M. Park, \newblock{Remarks on the structure of Dirichlet forms on standard forms of von Neumann algebras} \newblock{\it Infin. Dimens. Anal. Quantum Probab. Relat.} {\bf 8} no.2 {\rm (2005)}, 179-197.

\bibitem[Ped]{Ped} G. Pedersen, \newblock{``C$^*$-algebras and their authomorphisms groups''},
\newblock{London Mathematical Society Monographs, {\bf 14}. Academic Press, Inc., London-New York, 1979}.

\bibitem[Po]{Po} S. Popa, \newblock{Correspondences.},
\newblock{\it Preprint INCREST} {\bf 56} {\rm (1986)},\\
available at www.math.ucla.edu/popa/preprints.html.


\bibitem[S1]{S1} J.-L. Sauvageot, \newblock{Tangent bimodule and locality for dissipative operators on C$^*$--algebras, Quantum Probability and Applications IV}, \newblock{\it Lecture Notes in Math.} {\bf 1396} {\rm (1989)}, 322-338.

\bibitem[S2]{S2} J.-L. Sauvageot, \newblock{Quantum Dirichlet forms, differential calculus and semigroups,  Quantum Probability and Applications V},
\newblock{\it Lecture Notes in Math.} {\bf 1442} {\rm (1990)}, 334-346.

\bibitem[S3]{S3} J.-L. Sauvageot, \newblock{Semi--groupe de la chaleur transverse sur la C$^*$--alg\`ebre d'un feulleitage riemannien},
\newblock{\it C.R. Acad. Sci. Paris S\'er. I Math.} {\bf 310} {\rm (1990)}, 531-536.

\bibitem[S4]{S4} J.-L. Sauvageot, \newblock{Le probleme de Dirichlet dans les C$^*$--alg\`ebres},
\newblock{\it J. Funct. Anal.} {\bf 101} {\rm (1991)}, 50-73.

\bibitem[S5]{S5} J.-L. Sauvageot, \newblock{From classical geometry to quantum stochastic flows: an example},
\newblock{\it Quantum probability and related topics, QP-PQ, VII}, 299-315, \newblock{World Sci. Publ., River Edge, NJ, 1992}.

\bibitem[S6]{S6} J.-L. Sauvageot, \newblock{Semi--groupe de la chaleur transverse sur la C$^*$--alg\`ebre d'un feulleitage riemannien},
\newblock{\it J. Funct. Anal.} {\bf 142} {\rm (1996)}, 511-538.

\bibitem[S7]{S7} J.-L. Sauvageot, \newblock{Strong Feller semigroups on C$^*$-algebras},
\newblock{\it J. Op. Th.} {\bf 42} {\rm (1999)}, 83-102.


\bibitem[Se]{Se} I.E. Segal, \newblock{A non-commutative extension of abstract integration},
\newblock{\it Ann. of Math.} {\bf 57} {\rm (1953)}, 401-457.

\bibitem[Sil1]{Sil1} M. Silverstein, \newblock{``Symmetric Markov Processes''},
\newblock{Lecture Notes in Math.} {\bf 426} {\rm (1974)}.

\bibitem[Sil2]{Sil2} M. Silverstein, \newblock{``Boundary Theory for Symmetric Markov Processes''},
\newblock{Lecture Notes in Math.} {\bf 516} {\rm (1976)}.

\bibitem[SU]{SU} R. Schrader, D. A. Uhlenbrock, \newblock{Markov structures on Clifford algebras},
\newblock{\it Jour. Funct. Anal.} {\bf 18} {\rm (1975)}, 369-413.

\bibitem[SV]{SV} A. Skalski, A. Viselter, \newblock{Convolution semigroups on locally compact quantum groups and noncommutative Dirichlet forms},
\newblock{\it J. Math. Pures Appl.} {\bf 124} {\rm (2019)}, 59-105.

\bibitem[Sti]{Sti} W.F. Stinespring, \newblock{Positive functions on C$^*$-algebras},
\newblock{\it Proc. Amer. Math. Soc.} {\bf 6} {\rm (1975)}, 211-216.


\bibitem[T1]{T1} M. Takesaki, \newblock{``Structure of factors and automorphism groups''},
\newblock{CBMS Regional Conference Series in Mathematics, American Mathematical Society, Providence, R.I.} {\bf 51} {\rm (1983)}.

\bibitem[T2]{T2} M. Takesaki, \newblock{``Theory of Operator Algebras I''},
Encyclopedia of Mathematical Sciences 124, 415 pages, \newblock{Springer-Verlag, Berlin, Heidelberg, New York, 2000}.

\bibitem[T3]{T3} M. Takesaki, \newblock{``Theory of Operator Algebras II''},
Encyclopedia of Mathematical Sciences 125, 518 pages, \newblock{Springer-Verlag, Berlin, Heidelberg, New York, 2003}.

\bibitem[V]{V} D.V. Voiculescu, \newblock{``Lectures on free probability theory},
\newblock{Lecture Notes in Math. 1738} {\bf (2000)}, 279-349.


\bibitem[W]{W} S.L. Woronowicz, \newblock{Compact quantum groups},
\newblock{\it Symmetries quantiques (Les Houches, 1995)} \newblock{North-Holland Amsterdam 1998}, 845-884.

\end{enumerate}
\end{document}